\numberwithin{equation}{section}
\newtheorem{thm}{Theorem}[section]
\newtheorem{conj}[thm]{Conjecture}
\newtheorem{lem}[thm]{Lemma}
\newtheorem{defn}[thm]{Definition}
\newtheorem{prop}[thm]{Proposition}
\newtheorem{rem}[thm]{Remark}
\newtheorem{example}[thm]{Example}
\theoremstyle{remark}
\newcommand\inv{\operatorname{inv}}
\newcommand\SYT{\operatorname{SYT}}
\newcommand\SSYT{\operatorname{SSYT}}
\newcommand\LLT{\operatorname{LLT}}
\newcommand\bdnu{\boldsymbol{\nu}}
\newcommand\bx{{\bf x}}
\newcommand\bdT{\boldsymbol{T}}
\definecolor{pink1}{RGB}{219, 48, 122}
\newcommand{\la}{\lambda}
\definecolor{dredcolor}{rgb}{0.9,0.3,0.4}
\begin{document}

\title[Chromaric quasisymmetric functions and LLT polynomials]{Melting lollipop chromatic quasisymmetric functions\\ and Schur expansion of unicellular LLT polynomials}

\author{JiSun Huh}
\address{Department of Mathematics, Ajou University, Suwon 16499 Republic of Korea}
\email{hyunyjia@ajou.ac.kr}

\author{Sun-Young Nam}
\address{Department of Mathematics, Sogang University, Seoul 04107, Republic of Korea}
\email{synam.math@gmail.com}

\author{Meesue Yoo}
\address{
Applied Algebra and Optimization Research Center, Sungkyunkwan University, Suwon 16420,
Republic of Korea}
\email{meesue.yoo@skku.edu (\Letter)}

\keywords{chromatic quasisymmetric function, $(3+1)$-free poset, lollipop graph, natural unit interval order, LLT polynomials, $e$-positivity, $e$-unimodality}
\subjclass[2010]{Primary 05E05; Secondary 05C15, 05C25 }

\thanks{
The first author was supported by NRF grant \#2015R1D1A1A01057476.
The second author was supported by NRF grant \#2017R1D1A1B03030945. 
The third author was supported by NRF grants \#2016R1A5A1008055 and \#2017R1C1B2005653.}

\begin{abstract} 
In this work, we generalize and utilize the linear relations of LLT polynomials introduced by Lee \cite{Lee}. By using the fact that the chromatic quasisymmetric functions 
and the unicellular LLT polynomials are related via plethystic substitution and thus they satisfy the same linear relations, we can apply the linear relations to both sets of functions. 

As a result, in the chromatic quasisymmetric function side, we find a class of $e$-positive graphs, called \emph{melting lollipop graphs}, and explicitly prove the $e$-unimodality. In the unicellular LLT side, we obtain Schur expansion formulas for LLT polynomials corresponding to certain set of graphs, namely, complete graphs, path graphs, lollipop graphs and melting lollipop graphs. 
\end{abstract}

\maketitle

\tableofcontents


\section{Introduction}

Shareshian and Wachs \cite{SW} introduced the \emph{chromatic quasisymmetric function} as a refinement of Stanley's chromatic symmetric function introduced in \cite{S1} by considering an extra parameter $q$. Recall that a \emph{proper coloring} of a simple graph $G=(V,E)$ is any function $\kappa : V \rightarrow \{1,2,3,\dots\}$
satisfying that $\kappa(u)\neq \kappa(v)$ for any $u, v\in V$ such that $\{u,v\}\in E$.  For a simple graph $G$ with a vertex set $V$ of positive integers and  a proper coloring $\kappa$,  we denote the number of edges $\{i,j\}$ of $G$ with $i<j$ and $\kappa(i)<\kappa(j)$ by ${\rm asc}(\kappa)$. Given  a sequence $\bx=(x_1,x_2,\dots)$ of commuting indeterminates, the chromatic quasisymmetric function of $G$ is defined as
\begin{equation}\label{eqn:quasichro}
X_G({\bf x};q)=\sum_{\kappa}q^{\rm{asc}(\kappa)} x^{\kappa},
\end{equation}
where the sum is over all proper colorings $\kappa$. The function $X_G(\bx;1)$ is called the \emph{chromatic symmetric function} and denoted by $X_G(\bx)$.

Shareshian and Wachs showed that if $G$ is the incomparability graph of a natural unit interval order, then the coefficients of $q^i$ in $X_G({\bf x};q)$ are symmetric functions and form a palindromic sequence. They also made a conjecture (Conjecture \ref{conj:SW}) on the $e$-positivity and the $e$-unimodality of $X_G({\bf x};q)$, which specializes to the famous $e$-positivity conjecture on the chromatic symmetric functions of Stanley and Stembridge \cite{S1,SS}. In \cite{Guay}, Guay-Paquet proved that if Conjecture \ref{conj:SW} holds, then Stanley and Stembridge's conjecture also holds. This result has put a spotlight on the incomparability graphs of natural unit interval orders. As a result, the $e$-positivity (and $e$-unimodality) of several subclasses of the incomparability graphs of natural unit interval orders are proved, see \cite{AP,CH,Dah,DW,FHM,HP,SW}. 

On the other hand, if we remove the proper condition of the colorings, the sum \eqref{eqn:quasichro} over all colorings $\kappa$ gives \emph{the unicellular LLT polynomials}. The LLT polynomials are defined by Lascoux, Leclerc and Thibon in \cite{LLT} 
which can be considered as a $q$-deformation of a product of Schur functions. The LLT polynomials are indexed by a tuple of skew diagrams, but especially when each skew diagram consists of only one cell, then the corresponding LLT polynomial is called \emph{unicellular}. In \cite{GH}, Grojnowski and Haiman proved the Schur positivity of LLT polynomials using the Kazhdan--Lusztig theory, but there is no known combinatorial description for the Schur coefficients, except some special cases. In a sense that the combinatorial description for the Schur coefficients of LLT polynomials could give a combinatorial description of the $q,t$-Kostka polynomials which are the Schur coefficients of the modified Macdonald polynomials,
having a combinatorial formula for the Schur expansion of the LLT polynomials is very important. In the case of the unicellular LLT polynomials, due to the correspondence between the unicellular LLT diagrams and the Dyck diagrams (cf. \cite{AP}, \cite{Lee}), abundant connections to other branches of mathematics such as Hopf algebra and Hessenberg varieties have been figured out. 


The precise relationship between chromatic quasisymmetric functions and the unicellular LLT polynomials is given by Carlsson and Mellit \cite{CM} via plethystic substitution. This relationship explains the parallel phenomena between the chromatic quasisymmetric functions and the unicellular LLT polynomials. In particular, the two sets of polynomials satisfy the same linear relations.

Recently,
Lee in \cite{Lee} introduced local linear relations on LLT polynomials and used them to prove the $k$-Schur positivity of the LLT polynomials when $k=2$. In this work, we generalize and utilize these local linear relations.
In chromatic quasisymmetric function side, by using these linear relations, we find a class of $e$-positive graphs, called \emph{melting lollipop graphs}. In unicellular LLT side, we prove Schur expansion formulas for LLT diagrams corresponding to certain set of graphs. 

The contents of the paper is organized as follows. 
In Section \ref{sec:pre} we provide necessary definitions and known results used throughout the paper. 
In Section \ref{sec:local} we generalize and utilize those local linear relations. 
By using these linear relations, in section \ref{sec:lollipopG}
we explain $e$-positivity and $e$-unimodality of the chromatic quasisymmetric functions corresponding to lollipop graphs.
And, we find a new class of 
$e$-positive and $e$-unimodal graphs, called melting lollipop graphs.
Thereafter we obtain a combinatorial interpretation for Schur expansion of unicellular LLT polynomials corresponding to melting lollipop graphs in section \ref{sec:LLT_Schur}.
To do this, 
we first obtain Schur expansion formulas for unicellular LLT polynomials corresponding to certain set of basic graphs including complete graphs and path graphs.
In the final section, as an aside, 
we introduce a  combinatorial  way  to  compute  the  Schur  coefficients  of  LLT polynomials when the Schur functions are indexed by hook shapes.




\section{Preliminary}\label{sec:pre}


In this section
we collect definitions and notions which are required to develop our arguments.
More details can be found in \cite{AP,HHL05, SW}.


\subsection{Semistadard Young tableaux and Schur functions}


A \emph{partition} of $n$ is a nonincreasing sequence $\la=(\la_1, \la_2, \dots, \la_\ell)$ of positive integers such that $\sum_i \la_i =n$ and we use the notation $\la\vdash n$ to denote that $\la$ is a partition of $n$. Each $\lambda_i>0$ is called a \emph{part} of $\lambda$, and we define the \emph{length} $\ell (\lambda)$ of $\lambda$ 
to be the number of parts in $\lambda$. 

When two partitions $\lambda$ and $\mu$ satisfy $\mu_i \leq \lambda_i$ for all $i \geq 1$, 
we write $\mu \subseteq \lambda$,
and define  \emph{skew shape} 
$\lambda/\mu$ to be the set theoretic difference $\lambda\setminus \mu$.
The \emph{diagram} of $\lambda/\mu$ is defined to be the set $\{ (i,j)~ :~ 1\le i\le \ell(\lambda), ~ \mu_i < j\le \lambda_i\}$.
Throughout this paper,
we frequently identify skew shape $\lambda/\mu$ with its diagram,
and the elements in the diagram of $\lambda/\mu$ are visualized as boxes in a plane.


\begin{figure}[ht]
\begin{tikzpicture}[scale=.48]
\draw[thick] (0,4)--(2,4)--(2,3)--(4,3)--(4,1)--(5,1)--(5,0)--(4,0)--(4,1)--(2,1)--(2,2)--(1,2)--(1,3)--(0,3)--(0,4)--cycle;
\draw[thick] (1,3)--(1,4);
\draw[thick] (2,2)--(2,3);
\draw[thick] (3,1)--(3,3);
\draw[thick] (1,3)--(2,3);
\draw[thick] (2,2)--(4,2);
\draw[dashed] (0,0)--(0,3);
\draw[dashed] (0,0)--(4,0);
\draw[dashed] (0,2)--(1,2);
\draw[dashed] (0,1)--(2,1);
\draw[dashed] (1,0)--(1,2);
\draw [dashed] (2,0)--(2,1);
\draw [dashed] (3,0)--(3,1);
\end{tikzpicture}
\caption{The skew shape $(5,4,4,2)/(4,2,1)$.}
\end{figure}
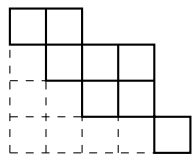
Let $\nu$ be a skew shape $\lambda/\mu$. The \emph{content} of a cell $u=(i,j)$ in $\nu$ is the integer $c(u)=i-j$. 
A \emph{(semistandard) Young tableau} of shape $\nu$ is a filling of $\nu$ with letters from $\mathbb{Z}_+$
such that 
the entries are weakly increasing on each row and strictly increasing on each column. 
For a Young tableau $T$,
we define the \emph{weight} $\text{wt}(T)$ of $T$ to be the sequence $(\text{wt}_1(T), \text{wt}_2(T), \cdots, )$,
where $\text{wt}_i(T)$ denotes the number of occurrences of $i$ in $T$.
Especially,
$T$ is called \emph{standard} if its weight is $(1,1, \cdots, 1)$.
We denote the set of Young tableaux of shape $\nu$ by $\SSYT (\nu)$ and
the set of standard Young tableaux of shape $\nu$ by $\SYT(\nu)$.
Given $T\in \SSYT(\nu)$, define its corresponding monomial 
$x^T = \prod_{u\in\nu} x_{T(u)}.$
Then the \emph{Schur functions} are defined as follows:
\[
s_\lambda = \sum_{T \in \SSYT(\lambda)} {x}^T \, .
\]
For partitions $\lambda, \mu$ and  $\nu$,
the \emph{Littlewood-Richardson (LR) coefficients} $c_{\mu}^{\nu/\lambda}$ are the structure constants
appearing in the Schur expansion of the product of two Schur functions $s_\lambda$ and $s_\mu$,
that is, 
\[
s_\lambda \cdot s_\mu = \sum_\nu c_\mu^{\nu/\lambda}s_\nu \, .
\]
It is a well-established fact that
the LR coefficients $c_\mu^{\nu/\lambda}$ are nonnegative integers
and their nonnegativity can be interpreted in a combinatorial way 
by virtue of
\emph{Sch\"{u}zenberge's jeu de taquin} sliding process.
Note that this sliding process is a combinatorial algorithm taking a Young tableau to another Young tableau with the same weight, 
but different shape, that can be carried out by applying the slide described in Figure \ref{jeu de taquin} in succession.
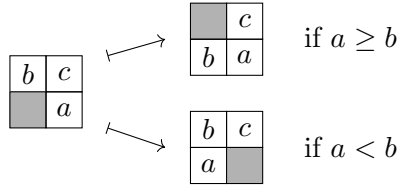
\begin{figure}[h!]
\centering
\begin{tikzpicture}[baseline=0pt]
\def \hhh{4.8mm}      
\def \vvv{4.8mm}      
\def \vvvv{7mm}   
\def \vvvvv{-4mm}    
\draw[-] (0,0-\vvvv) rectangle (\hhh*1,\vvv*1-\vvvv);
\draw[-] (\hhh*1,0-\vvvv) rectangle (\hhh*2,\vvv*1-\vvvv);
\draw[fill=black!30] (\hhh*0,-\vvv*1-\vvvv) rectangle (\hhh*1,0-\vvvv);
\draw[-] (\hhh*1,-\vvv*1-\vvvv) rectangle (\hhh*2,0-\vvvv);
\draw[|->] (\hhh*2.7,\vvv*1-\vvvv) to (\hhh*4.3,0);
\draw[|->] (\hhh*2.7,-\vvv*1-\vvvv) to (\hhh*4.3,-\vvv*3);
\draw[-] (\hhh*6,0) rectangle (\hhh*7,\vvv*1);
\draw[fill=black!30] (\hhh*0+\hhh*5,0) rectangle (\hhh*1+\hhh*5,\vvv*1);
\draw[-] (\hhh*0+\hhh*5,-\vvv*1) rectangle (\hhh*1+\hhh*5,0);
\draw[-] (\hhh*1+\hhh*5,-\vvv*1) rectangle (\hhh*2+\hhh*5,0);
\draw[-] (\hhh*0+\hhh*5,0-\vvv*3) rectangle (\hhh*6,\vvv*1-\vvv*3);
\draw[-] (\hhh*0+\hhh*6,0-\vvv*3) rectangle (\hhh*7,\vvv*1-\vvv*3);
\draw[-] (\hhh*5,-\vvv*1-\vvv*3) rectangle (\hhh*6,0-\vvv*3);
\draw[fill=black!30] (\hhh*6,-\vvv*1-\vvv*3) rectangle (\hhh*7,-\vvv*3);
\node (A) at (\hhh*5.5,0) {};
\node () [right=of A]
{if $a \geq b$};
\node at (\hhh*5.5,-\vvv*3) (A21) {};
\node () [right=of A21]
{if $a < b$};
\node[] at (\hhh*0.5,\vvv*0.5-\vvvv) () {$b$};
\node[] at (\hhh*1.5,\vvv*0.5-\vvvv) () {$c$};
\node[] at (\hhh*1.5,-\vvv*0.5-\vvvv) () {$a$};
\node[] at (\hhh*1.5+\hhh*5,\vvv*0.5) () {$c$};
\node[] at (\hhh*0.5+\hhh*5,-\vvv*0.5) () {$b$};
\node[] at (\hhh*1.5+\hhh*5,-\vvv*0.5) () {$a$};
\node[] at (\hhh*0.5+\hhh*5,-\vvv*2.5) () {$b$};
\node[] at (\hhh*1.5+\hhh*5,-\vvv*2.5) () {$c$};
\node[] at (\hhh*0.5+\hhh*5,-\vvv*3.5) () {$a$};
\end{tikzpicture}
\caption{The jeu de taquin slide} \label{jeu de taquin}
\end{figure}
Given a Young tableau $T$ of skew shape, 
one can apply jeu de taquin slides to $T$ iteratively to get a Young tableau of partition shape, 
which is called the \emph{rectification} of $T$ and denoted by $\text{Rect}(T)$.
Let $ R_\lambda$ be the standard Young tableau of shape $\lambda$,
called the row tableau of shape $\lambda$, 
whose $i$th row consists of the entries 
$\lambda_0 + \cdots + \lambda_{i-1} + 1, \, \lambda_0 + \cdots + \lambda_{i-1} + 2, \,
\cdots , \, \lambda_0 + \cdots + \lambda_{i-1} + \lambda_i $ from left to right for every
$1 \leq i \leq \ell$. 
Here $\lambda_0$ is set to be 0.
Then the LR coefficient
$c_\mu^{\nu/\lambda}$ equals the number of standard Young tableaux $T$ of shape $\nu/\lambda$ a
satisfying that $\text{Rect}(T) = R_\mu$.

\subsection{Natural unit interval orders} \label{sec:nui}

For a positive integer $n$, we set $[n]=\{1,2,\dots,n\}$ and $[n]_q=1+q+\cdots+q^{n-1}$. Let $P$ be a poset. The \emph{incomparability graph} ${\rm inc}(P)$ of $P$ is a graph which has as vertices the elements of $P$, with edges connecting pairs of incomparable elements.  
For a list ${\bf m}:=(m_1, m_2, \dots, m_{n-1})$ of nondecreasing positive integers satisfying that $i\leq m_i \leq n$ for each $i\in[n-1]$, the corresponding \emph{natural unit interval order} $P(\bf{m})$ is the poset on $[n]$ with the order relation given by $i<_{P(\bf{m})} j$ if $i<n$ and $j\in \{m_i +1, m_i +2, \dots, n\}$. 
For example, Figure \ref{fig:lollipop} shows the graph ${\rm inc}(P)$ for the natural unit interval order $P=P(2,3,4,5,6,11,11,11,11,11)$.

\begin{figure}[ht]
\begin{tikzpicture}[scale=.18]
\foreach \n in {6}
 \foreach \i in {1,...,\n}
    \fill (\i*360/\n:\n) coordinate (n\i) circle(2*\n pt)
      \ifnum \i>1 foreach \j in{\i,...,1}{(n\i)edge(n\j)}\fi;
\draw (180:6)--(-31,0);

\foreach \i in {1,...,5}
 \node[below] at (-36+5*\i,0) {\i}; 

\node[below] at (180:6) {6};
\node[below] at (240:6) {7};
\node[below] at (300:6) {8};
\node[right] at (0:6) {9};
\node[above] at (60:6) {10};
\node[above] at (120:6) {11};

\filldraw [black] (-11,0) circle (12pt)
			(-16,0) circle (12pt)
			(-21,0) circle (12pt)
			(-26,0) circle (12pt)
			(-31,0) circle (12pt);
\end{tikzpicture}
\caption{The incomparability graph $L_{6,5}$ of  $P=P(2,3,4,5,6,11,11,11,11,11)$.}\label{fig:lollipop}
\end{figure}
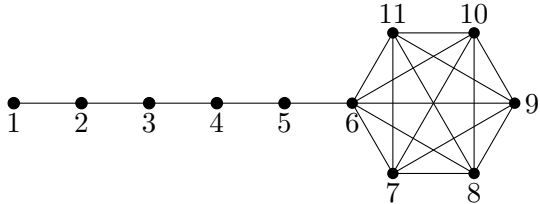

It is well-known that the set of \emph{elementary symmetric functions} $\{e_\la \,:\, \la\vdash n\}$ is a basis of the subspace $\Lambda^n$ of symmetric functions of degree $n$. 
 One says a symmetric function $f({{\bf x}})\in\Lambda^n $ is $b$-{\it positive} if the expansion of $f({\bf x})$ in the basis $\{b_{\la}\}$ has nonnegative coefficients when $\{b_\la\}$ is a basis of $\Lambda^n$. One of main motivation for our study is to understand the following conjecture.

\begin{conj}[Shareshian-Wachs \cite{SW}]\label{conj:SW}
For the incomparability graph $G$ of a natural unit interval order, if $X_G({\bf x};q)=\sum_{i=0}^{m}a_i({\bf x})q^i$ then $a_{i}({\bf x})$ is $e$-positive for all $i$, and $a_{i+1}({\bf x})-a_{i}({\bf x})$ is $e$-positive whenever $0\leq i < \frac{m-1}{2}$.
\end{conj}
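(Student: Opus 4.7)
My plan is to attempt a reduction of an arbitrary natural unit interval order to a controllable family of base graphs via the local linear relations developed in Section~\ref{sec:local}, transported between LLT polynomials and chromatic quasisymmetric functions through the Carlsson--Mellit plethystic substitution. The induction would be on a complexity statistic such as $\sum_i (m_i - i)$, with the goal of expressing $X_{\mathrm{inc}(P(\mathbf{m}))}(\bx; q)$ as a $\mathbb{Z}_{\geq 0}[q]$-combination of chromatic quasisymmetric functions of strictly simpler incomparability graphs, where the coefficient polynomials in $q$ are themselves palindromic and unimodal.

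For the base cases I would verify the conjecture directly for complete graphs, path graphs, lollipop graphs, and the melting lollipop graphs introduced in Section~\ref{sec:lollipopG}. For the complete graph one has $X_{K_n}(\bx;q) = [n]_q!\, e_n$, which is manifestly $e$-positive and palindromic unimodal in $q$, while for paths and lollipops the explicit $e$-expansions derived from the ascent formula, together with the Schur expansions obtained in Section~\ref{sec:LLT_Schur}, yield both parts of the conjecture.

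The inductive step would apply a local relation to obtain an identity of the form
\begin{equation*}
X_G(\bx; q) = \sum_j f_j(q)\, X_{G_j}(\bx; q),
\end{equation*}
in which each $G_j$ is strictly less complex than $G$ and each $f_j(q) \in \mathbb{Z}_{\geq 0}[q]$ is palindromic and unimodal about the correct center. The induction then closes because the product of a palindromic unimodal polynomial in $q$ with an $e$-positive, $e$-unimodal, palindromic symmetric-function coefficient sequence is again $e$-positive and $e$-unimodal, by the standard closure of palindromic unimodal sequences under such products.

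The principal obstacle---and the reason the Shareshian--Wachs conjecture remains open---is that no universal reduction of this form is presently known. The local relations available in Section~\ref{sec:local} can generate signed combinations or terminate at graphs whose $e$-positivity has not been independently established, which breaks the induction. A realistic target, and the one adopted in the body of the paper, is therefore to enlarge the class of base graphs for which explicit $e$-positive, $e$-unimodal expansions are available (melting lollipop graphs being the new such family), and to sharpen the local relations to those that demonstrably preserve positivity and unimodality, so that the conjecture can be established class by class even if not in one stroke.
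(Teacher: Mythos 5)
The statement you were asked to prove is Conjecture~\ref{conj:SW}, the Shareshian--Wachs $e$-positivity and $e$-unimodality conjecture. The paper does not prove it and offers no proof: it is stated as a conjecture, attributed to \cite{SW}, and remains open. Your proposal is candid about this, and to your credit the strategy you sketch --- transporting Lee's local linear relations between $\LLT_G$ and $X_G$ via the Carlsson--Mellit plethysm, inducting on a complexity statistic, and closing the induction using the fact that products of appropriately centered palindromic unimodal $q$-polynomials with $e$-positive, $e$-unimodal coefficient sequences remain $e$-positive and $e$-unimodal --- is precisely the mechanism the paper deploys in Sections~\ref{sec:local} and~\ref{sec:lollipopG} to handle lollipop and melting lollipop graphs (Propositions~\ref{prop:quasilollipop} and Theorem~\ref{thm:melting}). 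So as a description of the state of the art and of the paper's actual contribution, your write-up is accurate.

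But it is not a proof, and the gap is exactly where you locate it: the inductive step. Theorem~\ref{thm:local} applies only when the area sequence satisfies the rather restrictive conditions $a_{i-1}+\ell-1\le a_i$ and the staircase condition $a_{i+a_i-j}=a_{i+a_i-j+1}+1$; for a general natural unit interval order there is no known choice of relations that expresses $X_G(\bx;q)$ as a $\mathbb{Z}_{\ge 0}[q]$-combination of strictly simpler graphs all of whose chromatic quasisymmetric functions are known to be $e$-positive and $e$-unimodal. Absent such a universal reduction, the induction does not close, and the statement remains a conjecture. Since nothing in the paper (or in your proposal) supplies that missing reduction, the correct conclusion is that you have outlined a research program consistent with the paper's partial results, not a proof of the statement.
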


We denote the complete graph with $m$ vertices by $K_m$ and the path with $n$ vertices with natural labelling by $P_n$.
For each of $K_{m}$ and $P_{n}$, it is proved that the above conjecture is true.

\begin{prop}\cite[Table 1]{SW}\label{prop:basic}
Let $m$ and $n$ be nonnegative integers.
\begin{enumerate}
\item[(a)] The chromatic quasisymmetric function of a complete graph $K_m$ is 
$$X_{K_m}({\bf x};q)=[m]_q!e_m.$$
\item[(b)] The chromatic quasisymmetric function of a path $P_n$ is
$$X_{P_n}({\bf x};q)=\sum_{m=1}^{\lfloor \frac{n+1}{2} \rfloor} \sum_{\substack{k_1,\dots,k_m \geq 2\\ \sum k_i=n+1}} e_{(k_1-1,k_2,\dots,k_m)} q^{m-1} \prod_{i=1}^{m}[k_i-1]_q.$$
\end{enumerate}
\end{prop}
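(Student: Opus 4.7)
For part (a), a proper coloring of $K_m$ is precisely an injection $\kappa\colon[m]\to\mathbb{Z}_+$, so it is determined uniquely by its image $\{c_1<c_2<\cdots<c_m\}$ together with a permutation $\sigma\in S_m$ such that $\kappa(i)=c_{\sigma(i)}$. Under this parameterization, ${\rm asc}(\kappa)$ equals the number of pairs $i<j$ with $\sigma(i)<\sigma(j)$, i.e., the co-inversion count ${\rm coinv}(\sigma)$. The Mahonian identity $\sum_{\sigma\in S_m}q^{{\rm coinv}(\sigma)}=[m]_q!$ (equivalent via $\sigma\mapsto w_0\sigma$ to the standard $\sum_\sigma q^{\inv(\sigma)}=[m]_q!$) then factors the sum over all proper colorings:
$$X_{K_m}(\bx;q)=\Big(\sum_{c_1<\cdots<c_m}x_{c_1}\cdots x_{c_m}\Big)\sum_{\sigma\in S_m}q^{{\rm coinv}(\sigma)}=[m]_q!\,e_m.$$

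For part (b), my plan is to set up a weight-preserving bijection between proper colorings of $P_n$ and the combinatorial objects enumerated by the right-hand side. Expanding $e_k=\sum_{i_1<\cdots<i_k}x_{i_1}\cdots x_{i_k}$ and $[k-1]_q=1+q+\cdots+q^{k-2}$, each term of the RHS parameterizes a tuple consisting of $m$ strictly increasing color sequences of lengths $k_1-1,k_2,\ldots,k_m$ (summing to $n$), together with $m$ auxiliary indices $j_i\in\{0,1,\ldots,k_i-2\}$ and a global factor $q^{m-1}$; the total $q$-weight is $m-1+\sum_i j_i$. I would decompose a proper coloring $\kappa$ of $P_n$ into $m$ consecutive blocks of those lengths according to a natural structural feature such as positions of certain descents or local minima, and arrange that each block contributes its strictly increasing color sequence while $j_i$ records a refined intra-block statistic matching the $q$-weight.

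The main obstacle is pinning down the correct block decomposition: the asymmetry between the first block (length $k_1-1$) and the subsequent blocks (length $k_i$ for $i\ge 2$) signals that the structural feature defining blocks naturally assigns an extra position to each block after the first, perhaps a certain ``valley'' vertex shared between adjacent blocks. If the bijective picture proves too intricate, I would fall back on induction on $n$: remove an endpoint vertex, condition on the relation between the coloring of the removed vertex and its neighbor, and verify that the right-hand side satisfies the resulting two-term recurrence. This can be streamlined by passing to the generating function $\sum_n X_{P_n}(\bx;q)\,z^{n+1}$, which the formula expresses as a rational function of series of the form $\sum_k e_k[k-1]_q z^k$, so that matching the recurrence becomes a formal-series identity.
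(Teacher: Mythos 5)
The paper offers no proof of this proposition: it is quoted verbatim from Shareshian--Wachs \cite{SW}, so there is no internal argument to compare yours against. Your part (a) is correct and complete --- factoring a proper coloring of $K_m$ into an $m$-element color set and a permutation, identifying ${\rm asc}(\kappa)$ with the coinversion number, and invoking the Mahonian identity is exactly the standard argument.

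Part (b), however, is a plan rather than a proof, and both branches of the plan leave a genuine gap. In the bijective branch you never specify the block decomposition: identifying the structural feature that cuts a proper coloring of $P_n$ (equivalently, a Smirnov word) into pieces whose color sets are the parts of $(k_1-1,k_2,\dots,k_m)$ and whose internal statistic realizes $[k_i-1]_q$ is the entire content of the statement, and the observation about the asymmetric first part does not supply it. In the inductive branch, the recurrence you propose does not close: deleting the endpoint $n$ of $P_n$ gives
$$X_{P_n}(\bx;q)=\sum_{\kappa}x^{\kappa}\Bigl(\sum_{c<\kappa(n-1)}x_c+q\sum_{c>\kappa(n-1)}x_c\Bigr),$$
where $\kappa$ runs over proper colorings of $P_{n-1}$, and the right-hand side cannot be expressed through $X_{P_{n-1}}(\bx;q)$ alone because it depends on the color of the deleted vertex's neighbor; there is no ``two-term recurrence'' in the $X_{P_n}$ themselves. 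One must enrich the induction to track the last color (the transfer-matrix device behind the Carlitz--Scoville--Vaughan formula for Smirnov words, which is the route Shareshian and Wachs take), and then verify the generating-function identity
$$\sum_{n\ge 1}X_{P_n}(\bx;q)\,z^{n+1}=\frac{\sum_{k\ge2}[k-1]_q\, e_{k-1}\,z^{k}}{1-\sum_{k\ge2}q\,[k-1]_q\, e_k\, z^k}$$
against that refined recurrence; expanding the right-hand side geometrically does recover the stated sum over $(k_1,\dots,k_m)$. As written, part (b) is not established.
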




\subsection{LLT polynomials}

LLT polynomials are a family of symmetric functions introduced by Lascoux, Leclerc and Thibon in \cite{LLT} which naturally arise in the 
description of the power-sum plethysm operators on symmetric functions. The original definition of LLT polynomials uses \emph{cospin} statistic 
of ribbon tableaux, but Haiman and Bylund found a consistent statistic, called \emph{inv}, defined over $n$-tuples of semistandard Young tableaux of various skew shapes. 
Here, we use the inversion statistic of Haiman and Bylund to define the LLT polynomials used in this paper. For the proof of the consistency of two definitions, 
see \cite{HHLRU}.

We consider a $k$-tuple of skew diagrams $\bdnu = (\nu^{(1)},\dots, \nu^{(k)})$
and let 
$$\SSYT(\bdnu) = \SSYT(\nu^{(1)})\times \cdots \times\SSYT(\nu^{(k)}). $$
Given $\bdT\in (T^{(1)},\dots, T^{(k)})\in\SSYT (\bdnu)$, we set 
$$x^{\bdT}=\prod_{i=1}^k x^{T^{(i)}}.$$
We define the \emph{content reading order} by giving the ordering on the cells in $\bigsqcup\bdnu$, so that the left-bottom most cell has the 
smallest order and the order increases upward along diagonals, moving from the left to the right. Given $\bdT\in (T^{(1)},\dots, T^{(k)})\in\SSYT (\bdnu)$,
we obtain the \emph{content reading word} by reading the entries according to the content reading order. 
We say a pair of entries $T^{(i)}(u) > T^{(j)}(v)$ form an \emph{inversion} if either 
\begin{itemize}
\item[(i)] $i<j$ and $c(u)=c(v)$, or
\item[(ii)] $i> j$ and $c(u)=c(v)+1$.
\end{itemize}
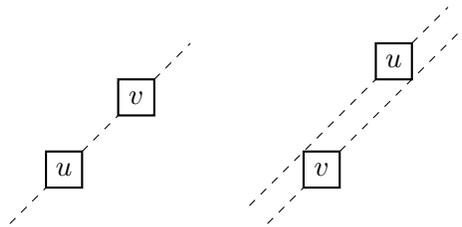
\begin{figure}[ht]
\begin{tikzpicture}[scale=.48]
\draw[thick] (0,0)--(1,0)--(1,1)--(0,1)--(0,0)--cycle;
\draw[thick] (2,2)--(3,2)--(3,3)--(2,3)--(2,2)--cycle;
\node (0) at (.5,.5) {$u$};
\node (1) at (2.5,2.5) {$v$};
\draw[dashed] (-1,-1)--(0,0);
\draw[dashed] (1,1)--(2,2);
\draw[dashed] (3,3)--(4,4);
\end{tikzpicture}\qquad
\begin{tikzpicture}[scale=.48]
\draw[thick] (0,0)--(1,0)--(1,1)--(0,1)--(0,0)--cycle;
\draw[thick] (2,3)--(3,3)--(3,4)--(2,4)--(2,3)--cycle;
\draw[dashed] (-1,-1)--(0,0);
\draw[dashed] (1,1)--(4.5,4.5);
\draw[dashed] (-1.5,-.5)--(2,3);
\draw[dashed] (3,4)--(4,5);
\node (0) at (.5,.5) {$v$};
\node (1) at (2.5,3.5) {$u$};
\end{tikzpicture}
\caption{The locations of cells $u$ and $v$ forming inversion pair of the cases ${\rm (i)}$ and ${\rm (ii)}$ from the left.}
\end{figure}
Let $\inv(\bdT)$ be the number of inversions in $\bdT$.

\begin{defn}
The LLT polynomial indexed by $\bdnu= (\nu^{(1)},\dots, \nu^{(k)})$ is 
$$\LLT_{\bdnu}({\bf x};q) = \sum_{\bdT\in\SSYT(\bdnu)}q^{\inv (\bdT)}  x^{\bdT}.$$
\end{defn}

%
\subsection{Relation between the LLT polynomials and chromatic quasisymmetric functions}\label{subsec:relation}

From now on, we only consider unicellular LLT polynomials, i.e., each $\nu^{(i)}$ consists of only one cell. 
In \cite{AP} and independently in \cite{Lee}, bijective correspondence between unicellular LLT diagrams and skew shapes contained in a staircase shape partition 
has been introduced. 
We explain it with an example. 

\begin{example}\label{ex:dg}
In Figure \ref{fig:example_nu}, the labelling in the left side figure denotes the content reading order of the given LLT diagram $\bdnu$ and we identify those numbers with the numbers in the main diagonal in the figure on the right hand side.

\begin{figure}[ht]
\begin{tikzpicture}[scale=.46]
\draw[thick] (1,2)--(2,2)--(2,3)--(1,3)--(1,2)--cycle;
\draw[thick] (3,3)--(4,3)--(4,4)--(3,4)--(3,3)--cycle;
\draw[thick] (5,4)--(6,4)--(6,5)--(5,5)--(5,4)--cycle;
\draw[thick] (6,7)--(7,7)--(7,8)--(6,8)--(6,7)--cycle;
\draw[thick] (8,8)--(9,8)--(9,9)--(8,9)--(8,8)--cycle;
\draw[thick] (10,9)--(11,9)--(11,10)--(10,10)--(10,9)--cycle;
\draw[thick] (11,11)--(11,12)--(12,12)--(12,11)--(11,11)--cycle;
\draw [dashed] (0.3,1.3)--(1,2);
\draw [dashed] (2,3)--(6,7);
\draw [dashed] (7,8)--(12.5,13.5);
\draw [dashed] (.8,.8)--(3,3);
\draw [dashed] (4,4)--(8,8);
\draw [dashed] (9,9)--(11,11);
\draw [dashed] (12,12)--(13,13);
\draw [dashed] (1.3,0.3)--(5,4);
\draw [dashed] (6,5)--(10,9);
\draw [dashed] (11,10)--(13.5,12.5);
\node (1) at (1.5, 2.5) {$1$};
\node (2) at (6.5, 7.5) {$2$};
\node (3) at (3.5, 3.5) {$3$};
\node (4) at (8.5, 8.5) {$4$};
\node (5) at (11.5, 11.5) {$5$};
\node (6) at (5.5, 4.5) {$6$};
\node (7) at (10.5, 9.5) {$7$};
\node (8) at (9.5, 4) {$\bdnu$};
\node (9) at (13, 6) {$\Longleftrightarrow$};
\end{tikzpicture}
\qquad
\begin{tikzpicture}[scale=.6]
\draw (0,0)--(0,7)--(7,7);
\foreach \i in {1,...,6}
\draw (0,\i)--(\i,\i);
\foreach \i in {1,...,6}
\draw (\i,7)--(\i,\i);
\node (1) at (.5, .5) {$1$};
\node (2) at (1.5, 1.5) {$2$};
\node (3) at (2.5, 2.5) {$3$};
\node (4) at (3.5, 3.5) {$4$};
\node (5) at (4.5, 4.5) {$5$};
\node (6) at (5.5, 5.5) {$6$};
\node (7) at (6.5, 6.5) {$7$};
\node (8) at (.5, 6.5) {$\mathsf{X}$};
\node (8) at (1.5, 6.5) {$\mathsf{X}$};
\node (8) at (2.5, 6.5) {$\mathsf{X}$};
\node (8) at (3.5, 6.5) {$\mathsf{X}$};
\node (8) at (.5, 5.5) {$\mathsf{X}$};
\node (8) at (1.5, 5.5) {$\mathsf{X}$};
\node (8) at (2.5, 5.5) {$\mathsf{X}$};
\node (8) at (.5, 4.5) {$\mathsf{X}$};
\node (8) at (1.5, 4.5) {$\mathsf{X}$};
\node (8) at (.5, 3.5) {$\mathsf{X}$};
\node (8) at (1.5, 3.5) {$\mathsf{X}$};
\node (8) at (.5, 2.5) {$\mathsf{X}$};
\draw[very thick, color=dredcolor] (0,1)--(0,2)--(1,2)--(1,3)--(2,3)--(2,5)--(3,5)--(3,6)--(4,6)--(4,7)--(6,7)--(6,6)--(5,6)--(5,5)--(4,5)--(4,4)--(3,4)--(3,3)--(2,3)--(2,2)--(1,2)--(1,1)--(0,1)--cycle;
\node (9) at (5,2) {$\pi_{\bdnu}$};
\node (10) at (2,-.7) {$\quad$};
\end{tikzpicture}
\caption{}\label{fig:example_nu}
\end{figure}

Starting from the top row (or the largest possible number in the reading order), we cross out the cells in the same row corresponding to the numbers which cannot make an inversion pair 
with the given number of the row. We obtain a (top-left justified) skew shape contained in a staircase shape in the end. 
\end{example}

We call the skew shape coming from an LLT diagram $\bdnu$ the \emph{Dyck diagram} and denote it by $\pi_{\bdnu}$, since the outer boundary defines a Dyck path.
Reading the number of cells in the $i$th row from the top defines an \emph{area sequence} which has the last entry $0$. 
For instance, the area sequence of the Dyck diagram in Figure \ref{fig:example_nu} is $a=(2,2,2,1,1,1,0)$. We denote the area sequence
of the Dyck diagram corresponding to the LLT diagram $\bdnu$ by $a_{\bdnu}$.


%


Given a Dyck diagram corresponding to an LLT diagram $\bdnu$, we can associate a graph which has $V=\{\nu^{(i)} :~ 1\le i \le n\}$ 
as the vertex set and the edge set as follows : label the vertices $\nu^{(i)}$ according to the reverse of the content reading order and for $i<j$, $(i,j)\in E$ 
if the cell in the column labeled $i$ and row labeled $j$ is contained in the Dyck diagram. In other words, replace the diagonal entries in the Dyck diagram $i$ by $n+1-i$, and according to this labelling, $(i,j)\in E$ if the cell in the row $i$ and column $j$ is in the Dyck diagram. 
We denote this graph by $G_{\bdnu}$. We also use the notation $\bdnu_G$ for the LLT diagram corresponding to a graph $G$. 
If there is no confusion, we use the notation $\LLT_{G}({\bf x};q)$ for $\LLT_{\bdnu_G}({\bf x};q)$.

\begin{example}

We keep considering the LLT diagram given in Figure \ref{fig:example_nu}.
To obtain the graph corresponding to the LLT diagram $\bdnu$, we relabel the main diagonal in reverse order (as in the left hand side of Figure \ref{fig:example_G}), and then draw edges $(i,j)$ for $i<j$ if the cell in the row $i$ and column $j$ is contained in the Dyck diagram $\pi_{\bdnu}$. Note that the $i$th cell in $\bdnu$ in terms of the reading order corresponds to the $n+1-i$ vertex in $G_{\bdnu}$.

\begin{figure}[ht]
\begin{tikzpicture}[scale=.6]
\fill[fill=red!8!white] (0,1) rectangle (1,2);
\fill[fill=red!8!white] (1,2) rectangle (2,3);
\fill[fill=red!8!white] (2,3) rectangle (3,5);
\fill[fill=red!8!white] (3,4) rectangle (4,6);
\fill[fill=red!8!white] (4,5) rectangle (5,7);
\fill[fill=red!8!white] (5,6) rectangle (6,7);
\draw (0,0)--(0,7)--(7,7);
\foreach \i in {1,...,6}
\draw (0,\i)--(\i,\i);
\foreach \i in {1,...,6}
\draw (\i,7)--(\i,\i);
\node (1) at (.5, .5) {$7$};
\node (2) at (1.5, 1.5) {$6$};
\node (3) at (2.5, 2.5) {$5$};
\node (4) at (3.5, 3.5) {$4$};
\node (5) at (4.5, 4.5) {$3$};
\node (6) at (5.5, 5.5) {$2$};
\node (7) at (6.5, 6.5) {$1$};
\node (8) at (.5, 6.5) {$\mathsf{X}$};
\node (8) at (1.5, 6.5) {$\mathsf{X}$};
\node (8) at (2.5, 6.5) {$\mathsf{X}$};
\node (8) at (3.5, 6.5) {$\mathsf{X}$};
\node (8) at (.5, 5.5) {$\mathsf{X}$};
\node (8) at (1.5, 5.5) {$\mathsf{X}$};
\node (8) at (2.5, 5.5) {$\mathsf{X}$};
\node (8) at (.5, 4.5) {$\mathsf{X}$};
\node (8) at (1.5, 4.5) {$\mathsf{X}$};
\node (8) at (.5, 3.5) {$\mathsf{X}$};
\node (8) at (1.5, 3.5) {$\mathsf{X}$};
\node (8) at (.5, 2.5) {$\mathsf{X}$};
\draw[very thick, color=dredcolor] (0,1)--(0,2)--(1,2)--(1,3)--(2,3)--(2,5)--(3,5)--(3,6)--(4,6)--(4,7)--(6,7)--(6,6)--(5,6)--(5,5)--(4,5)--(4,4)--(3,4)--(3,3)--(2,3)--(2,2)--(1,2)--(1,1)--(0,1)--cycle;
\node (9) at (5,2) {$\pi_{\bdnu}$};
\node (11) at (4.5, 6.5) {\color{blue}$13$};
\node (11) at (5.5, 6.5) {\color{blue}$12$};
\node (11) at (3.5, 5.5) {\color{blue}$24$};
\node (11) at (4.5, 5.5) {\color{blue}$23$};
\node (11) at (2.5, 4.5) {\color{blue}$35$};
\node (11) at (3.5, 4.5) {\color{blue}$34$};
\node (11) at (2.5, 3.5) {\color{blue}$45$};
\node (11) at (1.5, 2.5) {\color{blue}$56$};
\node (11) at (.5, 1.5) {\color{blue}$67$};
\node (9) at (8, 3) {$\Longleftrightarrow$};
\end{tikzpicture}
\qquad
\begin{tikzpicture}[scale=.4]
\filldraw [black] (0,0) circle (5pt)
                  (4,0) circle (5pt)
                  (0,4) circle (5pt)
                  (4,4) circle (5pt)
                  (7,2) circle (5pt)
                  (11,2) circle (5pt)
                  (15,2) circle (5pt);
\draw[thick] (0,4)--(0,0)--(4,0)--(4,4)--(0,0);
\draw[thick] (0,0)--(4,4);
\draw[thick] (0,4)--(4,0);
\draw[thick] (4,0)--(7,2);
\draw[thick] (4,4)--(7,2)--(15,2);
\node (1) at (0,-1) {$2$};
\node (1) at (0,5) {$1$};
\node (1) at (4,5) {$4$};
\node (1) at (4,-1) {$3$};
\node (1) at (7,1) {$5$};
\node (1) at (11,1) {$6$};
\node (1) at (15,1) {$7$};
\node (2) at (9, -2) {$G_{\bdnu}$};
\end{tikzpicture}
\caption{}\label{fig:example_G}
\end{figure}

\end{example}

We remark that given a coloring $\kappa$ of $G_{\bdnu}$, the statistic $\rm{asc}(\kappa)$ is consistent with the 
inversion statistic $\inv (\boldsymbol{S})$ obtained from the Dyck diagram $\pi_{\bdnu}$, where $\boldsymbol{S}$ is a word of 
length $n$, written in the main diagonal of $\pi_{\bdnu}$ from the south-west cell to the north-east cell (as we did in Example \ref{ex:dg}).
In fact, every unicellular LLT polynomial can be written as 
$$\LLT_{G}({\bf x};q) =\sum_{\kappa :V(G)\rightarrow \mathbb{Z}_{>0}} q^{\rm{asc}(\kappa)}x^{\kappa}$$
for some $G$ which is the incomparability graph of a natural unit interval order. By comparing to the definition of the chromatic quasisymmetric function $X_G ({\bf x};q)$, we can observe that the only difference is the \emph{proper} condition on the coloring $\kappa$. 
The precise relationship via \emph{plethysm} is given in \cite[Proposition 3.4]{CM}. 
\begin{prop}\cite[Proposition 3.4]{CM}
Let $G$ be the incomparability graph of a natural unit interval order with $n$ elements. Then we have 
$$ X_G ({\bf x};q)= (q-1)^{-n}\LLT_G [(q-1)X;q].$$

\end{prop}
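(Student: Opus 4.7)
The plan is to prove this identity by expanding both sides in Gessel's fundamental quasisymmetric functions $F_D$ and then applying plethysm termwise, reducing the symmetric function identity to a combinatorial statement about permutations.

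First, via standardization of colorings $\kappa: V(G) \to \mathbb{Z}_{>0}$, both sides admit expansions
\[
\LLT_G(\bx;q) = \sum_{\sigma \in S_n} q^{\inv_G(\sigma)} F_{D(\sigma)}(\bx),
\qquad
X_G(\bx;q) = \sum_{\sigma \in \mathcal{A}_G} q^{\inv_G(\sigma)} F_{D(\sigma)}(\bx),
\]
where $\inv_G(\sigma) = |\{i < j \,:\, \{i,j\} \in E(G), \, \sigma(i) > \sigma(j)\}|$, $D(\sigma)$ denotes the descent set of $\sigma$, and $\mathcal{A}_G \subseteq S_n$ is the set of permutations whose standardization yields a proper coloring. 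Each coloring contributes to the unique $\sigma = \operatorname{std}(\kappa)$, and summing $x^\kappa$ over $\kappa$ with fixed $\operatorname{std}(\kappa) = \sigma$ produces the fundamental $F_{D(\sigma)}$.

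Next, I would compute $F_D[(q-1)X;q]$ via the power-sum basis, using $p_n[(q-1)X] = (q-1)[n]_q\, p_n$ and the multiplicativity of plethysm. Each fundamental $F_D$ of degree $n$ thereby expands as $(q-1)^n$ times a $\mathbb{Z}[q]$-linear combination of fundamentals $F_{D'}$, which is the source of the factor $(q-1)^n$ in the identity. Substituting this into the expansion of $\LLT_G$ yields a double sum over pairs $(\sigma, D')$, each term carrying a sign inherited from the plethysm formula.

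Finally, to match with $(q-1)^n X_G(\bx;q)$, I would construct a sign-reversing involution on the pairs $(\sigma, D')$ with $\sigma \notin \mathcal{A}_G$, obtained by swapping a canonical pair of $G$-adjacent vertices whose standardization violates properness. The fixed points of the involution would coincide with $\mathcal{A}_G$, producing the desired matching. The main obstacle is designing this involution so that it preserves $q^{\inv_G(\sigma)}$ and the descent data while reversing the sign; because $\inv_G$ and $D(\sigma)$ interact nontrivially under adjacent swaps, a careful local case analysis is needed to ensure cancellation in all configurations, and the combinatorial bookkeeping here is the technical heart of the argument.
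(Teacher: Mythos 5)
This proposition is not proved in the paper at all: it is imported verbatim from Carlsson--Mellit \cite[Proposition 3.4]{CM}, so there is no internal argument to compare against and your attempt must stand on its own. Your overall strategy (fundamental expansions, push the plethysm through, cancel improper configurations by a signed involution) is indeed the shape of the known proofs, but as written there are concrete gaps well before the involution.

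First, the expansion you assert for $X_G$ is false. Colorings with a fixed standardization are not uniformly proper or improper, so no subset $\mathcal{A}_G \subseteq S_n$ paired with the \emph{ordinary} descent set $D(\sigma)$ can work. Already for $G=K_2$ one has $X_{K_2}({\bf x};q)=(1+q)e_2=(1+q)F_{\{1\}}$, whereas the only terms available from $\sigma=12,21$ with ordinary descents are $F_\emptyset=h_2$ and $qF_{\{1\}}=qe_2$, and no subset of these sums to $(1+q)e_2$. The correct Shareshian--Wachs expansion sums over \emph{all} of $S_n$ but uses a $G$-modified descent set, in which an edge $\{\sigma^{-1}(i),\sigma^{-1}(i+1)\}$ forces a strict increase and hence contributes to the descent composition even when $i\notin D(\sigma)$; getting this right matters because your final involution must match descent data exactly. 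Second, plethysm is not defined on individual quasisymmetric functions, so ``compute $F_D[(q-1)X]$ via the power-sum basis'' is not meaningful as stated, and the claim that each $F_D[(q-1)X]$ is $(q-1)^n$ times a $\mathbb{Z}[q]$-combination of fundamentals is not correct --- the factor $(q-1)^n$ only emerges after global cancellation. The standard repair is the superization lemma of Haglund--Haiman--Loehr \cite{HHL05}, which rewrites $f[(q-1)X]$ for a symmetric $f=\sum_D c_D F_D$ as a signed sum over super words; applied to $\LLT_G$ this turns the left-hand side into a signed sum over colorings by positive and negative letters, each vertex carrying a factor accounting for $q-1$, and Carlsson--Mellit then cancel at the first $G$-adjacent pair with equal absolute color. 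That cancellation --- which you correctly identify as the technical heart --- is exactly what is missing here, so the proposal is a plan rather than a proof: steps one and two need to be replaced by the correct statements above, and step three needs to be carried out.
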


Note that the square bracket on the right hand side of the above equation implies the \emph{plethystic substitution} and it is a convention that 
$X=x_1 + x_2 +\cdots$ in the plethystic substitution.
For the detailed explanation of it, we refer the reader to \cite{Hai2001, Hag08}.

Due to this relation between LLT polynomials and the chromatic quasisymmetric functions, we have the following equivalency of linear relations.

\begin{prop}\cite[Proposition 55]{AP}\label{prop:AP} Let $G_0,G_1,\dots,G_{\ell}$ be the incomparability graphs of natural unit interval orders. Then 
$$\sum_{i=0}^{\ell}c_i(q)X_{G_i}({\bf x};q)=0 \qquad\text{if and only if}\qquad \sum_{i=0}^{\ell}c_i(q)\LLT_{G_i}({\bf x};q)=0,$$ for some $c_i(q)$. \end{prop}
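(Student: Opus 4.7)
The plan is to deduce the equivalence directly from the Carlsson--Mellit identity
\[
X_G({\bf x};q) \;=\; (q-1)^{-|V(G)|}\,\LLT_G[(q-1)X;q]
\]
stated just above, combined with the invertibility of the plethystic operator $\phi : f\mapsto f[(q-1)X]$ on $\Lambda\otimes_{\mathbb{Z}}\mathbb{Q}(q)$. Since both $X_{G_i}({\bf x};q)$ and $\LLT_{G_i}({\bf x};q)$ are homogeneous of degree $|V(G_i)|$, any linear relation among them decomposes by degree, so I would reduce at the outset to the case in which every $G_i$ has a common number of vertices $n$.

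For the forward direction (LLT relation implies chromatic relation), I would apply $\phi$ term-by-term to $\sum_{i} c_i(q)\LLT_{G_i}({\bf x};q)=0$ and then divide by $(q-1)^n$; Carlsson--Mellit immediately converts this into $\sum_i c_i(q)X_{G_i}({\bf x};q)=0$. Note that the scalars $c_i(q)$ are unaffected by $\phi$ since the plethystic substitution $X\mapsto(q-1)X$ acts only on the ${\bf x}$-alphabet and treats $q$ as a scalar.

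For the reverse direction, I need to invert $\phi$. The key observation is that on the power-sum basis, $\phi$ acts as $p_k \mapsto p_k[(q-1)X] = (q^k-1)\,p_k$, so $\phi$ is a diagonal operator on $\Lambda\otimes\mathbb{Q}(q)$ whose eigenvalues $q^k-1$ are all nonzero in $\mathbb{Q}(q)$. Therefore $\phi$ is invertible, with $\phi^{-1}$ determined by $p_k \mapsto p_k/(q^k-1)$. Starting from $\sum_i c_i(q)X_{G_i}({\bf x};q)=0$, multiplying by $(q-1)^n$ gives $\sum_i c_i(q)\LLT_{G_i}[(q-1)X;q]=0$, and applying $\phi^{-1}$ to both sides produces the desired identity $\sum_i c_i(q)\LLT_{G_i}({\bf x};q)=0$.

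The main thing to be careful about is the invertibility of $\phi$ on the appropriate coefficient ring; working over $\mathbb{Q}(q)$ (or any ring in which each $q^k-1$ is invertible) makes this immediate from the power-sum computation, and this is the only nontrivial ingredient beyond the Carlsson--Mellit identity itself. Once that is in hand, the two directions are symmetric and the proof reduces to a one-line application of $\phi$ and $\phi^{-1}$.
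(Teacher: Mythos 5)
Your proposal is correct and follows essentially the same route as the source the paper cites: the result is stated without proof as a consequence of the Carlsson--Mellit plethystic identity, and the argument in \cite{AP} is precisely the one you give, namely that $f\mapsto f[(q-1)X]$ is an invertible $\mathbb{Q}(q)$-linear operator (diagonal on the power sums with nonzero eigenvalues $q^k-1$), so linear relations transfer in both directions after the harmless reduction to a common degree.
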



\bigskip


\section{The $k$-deletion property} \label{sec:local}


Recently, Lee \cite{Lee} provided a local linear relation between some unicellular LLT polynomials.

\begin{thm}\cite[Theorem 3.4]{Lee}(Local linear relation)\label{thm:Lee}
For an area sequence $a=(a_1,a_2,\dots,a_n)$ and $i$ such that $a_{i-1}+1\leq a_{i}$ (we set $a_0=1$), 
let $a^{0}=a,a^{1},a^{2}$ be area sequences defined by $a^{z}_j=a_i$ if $j\neq i$ and $a^{z}_i=a_i-z$ for $z=0,1,2$. If $a_{i+a_i-1}=a_{i+a_i}+1$, then 
$$\LLT_{\bdnu^0}({\bf x};q)+q\LLT_{\bdnu^2}({\bf x};q)=(1+q)\LLT_{\bdnu^1}({\bf x};q),$$
where $\bdnu^{z}$ is a unicellular LLT diagram satisfying that $a_{\bdnu^{z}}=a^{z}$ for $z=0,1,2$.

Equivalently, if we let $G_{z}$ be the graph $\bdnu^{z}_G$ for $z=0,1,2$, then   
$$X_{G_0}({\bf x};q)+q X_{G_2}({\bf x};q)=(1+q) X_{G_1}({\bf x};q).$$
\end{thm}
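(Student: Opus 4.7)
By Proposition \ref{prop:AP}, it suffices to establish the chromatic quasisymmetric identity $X_{G_0}(\bx;q)+qX_{G_2}(\bx;q)=(1+q)X_{G_1}(\bx;q)$, which I would prove by constructing a sign-reversing involution on proper colorings. Translating the hypotheses via Section \ref{subsec:relation}: the three graphs share vertex set $[n]$ and form a chain $G_0\supsetneq G_1\supsetneq G_2$ differing only by $e_1=\{i,i+a_i\}$ (in $G_0$ only) and $e_2=\{i,i+a_i-1\}$ (in $G_1$ only). Setting $j:=i+a_i-1$ and $k:=i+a_i$, the hypothesis $a_j=a_k+1$ makes $\{j,k\}$ an edge of every $G_z$; combined with $a_{i-1}+1\le a_i$ (which forces $m_{i-1}<j$, so no vertex $\le i-1$ is adjacent to $j$ or $k$), one checks that in $G_2$ the vertices $j$ and $k$ have identical neighbor sets apart from each other.

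Let $\mathcal{P}_z$ be the set of proper colorings of $G_z$ and partition $\mathcal{P}_2 = A\sqcup C\sqcup D$ with $A=\mathcal{P}_0$, $C=\{\kappa\in\mathcal{P}_1\setminus\mathcal{P}_0 : \kappa(i)=\kappa(k)\}$, and $D=\{\kappa\in\mathcal{P}_2\setminus\mathcal{P}_1 : \kappa(i)=\kappa(j)\}$; these three sets are disjoint because $\{j,k\}\in G_2$ forces $\kappa(j)\ne\kappa(k)$ on $\mathcal{P}_2$. Writing $\alpha(\kappa):=\mathrm{asc}_{G_2}(\kappa)$ and $\epsilon_v(\kappa):=[\kappa(i)<\kappa(v)]$, one has $\mathrm{asc}_{G_0}(\kappa)=\alpha+\epsilon_j+\epsilon_k$ on $A$ and $\mathrm{asc}_{G_1}(\kappa)=\alpha+\epsilon_j$ on $A\sqcup C$, so each of $X_{G_0},X_{G_1},X_{G_2}$ can be rewritten as a sum over these sets with explicit $q$-weights. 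Now define $\sigma:\mathcal{P}_2\to\mathcal{P}_2$ by exchanging $\kappa(j)\leftrightarrow\kappa(k)$: the neighborhood-matching fact makes $\sigma$ well-defined, it fixes $A$ setwise and interchanges $C\leftrightarrow D$, and it shifts $\alpha$ by $\Delta:=1-2[\kappa(j)<\kappa(k)]$ since only the edge $\{j,k\}$ has its ascent status changed.

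Finally, I would verify that each $\sigma$-orbit contributes $0$ to $X_{G_0}+qX_{G_2}-(1+q)X_{G_1}$: on $A$ the orbits split by $(\epsilon_j,\epsilon_k)$, the aligned classes $(0,0)$ and $(1,1)$ contributing $0$ term by term, and the mixed pairs $(0,1)\leftrightarrow(1,0)$ cancelling because the $\Delta$-shift of $\alpha$ (here $\mp 1$) exactly compensates for the change in the $\epsilon$-shift of the $q$-weight; on $C\leftrightarrow D$ the values $\Delta=-1$ when $\epsilon_j(\kappa)=0$ and $\Delta=+1$ when $\epsilon_j(\kappa)=1$ produce matching $q$-powers between the $C$-contribution $-q^{\alpha+\epsilon_j}$ and the $D$-contribution $q^{\alpha(\sigma(\kappa))+1}$. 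The main obstacle I foresee is exactly this bookkeeping: $\mathrm{asc}_{G_z}$ depends on $z$ and $\sigma$ affects it only through the single edge $\{j,k\}$, so the interplay between $\Delta$ and the indicators $\epsilon_j,\epsilon_k$ must be verified in each subcase; once the neighborhood-matching structure is in hand, however, every cancellation reduces to a short local computation.
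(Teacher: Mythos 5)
Your proposal cannot be compared against an internal proof, because the paper does not prove this statement: it is imported verbatim as Theorem~3.4 of \cite{Lee}, and the only thing the paper itself supplies is the translation between the LLT and chromatic formulations (Proposition~\ref{prop:AP}), which you invoke correctly. Your argument is therefore an independent verification, carried out on the chromatic side by the standard ``modular law'' colour-swap involution, whereas Lee's original proof works on the LLT/Dyck-diagram side with the $\inv$ statistic; your route is arguably the more elementary one and is the way such relations are usually established for chromatic quasisymmetric functions. The structural claims all check out: with $j=i+a_i-1$, $k=i+a_i$, the hypothesis $a_j=a_k+1$ gives $j+a_j=k+a_k$ so $j$ and $k$ have the same neighbours above $k$; the hypothesis $a_{i-1}+1\le a_i$ together with the nondecreasing of $v\mapsto v+a_v$ clears out all vertices $\le i$; and for $i<v<j$ one has $v+a_v\ge i+a_i=k$, so such $v$ are adjacent to both $j$ and $k$ --- this last step is needed for $\sigma$ to preserve properness and you should state it explicitly rather than leave it inside ``one checks.'' The decomposition $\mathcal{P}_2=A\sqcup C\sqcup D$, the ascent bookkeeping, and the orbit-by-orbit cancellation are all genuine (note $x^{\sigma\kappa}=x^{\kappa}$ since the colour multiset is unchanged). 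One computational slip: the $C$-contribution to $X_{G_0}+qX_{G_2}-(1+q)X_{G_1}$ is $q^{\alpha+1}-(1+q)q^{\alpha+\epsilon_j}=-q^{\alpha+2\epsilon_j}$, not $-q^{\alpha+\epsilon_j}$; with your (correct) values $\Delta=-1$ for $\epsilon_j=0$ and $\Delta=+1$ for $\epsilon_j=1$, the $D$-partner contributes $q^{\alpha+1+\Delta}\in\{q^{\alpha},q^{\alpha+2}\}$, which matches $-q^{\alpha+2\epsilon_j}$ exactly but does not match the exponent you wrote in the $\epsilon_j=1$ subcase. With that exponent corrected, the proof is complete.
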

\smallskip
\begin{example}
Let $a=(2,3,3,2,1,1,0)$ be an area sequence and let $i=2$. Since $a_1+1\leq a_2$, $a^1=(2,2,3,2,1,1,0)$ and $a^2=(2,1,3,2,1,1,0)$ are well-defined. We note that $a_{2+a_2-1}=a_4=2$ and  $a_{2+a_2}=a_5=1$ so that $a_{2+a_2-1}=a_{2+a_2}+1$. Therefore, 
$$\LLT_{\bdnu^0}({\bf x};q)+q\LLT_{\bdnu^2}({\bf x};q)=(1+q)\LLT_{\bdnu^1}({\bf x};q),$$
where $\bdnu^{0}$, $\bdnu^{1}$, $\bdnu^{2}$ are defined as Figure \ref{fig:localex}.

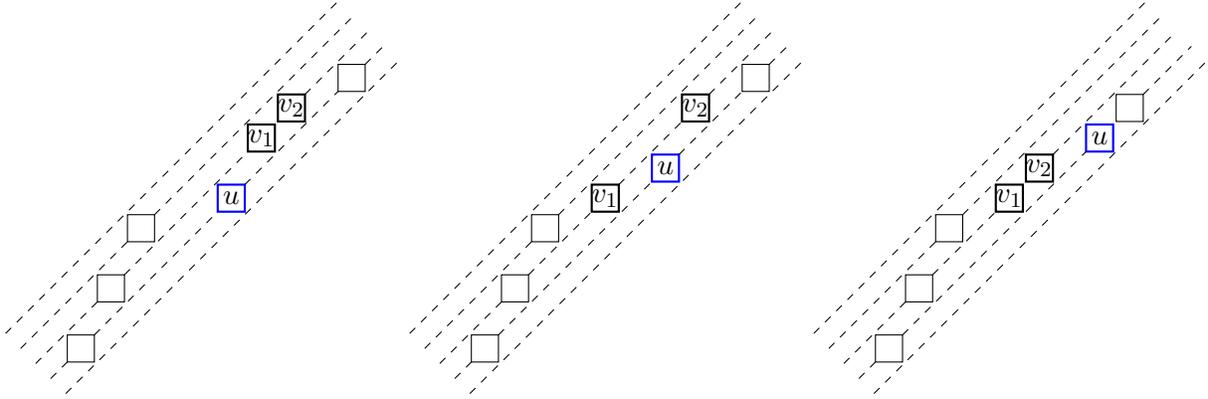
\begin{figure}[ht]
\begin{tikzpicture}[scale=.4]
\draw (1.05,-0.95)--(1.95,-0.95)--(1.95,-0.05)--(1.05,-0.05)--cycle;
\draw (2.05,1.05)--(2.95,1.05)--(2.95,1.95)--(2.05,1.95)--cycle;
\draw[thick, color=blue] (6.05,4.05)--(6.95,4.05)--(6.95,4.95)--(6.05,4.95)--cycle;
\draw (3.05,3.05)--(3.95,3.05)--(3.95,3.95)--(3.05,3.95)--cycle;
\draw[thick] (8.05,7.05)--(8.95,7.05)--(8.95,7.95)--(8.05,7.95)--cycle;
\draw[thick] (7.05,6.05)--(7.95,6.05)--(7.95,6.95)--(7.05,6.95)--cycle;
\draw (10.05,8.05)--(10.95,8.05)--(10.95,8.95)--(10.05,8.95)--cycle;

\draw[dashed] (-1,0)--(10.05,11.05);
\draw[dashed] (-.5,-.5)--(3.05,3.05)
              (3.95,3.95)--(10.55,10.55);
\draw[dashed] (0,-1)--(2.05,1.05)
              (2.95,1.95)--(7.05,6.05)
             (8.95,7.95)--(11.05,10.05);
\draw[dashed] (.5,-1.5)--(1.05,-.95)
              (1.95,-0.05)--(6.05,4.05)
             (6.95,4.95)--(10.05,8.05)
              (10.95,8.95)--(11.55,9.55);
\draw[dashed] (1,-2)--(12.05,9.05);
\node[] at (6.5,4.5) {$u$};
\node[] at (7.5, 6.5) {$v_1$};
\node[] at (8.5, 7.5) {$v_2$};
\end{tikzpicture}
\begin{tikzpicture}[scale=.4]
\draw (1.05,-0.95)--(1.95,-0.95)--(1.95,-0.05)--(1.05,-0.05)--cycle;
\draw (2.05,1.05)--(2.95,1.05)--(2.95,1.95)--(2.05,1.95)--cycle;
\draw[thick, color=blue] (7.05,5.05)--(7.95,5.05)--(7.95,5.95)--(7.05,5.95)--cycle;
\draw (3.05,3.05)--(3.95,3.05)--(3.95,3.95)--(3.05,3.95)--cycle;
\draw[thick] (5.05,4.05)--(5.95,4.05)--(5.95,4.95)--(5.05,4.95)--cycle;
\draw[thick] (8.05,7.05)--(8.95,7.05)--(8.95,7.95)--(8.05,7.95)--cycle;
\draw (10.05,8.05)--(10.95,8.05)--(10.95,8.95)--(10.05,8.95)--cycle;

\draw[dashed] (-1,0)--(10.05,11.05);
\draw[dashed] (-.5,-.5)--(3.05,3.05)
              (3.95,3.95)--(10.55,10.55);
\draw[dashed] (0,-1)--(2.05,1.05)
              (2.95,1.95)--(5.05,4.05)
              (5.95,4.95)--(8.05,7.05)
              (8.95,7.95)--(11.05,10.05);
\draw[dashed] (.5,-1.5)--(1.05,-.95)
              (1.95,-0.05)--(7.05,5.05)
              (7.95,5.95)--(10.05,8.05)
              (10.95,8.95)--(11.55,9.55);
\draw[dashed] (1,-2)--(12.05,9.05);
\node[] at (7.5,5.5) {$u$};
\node[] at (5.5, 4.5) {$v_1$};
\node[] at (8.5, 7.5) {$v_2$};
\end{tikzpicture}
\begin{tikzpicture}[scale=.4]
\draw (1.05,-0.95)--(1.95,-0.95)--(1.95,-0.05)--(1.05,-0.05)--cycle;
\draw (2.05,1.05)--(2.95,1.05)--(2.95,1.95)--(2.05,1.95)--cycle;
\draw[thick, color=blue] (8.05,6.05)--(8.95,6.05)--(8.95,6.95)--(8.05,6.95)--cycle;
\draw (3.05,3.05)--(3.95,3.05)--(3.95,3.95)--(3.05,3.95)--cycle;
\draw[thick] (5.05,4.05)--(5.95,4.05)--(5.95,4.95)--(5.05,4.95)--cycle;
\draw[thick] (6.05,5.05)--(6.95,5.05)--(6.95,5.95)--(6.05,5.95)--cycle;
\draw (9.05,7.05)--(9.95,7.05)--(9.95,7.95)--(9.05,7.95)--cycle;

\draw[dashed] (-1,0)--(10.05,11.05);
\draw[dashed] (-.5,-.5)--(3.05,3.05)
              (3.95,3.95)--(10.55,10.55);
\draw[dashed] (0,-1)--(2.05,1.05)
              (2.95,1.95)--(5.05,4.05)
              (6.95,5.95)--(11.05,10.05);
\draw[dashed] (.5,-1.5)--(1.05,-.95)
              (1.95,-0.05)--(8.05,6.05)
              (9.95,7.95)--(11.55,9.55);
\draw[dashed] (1,-2)--(12.05,9.05);
\node[] at (8.5,6.5) {$u$};
\node[] at (5.5, 4.5) {$v_1$};
\node[] at (6.5, 5.5) {$v_2$};
\end{tikzpicture}
\caption{$\bdnu^0$, $\bdnu^1$ and $\bdnu^2$ from the left.}\label{fig:localex}
\end{figure}

\end{example}

Remark that the condition $a_{i+a_i-1}=a_{i+a_i}+1$ prevents any cells existing on the left consecutive diagonal of the diagonal containing $v_1$ and $v_2$ in Figure \ref{fig:localex}, in between $v_1$ and $v_2$. If the condition is not satisfied, then the linear relation does not hold.
\begin{figure}[ht]
\begin{tikzpicture}[scale=.4]
\draw[thick] (0.05,0.05)--(0.95,0.05)--(0.95,0.95)--(0.05,0.95)--cycle;
\draw[thick] (1.05,2.05)--(1.95,2.05)--(1.95,2.95)--(1.05,2.95)--cycle;
\draw[thick, color=blue] (2.05,4.05)--(2.95,4.05)--(2.95,4.95)--(2.05,4.95)--cycle;
\draw[thick] (4.05,5.05)--(4.95,5.05)--(4.95,5.95)--(4.05,5.95)--cycle;
\draw[dashed] (-2,1)--(4.5,7.5);
\draw[dashed] (-1.5, .5)--(2.05, 4.05)
			(2.95,4.95)--(5,7);
\draw[dashed] (-1,0)--(1.05, 2.05)
			(1.95,2.95)--(4.05, 5.05)
			(4.95, 5.95)--(5.5, 6.5);
\draw[dashed] (-.5, -.5)--(.05, .05)
			(.95,.95)--(6,6)
			(0,-1)--(6.5,5.5);
\node[] at (.5, .5) {$u$};
\node[] at (1.5, 2.5) {$v_1$};
\node[] at (4.5, 5.5) {$v_2$};
\node[] at (2.5, 4.5) {$x$};
\end{tikzpicture}
\caption{}\label{fig:ctex}
\end{figure}
For example, the LLT diagram in Figure \ref{fig:ctex} has the area sequence $a= (2,1,1,0)$ which satisfies the condition $a_{i-1}+1\le a_i$ for $i=1$. 
However, due to the existence of the cell containing $x$, the condition $a_{i+a_i -1}=a_{i+a_i}+1$ is not satisfied for $i=1$; $a_{1+a_1 -1}= a_2 =1$ and $a_{1+a_1}+1=a_{3}+1=2$.
Thus, the linear relation does not hold among the LLT polynomials corresponding to the LLT diagrams $\bdnu^0$, $\bdnu^1$ and $\bdnu^2$, where $\bdnu^0$ is given in Figure \ref{fig:ctex} and $\bdnu^1$ and $\bdnu^2$ are obtained by moving the cell $u$ upward along the diagonal so that $v_1$ cell, and both of $v_1 $ and $v_2$ cells are 
on the left-below of the cell $u$, respectively.

We note that for the incomparability graphs of natural unit interval orders with some restrictions, Theorem \ref{thm:Lee} is a refinement of Triple-deletion property:

\begin{prop}\cite[Theorem 3.1]{OS}(Triple-deletion property) \label{prop:OS}Let $G$ be a graph with edge set $E$ such that $e_1,e_2,e_3\in E$ form a triangle. Then
$$X_{G}({\bf x})=X_{G-\{e_1\}}({\bf x})+X_{G-\{e_2\}}({\bf x})-X_{G-\{e_1,e_2\}}({\bf x}).$$  
\end{prop}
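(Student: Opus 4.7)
The plan is to prove this identity by a direct inclusion--exclusion on proper colorings. Label the triangle so that $e_1=\{v_1,v_2\}$, $e_2=\{v_1,v_3\}$, and $e_3=\{v_2,v_3\}$, and work inside the collection $\mathcal{C}$ of proper colorings of $G-\{e_1,e_2\}$. Crucially, the edge $e_3$ is still present in $G-\{e_1,e_2\}$, so every $\kappa\in\mathcal{C}$ already satisfies $\kappa(v_2)\neq\kappa(v_3)$.

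Inside $\mathcal{C}$ I would define
$$A=\{\kappa\in\mathcal{C}:\kappa(v_1)\neq\kappa(v_2)\},\qquad B=\{\kappa\in\mathcal{C}:\kappa(v_1)\neq\kappa(v_3)\}.$$
A direct check from the definition of a proper coloring identifies $A$ with the proper colorings of $G-\{e_2\}$, $B$ with those of $G-\{e_1\}$, and $A\cap B$ with the proper colorings of $G$ itself (the condition on $e_3$ being automatic in $\mathcal{C}$). The key step, and the only place the triangle hypothesis is used, is to show that $A\cup B=\mathcal{C}$: if $\kappa\in\mathcal{C}$ satisfied both $\kappa(v_1)=\kappa(v_2)$ and $\kappa(v_1)=\kappa(v_3)$, then $\kappa(v_2)=\kappa(v_3)$, contradicting properness on $e_3$.

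Now weight each proper coloring by $x^\kappa$ and sum the pointwise identity $\mathbf{1}_A+\mathbf{1}_B=\mathbf{1}_{A\cup B}+\mathbf{1}_{A\cap B}$ over $\mathcal{C}$. The four resulting generating functions are precisely $X_{G-\{e_2\}}(\bx)$, $X_{G-\{e_1\}}(\bx)$, $X_{G-\{e_1,e_2\}}(\bx)$, and $X_G(\bx)$, giving
$$X_{G-\{e_2\}}(\bx)+X_{G-\{e_1\}}(\bx)=X_{G-\{e_1,e_2\}}(\bx)+X_G(\bx),$$
which rearranges to the asserted formula. I anticipate no substantive obstacle: the argument is entirely set-theoretic, and once the covering property $A\cup B=\mathcal{C}$ is noted, the rest is bookkeeping. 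The only subtlety worth highlighting is that the proof really needs all three edges of the triangle to lie in $G$; without the third edge $e_3$, nothing would prevent $\kappa(v_1)=\kappa(v_2)=\kappa(v_3)$ from occurring in $\mathcal{C}$ and the covering identity would fail.
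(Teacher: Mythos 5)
Your proof is correct. The paper states this result as a citation of \cite[Theorem 3.1]{OS} without reproducing a proof, and your inclusion--exclusion over the proper colorings of $G-\{e_1,e_2\}$ --- with the remaining triangle edge $e_3$ guaranteeing the covering $A\cup B=\mathcal{C}$ --- is essentially the standard argument given there.
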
 

We can further generalize the linear relations by applying Theorem \ref{thm:Lee} iteratively. 

\begin{thm}\label{thm:local} 
For an area sequence $a=(a_1,a_2,\dots,a_n)$, $2\le \ell\le n-1 $, and $i$ with $a_{i-1}+\ell-1\leq a_{i}$, 
let $a^{0}=a,a^{1},\dots,a^{\ell}$ be area sequences defined by $a^{z}_j=a_i$ if $j\neq i$ and $a^{z}_i=a_i-z$ for $z=0,1,\dots,\ell$. 
If 
$$a_{i+a_i-1}=a_{i+a_i}+1,\quad a_{i+a_i-2}=a_{i+a_i-1}+1,\quad \dots,\quad a_{i+a_i-\ell+1}=a_{i+a_i-\ell+2}+1,$$ 
then, for $1\le k\le \ell -1$, 

\begin{enumerate} 
\item[(a)] $\LLT_{\bdnu^0}({\bf x};q)+q[k]_q \LLT_{\bdnu^{k+1}}({\bf x};q)=[k+1]_q \LLT_{\bdnu^{k}}({\bf x};q)$,\\
\item[(b)] $[\ell-k]_q \LLT_{\bdnu^0}({\bf x};q)+q^{\ell-k}[k]_q \LLT_{\bdnu^{\ell}}({\bf x};q)=[\ell]_q \LLT_{\bdnu^k}({\bf x};q),$
\end{enumerate} 
\medskip
where $\bdnu^{z}$ is a unicellular LLT diagram satisfying that $a_{\bdnu^{z}}=a^{z}$ for $z=0,1,\dots,\ell$.

Equivalently, for $z=0,1,\dots,\ell$, if we simply denote $G_{\bdnu^{z}}$ by $G_{z}$, then for $1\le k\le \ell -1$,
\medskip
\begin{enumerate} 
\item[(a$'$)] $X_{G_0}({\bf x};q)+q[k]_q X_{G_{k+1}}({\bf x};q)=[k+1]_q X_{G_{k}}({\bf x};q)$,\\
\item[(b$'$)] $[\ell-k]_q X_{G_0}({\bf x};q)+q^{\ell-k}[k]_q X_{G_{\ell}}({\bf x};q)=[\ell]_q X_{G_k}({\bf x};q).$
\end{enumerate} 
\end{thm}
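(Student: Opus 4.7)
The entire argument rests on iterating Lee's theorem (Theorem \ref{thm:Lee}) and performing two nested inductions. Since Proposition \ref{prop:AP} gives an equivalence between linear relations among chromatic quasisymmetric functions and among unicellular LLT polynomials, the statements (a$'$) and (b$'$) follow immediately from (a) and (b) respectively, so I would only prove the LLT versions.

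The first step is to check that Theorem \ref{thm:Lee} applies not only to $a$ itself, but to every shifted area sequence $a^k$ at the same index $i$, for $k=0,1,\dots,\ell-2$. For the first hypothesis of Lee's theorem, $a^k_{i-1}+1\le a^k_i$ becomes $a_{i-1}+1\le a_i-k$, i.e.\ $k\le a_i-a_{i-1}-1$, and the bound $a_{i-1}+\ell-1\le a_i$ is exactly what ensures this for $k\le \ell-2$. For the second hypothesis, $a^k_{i+a^k_i-1}=a^k_{i+a^k_i}+1$ unwinds (using that $a^k$ agrees with $a$ away from index $i$) to the condition $a_{i+a_i-(k+1)}=a_{i+a_i-k}+1$, which is exactly the hypothesis at $j=k+1$. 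This yields the chain of three-term relations
\begin{equation}\label{eq:chain}
\LLT_{\bdnu^{k}}({\bf x};q)+q\,\LLT_{\bdnu^{k+2}}({\bf x};q)=(1+q)\,\LLT_{\bdnu^{k+1}}({\bf x};q), \qquad k=0,1,\dots,\ell-2.
\end{equation}

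I would then prove (a) by induction on $k$. The case $k=1$ is \eqref{eq:chain} at $k=0$. For the inductive step, assuming (a) at some $1\le k\le \ell-2$, I would use \eqref{eq:chain} at this $k$ to solve for $\LLT_{\bdnu^{k}}$ and substitute into the inductive hypothesis; after collecting terms one needs only the $q$-integer identity $(1+q)[k+1]_q-q[k]_q=[k+2]_q$, which is immediate from $[k+1]_q=[k]_q+q^k$, to conclude (a) at $k+1$.

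For (b) I would do a downward induction on $k$, from $k=\ell-1$ (which is exactly (a) at $k=\ell-1$) down to $k=1$. Assuming (b) at $k+1$, combine it with (a) at $k$: from (a) one writes $[\ell]_q[k+1]_q\LLT_{\bdnu^k}=[\ell]_q\LLT_{\bdnu^0}+q[k]_q\,[\ell]_q\LLT_{\bdnu^{k+1}}$ and replaces $[\ell]_q\LLT_{\bdnu^{k+1}}$ by the right-hand side of the inductive hypothesis. The coefficient of $\LLT_{\bdnu^0}$ that results is $[\ell]_q+q[k]_q[\ell-k-1]_q$, and the coefficient of $\LLT_{\bdnu^\ell}$ is $q^{\ell-k}[k]_q[k+1]_q$; dividing by $[k+1]_q$ reduces (b) at $k$ to the identity
\[
[k+1]_q[\ell-k]_q-q[k]_q[\ell-k-1]_q=[\ell]_q,
\]
which follows from $[k+1]_q=[k]_q+q^k$ together with $[\ell-k]_q=1+q[\ell-k-1]_q$ and the standard decomposition $[\ell]_q=[k]_q+q^k[\ell-k]_q$.

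None of these steps is technically deep; the one I would be most careful about is the preservation of Lee's hypotheses under the shift $a\mapsto a^k$, because the second condition involves indices that themselves depend on $a_i$, and one needs to notice that decreasing $a_i$ by $1$ shifts the index of the new condition exactly so that it is supplied by the next hypothesis at $j=k+1$. Once the chain \eqref{eq:chain} is in hand, both (a) and (b) are purely algebraic consequences, and the transfer to (a$'$), (b$'$) via Proposition \ref{prop:AP} is automatic.
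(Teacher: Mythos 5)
Your proposal is correct and follows essentially the same route as the paper: both apply Lee's relation to each shifted sequence $a^z$ ($z=0,\dots,\ell-2$) to obtain the chain of three-term relations, prove (a) by induction on $k$ via the identity $(1+q)[k+1]_q-q[k]_q=[k+2]_q$, and deduce (b) by a downward induction in $k$ (the paper's upward induction in $m=\ell-k$) using the same $q$-integer identity $[k+1]_q[\ell-k]_q-q[k]_q[\ell-k-1]_q=[\ell]_q$. Your explicit verification that Lee's hypotheses are preserved under the shift $a\mapsto a^k$ is more detailed than the paper's one-line assertion, but the argument is the same.
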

\smallskip
\begin{proof}
We note that for each $z\in \{0,1,\dots,\ell-2\}$, the area sequence $a^z$ satisfies the condition of Theorem \ref{thm:Lee}. Therefore, we have 
\begin{eqnarray*}
X_{G_0}({\bf x};q)+q X_{G_2}({\bf x};q)&=&[2]_q X_{G_1}({\bf x};q),\\
X_{G_1}({\bf x};q)+q X_{G_3}({\bf x};q)&=&[2]_q X_{G_2}({\bf x};q),\\
&\vdots&\\
X_{G_{\ell-2}}({\bf x};q)+q X_{G_{\ell}}({\bf x};q)&=&[2]_q X_{G_{\ell-1}}({\bf x};q).
\end{eqnarray*}

We prove the theorem by mathematical induction.
\begin{enumerate}
\item[(a$'$)] The case when $k=1$ is equivalent to Theorem \ref{thm:Lee}. So we may assume that this statement holds for $1\leq k \leq \ell-2$; 
$$X_{G_0}({\bf x};q)+q[k]_q X_{G_{k+1}}({\bf x};q)=[k+1]_q X_{G_k}({\bf x};q).$$
From Theorem \ref{thm:Lee}, it follows that
$$X_{G_0}({\bf x};q)+q[k]_q X_{G_{k+1}}({\bf x};q)=[k+1]_q([2]_q X_{G_{k+1}}({\bf x};q)-q X_{G_{k+2}}({\bf x};q)).$$
Then we obtain
\begin{eqnarray*}
X_{G_0}({\bf x};q)+q[k+1]_q X_{G_{k+2}}({\bf x};q)&=&([2]_q[k+1]_q-q[k]_q)X_{G_{k+1}}({\bf x};q)\\
&=&[k+2]_q X_{G_{k+1}}({\bf x};q),
\end{eqnarray*}
which complete the induction step and the proof.
\item[(b$'$)] The equation is equivalent to the following equation. 
$$[m]_q X_{G_0}({\bf x};q)+q^{m}[\ell-m]_q X_{G_{\ell}}({\bf x};q)=[\ell]_q X_{G_{\ell-m}}({\bf x};q).$$
If $m=1$, then it is true by (a$'$). Now we assume that
$$[m]_q X_{G_0}({\bf x};q)+q^{m}[\ell-m]_q X_{G_{\ell}}({\bf x};q)=[\ell]_q X_{G_{\ell-m}}({\bf x};q)$$
for $m<\ell-1$, then by (a$'$) we have
$$[m]_q X_{G_0}({\bf x};q)+q^{m}[\ell-m]_q X_{G_{\ell}}({\bf x};q)=\frac{[\ell]_q}{q[\ell-m-1]_q}([\ell-m]_q X_{G_{\ell-m-1}}({\bf x};q)-X_{G_0}({\bf x};q)),$$
or equivalently,
\begin{eqnarray*}
&&[\ell]_q[\ell-m]_q X_{G_{\ell-m-1}}({\bf x};q)\\ 
&& \qquad \qquad \qquad  =(q[\ell-m-1]_q[m]_q+[\ell]_q)X_{G_0}({\bf x};q)+q^{m+1}[\ell-m-1]_q[\ell-m]_q X_{G_{\ell}}({\bf x};q).
\end{eqnarray*}
Since $[\ell]_q=[m]_q+q^m[\ell-m]_q$,
$$q[\ell-m-1]_q[m]_q+[\ell]_q=q[\ell-m-1]_q[m]_q+([m]_q+q^m[\ell-m]_q)=[m+1]_q[\ell-m]_q.$$
From this, we have
$$[\ell]_q X_{G_{\ell-m-1}}({\bf x};q)=[m+1]_q X_{G_0}({\bf x};q)+q^{m+1}[\ell-m-1]_q X_{G_{\ell}}({\bf x};q),$$
as we desired. 
\end{enumerate}
\end{proof}


\section{$e$-expnasion of chromatic quasisymmetric functions related to certain graphs}\label{sec:lollipopG}


For two graphs $G$ and $H$ with vertex set $\{v_1,v_2,\dots,v_n\}$ and $\{v_n,v_{n+1},\dots,v_{n+m}\}$, respectively, let $G+H$ to be the graph with $V(G+H)=\{v_{1},v_{2},\dots,v_{n+m}\}$ and $E(G+H)=E(G)\cup E(H)$.\\
A graph $P_{n+1}+K_m$ is called a {\it lollipop graph} $L_{m,n}$ on $[m+n]$, where $P_{n+1}$ is a path on $[n+1]$ and $K_m$ is a complete graph with vertices $\{n+1,n+2,\dots,n+m\}$. We note that the lollipop graph $L_{m,n}$ on $[m+n]$ is the incomparability graph of the natural unit interval order $P(m_1,m_2,\dots,m_{m+n-1})$ such that  $m_i=i+1$ for $i\leq n$ and $m_{i}=n+m$ for $i> n$. Figure \ref{fig:lollipop} shows the lollipop graph $L_{6,5}$.

Recently, Dahlberg and van Willigenburg \cite{DW} gave an explicit $e$-positive formula for the chromatic symmetric function of a lollipop graph by iterating Triple-deletion property.

\begin{prop}\cite[Proposition 10]{DW} \label{prop:lollipop}For $m\geq 2$ and $n\geq 0$, $$X_{L_{m,n}}({\bf x})=\frac{(m-1)!}{(m+n-1)!}X_{K_{m+n}}({\bf x})+\sum_{i=0}^{n-1}\frac{(m+i-1)}{m(m+1)\cdots(m+i)} X_{P_{n-i}}({\bf x}) X_{K_{m+i}}({\bf x}).$$\end{prop}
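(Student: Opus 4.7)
I would proceed by induction on $n \ge 0$. The base case $n = 0$ is immediate, since $L_{m,0}=K_m$ and the formula reduces to $X_{K_m}({\bf x})$.

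For the inductive step, the heart of the argument is to establish the one-step recurrence
\begin{equation*}
X_{L_{m,n}}({\bf x}) \;=\; \tfrac{1}{m}\, X_{L_{m+1,n-1}}({\bf x}) \;+\; \tfrac{m-1}{m}\, X_{P_n}({\bf x})\, X_{K_m}({\bf x}),
\end{equation*}
which I will denote by $(\star)$. Granting $(\star)$, one applies the inductive hypothesis to $X_{L_{m+1,n-1}}$ and re-indexes the resulting sum. The $X_{K_{m+n}}$ term acquires coefficient $\tfrac{1}{m}\cdot\tfrac{m!}{(m+n-1)!} = \tfrac{(m-1)!}{(m+n-1)!}$; for $1\le i \le n-1$, the coefficient of $X_{P_{n-i}}X_{K_{m+i}}$ becomes $\tfrac{m+i-1}{m(m+1)\cdots(m+i)}$ (after shifting the summation index $j = i-1$ in the hypothesis applied to $L_{m+1,n-1}$); finally, the correction term in $(\star)$ supplies the missing $i = 0$ summand $\tfrac{m-1}{m}X_{P_n}X_{K_m}$. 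Assembling these contributions exactly reproduces the claimed closed form.

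To prove $(\star)$, I observe that $L_{m+1,n-1}$ contains $L_{m,n}$ as an edge-subgraph, differing only by the $m-1$ extra edges $\{n,\,n+j\}$ for $j = 2, \ldots, m$. Each such edge forms a triangle with $\{n, n+1\}$ and $\{n+1, n+j\}$, both of which already lie in $L_{m,n}$. The plan is to apply the Triple-deletion property (Proposition~\ref{prop:OS}) along these $m-1$ triangles in succession, peeling off one extra edge at each step. The error terms that arise involve graphs in which the path portion on $\{1, \ldots, n\}$ becomes disconnected from (part of) the clique on $\{n+1, \ldots, n+m\}$; these disjoint-union configurations yield chromatic symmetric functions of the form $X_{P_n}({\bf x})\,X_{K_m}({\bf x})$. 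With careful telescoping, all intermediate graphs cancel in pairs, leaving precisely the identity $m\,X_{L_{m,n}} = X_{L_{m+1,n-1}} + (m-1)\,X_{P_n}X_{K_m}$, which is $(\star)$.

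The main obstacle is the combinatorial bookkeeping required by the iterated Triple-deletion: each application spawns three subgraphs, and one must verify that the parade of intermediate graphs collapses so that only the three desired terms survive. Once $(\star)$ is confirmed, the induction closes by the routine substitution and re-indexing described above.
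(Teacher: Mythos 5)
Your overall architecture is sound: the recurrence $(\star)$ is precisely the $q=1$ specialization of Equation \eqref{eqn:lollipop}, and your induction on $n$ together with the coefficient bookkeeping (the re-indexing $j=i-1$, the factor $\tfrac{1}{m}\cdot\tfrac{m!}{(m+n-1)!}=\tfrac{(m-1)!}{(m+n-1)!}$, and the $i=0$ term coming from the correction summand) is correct. Be aware, though, that the paper does not prove this proposition at all --- it is quoted from Dahlberg--van Willigenburg --- and the paper's own route to the lollipop formula is the $q$-analogue (Proposition \ref{prop:quasilollipop}), where \eqref{eqn:lollipop} is derived from Lee's local linear relations on LLT polynomials (Theorems \ref{thm:Lee} and \ref{thm:local}) rather than from triple deletion. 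So your derivation of $(\star)$ is genuinely different from anything in the paper, and as written it has a gap.

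The gap is in the claimed cancellation. With your triangles $e_1=\{n,n+j\}$, $e_2=\{n,n+1\}$, $e_3=\{n+1,n+j\}$, the error term $X_{G-e_2}-X_{G-\{e_1,e_2\}}$ at each step is \emph{not} a disjoint-union contribution of the form $X_{P_n}X_{K_m}$: after deleting $\{n,n+1\}$ the vertex $n$ is still joined to $n+2,\dots,n+j$, so the graph remains connected except at the very last step. A single telescoping pass therefore yields only
$X_{L_{m+1,n-1}}-X_{L_{m,n}}=X_{L_{m+1,n-1}-\{n,n+1\}}-X_{P_n\cup K_m}$, and $L_{m+1,n-1}-\{n,n+1\}$ is isomorphic to the melting lollipop $L^{(1)}_{m+1,n-1}$ --- a fourth graph not appearing in $(\star)$. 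To repair this, set $G_k=L^{(k)}_{m+1,n-1}$ (so $G_0=L_{m+1,n-1}$, $G_{m-1}=L_{m,n}$, $G_m=P_n\cup K_m$) and run the same telescoping inside each $G_k$; using that $G_k-\{n,n+1\}\cong G_{k+1}$ (legitimate at $q=1$, since $X_G$ is an isomorphism invariant and the clique vertices are interchangeable) you obtain $X_{G_k}-X_{G_{k+1}}=X_{G_{m-1}}-X_{G_m}$ for every $k$, and summing over $k=0,\dots,m-1$ gives $mX_{G_{m-1}}=X_{G_0}+(m-1)X_{G_m}$, i.e.\ $(\star)$. Alternatively, apply Proposition \ref{prop:OS} to the triangle on $\{n,n+j\}$, $\{n,n+j'\}$, $\{n+j,n+j'\}$ (two spokes from $n$ plus a clique edge) to get the arithmetic-progression relation $X_{G_k}+X_{G_{k+2}}=2X_{G_{k+1}}$ directly --- this is exactly the $q=1$ shadow of Theorem \ref{thm:Lee} --- and sum. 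Either repair closes the argument; as stated, the ``everything cancels in pairs'' step would fail.
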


In this section we give explicit $e$-positive and $e$-unimodal formulae for chromatic quasisymmetric functions of some graphs, generalizations of lollipop graphs, by using Theorem \ref{thm:local}. 


\subsection{Lollipop graphs}\label{sec:lollipop}

In this subsection we consider lollipop graphs. 

\begin{defn} For $m,n\geq0$, a \emph{lollipop quasisymmetric function} is given by $X_{L_{m,n}}({\bf x};q)$, that is, a chromatic quasisymmetric function of $L_{m,n}$. We simply denote $X_{L_{m,n}}({\bf x};1)$ by $X_{L_{m,n}}({\bf x})$.
\end{defn}

By the definition, $X_{L_{m,0}}({\bf x};q)=X_{K_m}({\bf x};q)$ and $X_{L_{0,n}}({\bf x};q)=X_{P_n}({\bf x};q)$, both of which are $e$-positive and $e$-unimodal, see  Proposition \ref{prop:basic}.\\ 

From Theorem \ref{thm:local} (a$'$), we have the following linear relation.

\begin{prop}
For integers $m\geq 2$ and $n\geq 0$,
\begin{equation}\label{eqn:lollipop} 
X_{L_{m,n}}({\bf x};q)=\frac{1}{[m]_q} \left( X_{L_{m+1,n-1}}({\bf x};q)+q[m-1]_q X_{P_n \cup K_m}({\bf x};q) \right).
\end{equation}
\end{prop}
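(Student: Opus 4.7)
The plan is to deduce equation \eqref{eqn:lollipop} as a direct application of Theorem \ref{thm:local}(a$'$). The theorem is invoked with the area sequence of $L_{m+1,n-1}$, distinguished index $i = n$, and parameters $\ell = m$ and $k = m-1$.

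The natural unit interval order whose incomparability graph is $L_{m+1,n-1}$ has Hessenberg function $m_j = j+1$ for $1 \leq j \leq n-1$ and $m_j = n+m$ for $n \leq j \leq n+m-1$, so its area sequence is
$$a \;=\; (\underbrace{1, 1, \dots, 1}_{n-1},\, m,\, m-1,\, m-2,\, \dots,\, 1,\, 0).$$
With $i = n$ we have $a_i = m$, and the hypotheses of Theorem \ref{thm:local} are readily verified: $a_{n-1} + (\ell - 1) = 1 + (m-1) \leq m = a_n$, while the tail $(a_{n+1}, a_{n+2}, \dots, a_{n+m}) = (m-1, m-2, \dots, 1, 0)$ decreases by exactly $1$ at each step, which is precisely the chain of equalities $a_{n+a_n - j} = a_{n+a_n - j + 1} + 1$ for $j = 1, \dots, m-1$.

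It remains to identify the graphs $G_0$, $G_{m-1}$, $G_m$ under the dictionary between area sequences and incomparability graphs. The cells in row $j$ of the Dyck diagram correspond to the edges $\{j, j+1\}, \{j, j+2\}, \dots, \{j, j+a_j\}$, so decreasing $a_n$ by $z$ removes the $z$ edges $\{n, n+m\}, \{n, n+m-1\}, \dots, \{n, n+m-z+1\}$ from $L_{m+1, n-1}$. Hence $G_0 = L_{m+1, n-1}$; $G_{m-1} = L_{m, n}$, since only the single edge $\{n, n+1\}$ remains connecting vertex $n$ to the clique on $\{n+1, \dots, n+m\}$; and $G_m = P_n \cup K_m$, since vertex $n$ is fully disconnected from that clique. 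Substituting $k = m-1$ into Theorem \ref{thm:local}(a$'$) then produces
$$X_{L_{m+1,n-1}}(\bx;q) + q[m-1]_q\, X_{P_n \cup K_m}(\bx;q) \;=\; [m]_q\, X_{L_{m,n}}(\bx;q),$$
and dividing by $[m]_q$ yields \eqref{eqn:lollipop}. The only step that requires care is tracking the area-sequence-to-graph correspondence to confirm which edges are removed at each stage; once that dictionary is in hand, the identity is just an instance of the linear relation already established in the previous section.
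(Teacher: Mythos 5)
Your proposal is correct and follows essentially the same route as the paper: both apply Theorem \ref{thm:local}(a$'$) to the area sequence of $L_{m+1,n-1}$ with $i=n$, $\ell=m$, $k=m-1$, verify the two hypotheses in the same way, and identify $G_0=L_{m+1,n-1}$, $G_{m-1}=L_{m,n}$, $G_m=P_n\cup K_m$. The extra paragraph you include spelling out the area-sequence-to-edge dictionary is a welcome addition but does not change the argument.
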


\begin{proof}
We first note that the LLT diagram corresponding to the lollipop graph $L_{m+1,n-1}$ has the area sequence $a=(a_1,a_2,\dots,a_{n+m})$ such that $a_{i}=1$ for $i< n$ and $a_{i}=n+m-i$ for $i\geq n$. If we take $\ell=m$ and $i=n$, then the area sequence $a$ satisfies that the condition of Theorem \ref{thm:local}; 
\begin{itemize}
    \item $a_{i-1}+\ell-1=a_{n-1}+m-1=m\leq a_n$,
    \item $a_{i}=n+m-i=a_{i+1}+1$ \qquad for \quad $i\in\{n+1, n+2,\dots,n+m-1\}$.
\end{itemize}
Thus, by Theorem \ref{thm:local} (a$'$), we have
$$X_{L_{m+1,n-1}}({\bf x};q)+q[m-1]_q X_{G_{m}}({\bf x};q)=[m]_q X_{G_{m-1}}({\bf x};q),$$
where $G_{m-1}$ is the lollipop graph $L_{m,n}$ and $G_{m}$ is the graph $P_n \cup K_m$.
\end{proof}

From \eqref{eqn:lollipop}, we have a formula of a lollipop quasisymmetric function.

\begin{prop}\label{prop:quasilollipop}
For $m\geq 2$ and $n\geq 0$, a lollipop quasisymmetric function is
$$X_{L_{m,n}}({\bf x};q)=[m-1]_q!\left([m+n]_q \,e_{m+n}+\sum_{i=0}^{n-1}q[m+i-1]_q X_{P_{n-i}}({\bf x};q)\,e_{m+i}\right).$$
Consequently $X_{L_{m,n}}({\bf x};q)$ is $e$-positive and $e$-unimodal with center of symmetry $\frac{|E(L_{m,n})|}{2}$.
\end{prop}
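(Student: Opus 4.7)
The plan is to establish the closed formula by induction on $n$ and then read off $e$-positivity and $e$-unimodality directly from the expression. For the base case $n=0$, the graph $L_{m,0}$ coincides with $K_m$ and the right-hand side collapses to $[m-1]_q!\cdot[m]_q\,e_m=[m]_q!\,e_m$, which matches $X_{K_m}(\mathbf{x};q)$ by Proposition \ref{prop:basic}(a). For the inductive step I would combine the recurrence \eqref{eqn:lollipop} with the factorization $X_{P_n\cup K_m}(\mathbf{x};q)=X_{P_n}(\mathbf{x};q)\cdot X_{K_m}(\mathbf{x};q)=[m]_q!\,X_{P_n}(\mathbf{x};q)\,e_m$ (disjoint union multiplies chromatic quasisymmetric functions) to rewrite \eqref{eqn:lollipop} as
\[
X_{L_{m,n}}(\mathbf{x};q)=\tfrac{1}{[m]_q}X_{L_{m+1,n-1}}(\mathbf{x};q)+[m-1]_q!\cdot q[m-1]_q\, X_{P_n}(\mathbf{x};q)\,e_m.
\]
Inserting the inductive hypothesis for $X_{L_{m+1,n-1}}$, cancelling one factor of $[m]_q$ out of the outer $[m]_q!$ to recover $[m-1]_q!$, and reindexing $j=i+1$ shifts the summation range from $\{0,\ldots,n-2\}$ to $\{1,\ldots,n-1\}$; the extra additive term then supplies exactly the missing $j=0$ summand.

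For $e$-positivity I would simply observe that every ingredient in the closed formula lies in $\mathbb{N}[q]$: the $q$-integers and $[m-1]_q!$ by inspection, and each $e_\lambda$-coefficient of $X_{P_{n-i}}(\mathbf{x};q)$ by the explicit expression in Proposition \ref{prop:basic}(b).

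For the $e$-unimodality with center of symmetry $|E(L_{m,n})|/2=(\binom{m}{2}+n)/2$, the plan is to show that every $e_\mu$-coefficient $c_\mu(q)$ of $X_{L_{m,n}}(\mathbf{x};q)$ is a palindromic unimodal polynomial in $\mathbb{N}[q]$ centered at this value; $e$-unimodality of the coefficient sequence $\{[q^j]X_{L_{m,n}}\}_j$ then follows immediately. The essential tool is Stanley's classical result that a product of palindromic unimodal polynomials with nonnegative coefficients is itself palindromic unimodal, with center equal to the sum of the individual centers, and that sums of such polynomials sharing a common center retain both properties. Using the identity $\binom{m-1}{2}+m=\binom{m}{2}+1$, the lone term $[m-1]_q![m+n]_q$ has center $(\binom{m-1}{2}+m+n-1)/2=|E(L_{m,n})|/2$; for the $i$-th summand, the four factors $q$, $[m-1]_q!$, $[m+i-1]_q$, and $[e_\lambda]X_{P_{n-i}}(\mathbf{x};q)$ have centers $1$, $\binom{m-1}{2}/2$, $(m+i-2)/2$, and $(n-i-1)/2$ respectively, and these sum to $|E(L_{m,n})|/2$ independently of $i$.

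The main obstacle is verifying that each individual $e_\lambda$-coefficient of $X_{P_{n-i}}(\mathbf{x};q)$ (not merely $X_{P_{n-i}}(\mathbf{x};q)$ itself) is palindromic unimodal about the common center $(n-i-1)/2$. This reduces to showing that every summand $q^{m'-1}\prod_j[k_j-1]_q$ from Proposition \ref{prop:basic}(b) has center $(m'-1)+\sum_j(k_j-2)/2=(n-i-1)/2$ regardless of the composition $(k_1,\ldots,k_{m'})$, so that the summands contributing to a fixed $e_\lambda$-coefficient all share this center and their sum remains palindromic unimodal. Once this invariance of center is secured, the arithmetic above forces every $c_\mu(q)$ to be centered at $|E(L_{m,n})|/2$, and the $e$-unimodality claim closes.
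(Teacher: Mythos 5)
Your proposal is correct and follows essentially the same route as the paper: both derive the closed formula by iterating the recurrence \eqref{eqn:lollipop} (your induction on $n$ is just the paper's repeated application run in reverse), using $X_{P_n\cup K_m}=X_{P_n}\cdot X_{K_m}=[m]_q!\,X_{P_n}\,e_m$, and both deduce $e$-unimodality by checking that every summand is palindromic unimodal about the common center $|E(L_{m,n})|/2$. Your only addition is spelling out the (correct, and easily verified) fact that each composition summand $q^{m'-1}\prod_j[k_j-1]_q$ in Proposition \ref{prop:basic}(b) is centered at $(n-i-1)/2$, a point the paper leaves implicit.
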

\begin{proof}
If we use Equation (\ref{eqn:lollipop}) repeatedly, then we have a refinement of Proposition \ref{prop:lollipop};
$$X_{L_{m,n}}({\bf x};q)=\frac{X_{L_{m+k,n-k}}({\bf x};q)}{[m]_q\cdots[m+k-1]_q}+\sum_{i=0}^{k-1}\frac{q[m+i-1]_q}{[m]_q\cdots[m+i]_q} X_{P_{n-i}}({\bf x};q) X_{K_{m+i}}({\bf x};q).$$
If we let $k=n$, then $X_{L_{m+n,0}}({\bf x};q)=X_{k_{m+n}}({\bf x};q)$. Since $X_{K_{n}}({\bf x};q)=[n]_q! \,e_n$, 
\begin{eqnarray*}
X_{L_{m,n}}({\bf x};q)&=&\frac{[m+n]_q! \,e_{m+n}}{[m]_q\cdots[m+n]_q}+\sum_{i=0}^{n-1}\frac{q[m+i-1]_q}{[m]_q\cdots[m+i]_q}[m+i]_q!X_{P_{n-i}}({\bf x};q) \,e_{m+i}\\
&=&[m-1]_q!\left([m+n]_q \, e_{m+n}+\sum_{i=0}^{n-1}q[m+i-1]_q X_{P_{n-i}}({\bf x};q) \,e_{m+i}\right).
\end{eqnarray*}
By Proposition \ref{prop:basic} (b), $X_{P_{n-i}}({\bf x};q)$ is $e$-positive and $e$-unimodal with center of symmetry $\frac{n-i-1}{2}$. Since both of $[m+n]_q$ and $q[m+i-1]_q X_{P_{n-i}}({\bf x};q)$ are $e$-positive and $e$-unimodal with center of symmetry $\frac{n+m-1}{2}$, $X_{L_{m,n}}({\bf x};q)$ is $e$-positive and $e$-unimodal with center of symmetry $\frac{|E(L_{m,n})|}{2}$.
\end{proof}

In \cite{CH}, the first author and Cho obtained an $e$-positive and $e$-unimodal formula of the chromatic quasisymmetric function of $K_{r}+K_{n-r+1}$.

\begin{lem}\cite[Corollary 4.4]{CH}\label{lem:CH}
For $1\leq r \leq n-1$, let $G$ be the graph $K_{r}+K_{n-r+1}$. Then  
$$X_G({\bf x};q)=\sum_{i=0}^{\min \{n-r,r-1\}}q^{i}[n-r]_q![r-1]_q![n-2i]_q\, e_{(n-i,i)}.$$
\end{lem}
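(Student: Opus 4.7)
My plan is to compute $X_G(\bx;q)$ directly from its definition and then evaluate the resulting sum via a telescoping identity. A proper coloring $\kappa$ of $G = K_r + K_{n-r+1}$ is specified by the color $c := \kappa(v_r)$ of the shared vertex together with a pair of bijections: one from $\{v_1,\dots,v_{r-1}\}$ onto a set $A\subseteq\mathbb{Z}_{>0}\setminus\{c\}$ of size $r-1$, and one from $\{v_{r+1},\dots,v_n\}$ onto a set $B\subseteq\mathbb{Z}_{>0}\setminus\{c\}$ of size $n-r$. The statistic $\operatorname{asc}(\kappa)$ splits as ascents within the $K_{r-1}$ on $\{v_1,\dots,v_{r-1}\}$, $|A\cap[c-1]|$ ascents from the star edges $\{v_i,v_r\}$ with $i<r$, $|B\cap[c+1,\infty)|$ ascents from the star edges $\{v_r,v_j\}$ with $j>r$, and ascents within the $K_{n-r}$ on $\{v_{r+1},\dots,v_n\}$. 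Summing over the two bijections invokes the Mahonian distribution and contributes factors of $[r-1]_q!$ and $[n-r]_q!$, yielding
\[
X_G(\bx;q) \;=\; [r-1]_q!\,[n-r]_q!\sum_{c,A,B} q^{|A\cap[c-1]|+|B\cap[c+1,\infty)|}\,x_c\,x^A\,x^B.
\]

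I would then recognise the inner sum as a coefficient extraction. Let $E(u) := \prod_k(1+ux_k) = \sum_{i\ge 0} u^i e_i$; the inner sum equals $[y^{r-1}z^{n-r}]\,T(y,z)$, where
\[
T(y,z) \;=\; \sum_{c\ge 1} x_c \prod_{k<c}(1+yqx_k)(1+zx_k)\prod_{k>c}(1+yx_k)(1+zqx_k).
\]
Setting $\psi(x) := (1+yqx)(1+zx)$ and $\phi(x) := (1+yx)(1+zqx)$, a direct expansion yields the pivotal identity $\phi(x) - \psi(x) = x(1-q)(y-z)$. Writing $A_c := \prod_{k<c}\psi(x_k)\prod_{k\ge c}\phi(x_k)$, this identity makes the sum telescope: $T(y,z) = \frac{1}{(1-q)(y-z)}\sum_c(A_c - A_{c+1}) = \frac{E(y)E(zq) - E(yq)E(z)}{(1-q)(y-z)}$.

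To finish, I would extract the coefficient of $y^{r-1}z^{n-r}$ from $T(y,z)$. Expanding gives $E(y)E(zq) - E(yq)E(z) = \sum_{a,b}(q^b - q^a)y^a z^b\,e_a e_b$, and pairing the $(a,b)$ with $(b,a)$ term collapses this to $\sum_{a<b}(q^b-q^a)(y^a z^b - y^b z^a)\,e_a e_b$. Using the elementary identities $(q^b-q^a)/(1-q) = -q^a[b-a]_q$ and $(y^a z^b - y^b z^a)/(y-z) = -\sum_{k=0}^{b-a-1}y^{b-1-k}z^{a+k}$ for $a<b$, the expression for $T(y,z)$ becomes $\sum_{a<b}q^a[b-a]_q \bigl(\sum_{k=0}^{b-a-1}y^{b-1-k}z^{a+k}\bigr) e_a e_b$. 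Reading off the coefficient of $y^{r-1}z^{n-r}$ forces $a+b=n$ together with $0\le a\le\min\{r-1,n-r\}$, so the inner sum equals $\sum_{i=0}^{\min\{n-r,r-1\}}q^i[n-2i]_q\,e_{(n-i,i)}$, which proves the formula after multiplying by $[r-1]_q![n-r]_q!$. The main hurdle is spotting the identity $\phi-\psi = x(1-q)(y-z)$ that triggers the telescoping; everything afterward is routine coefficient extraction.
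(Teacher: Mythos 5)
This lemma is not proved in the paper at all: it is imported verbatim from Cho--Huh \cite{CH} (their Corollary 4.4), so there is no in-paper argument to compare against. Your proof is correct and self-contained, which is a genuine addition. The decomposition of a proper coloring of $K_r+K_{n-r+1}$ into the colour $c$ of the shared vertex, the colour sets $A,B$ and the two bijections is right, and the four contributions to $\mathrm{asc}$ factor exactly as you claim: the within-clique ascents depend only on the bijections and sum to $[r-1]_q!$ and $[n-r]_q!$ (on a complete clique the ascent number is $\binom{m}{2}-\mathrm{inv}$, hence again Mahonian), while the star-edge ascents $|A\cap[c-1]|$ and $|B\cap[c+1,\infty)|$ depend only on the sets and on $c$. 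The pivotal identity $\phi(x)-\psi(x)=x(1-q)(y-z)$ checks out, the telescoping $\sum_{c\ge1}(A_c-A_{c+1})=E(y)E(zq)-E(yq)E(z)$ is valid coefficientwise in the ring of formal power series, and the final extraction of $[y^{r-1}z^{n-r}]$ correctly forces $a+b=n$ with $0\le a\le\min\{r-1,n-r\}$ (which already implies $a<b$), giving $\sum_i q^i[n-2i]_q\,e_{(n-i,i)}$; the small cases $(n,r)=(2,1)$ and $(3,2)$ agree with Proposition \ref{prop:basic}. The only stylistic caveat is that the limit $A_c\to E(yq)E(z)$ as $c\to\infty$ deserves the one-line justification that each monomial coefficient stabilizes, but that is routine. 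Relative to simply citing \cite{CH}, your generating-function/telescoping argument buys an elementary, fully verifiable derivation that uses nothing beyond the definition of $X_G({\bf x};q)$ and the Mahonian distribution.
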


By combining Theorem \ref{thm:local} and Lemma \ref{lem:CH}, we obtain a formula of the chromatic quasisymmetric function of a graph $K_{r}+L_{m,n}$.
  
\begin{thm} \label{thm:double}
For $m\geq 3$, $1\leq r \leq m$, and $n\geq 0$, let $G$ be the graph $K_{r}+L_{m,n}$. If we let $d=n+m+r-1$, then
$$X_{G}({\bf x};q)=[m-1]_q!\left( \sum_{i=0}^{r-1}q^{i}[r-1]_q![d-2i]_q \,e_{(d-i,i)}  +\sum_{j=0}^{n-1}q[n+m-j-2]_q X_{L_{r,j}}({\bf x};q) \,e_{n+m-j-1}\right).$$
Hence, $X_{G}({\bf x};q)$ is $e$-positive and $e$-unimodal with center of symmetry $\frac{|E(G)|}{2}$.
\end{thm}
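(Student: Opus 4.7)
The plan is to mimic the proof of Proposition~\ref{prop:quasilollipop} by iterating Theorem~\ref{thm:local}(a$'$) and then closing the base case via Lemma~\ref{lem:CH}.

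First I would establish the one-step recursion
\begin{equation*}
X_{K_r+L_{m,n}}({\bf x};q) = \tfrac{1}{[m]_q}\bigl(X_{K_r+L_{m+1,n-1}}({\bf x};q) + q[m-1]_q\, X_{L_{r,n-1}}({\bf x};q)\,X_{K_m}({\bf x};q)\bigr)
\end{equation*}
by labeling the vertices of $K_r+L_{m+1,n-1}$ so that $K_r$ occupies $\{1,\ldots,r\}$, the path $P_n$ occupies $\{r,\ldots,r+n-1\}$, and $K_{m+1}$ occupies $\{r+n-1,\ldots,r+n+m-1\}$. The resulting area sequence has $a_i = r-i$ for $i<r$, $a_i = 1$ for $r\le i\le r+n-2$, $a_{r+n-1}=m$, and $a_{r+n-1+s}=m-s$ for $0\le s\le m$, so both hypotheses of Theorem~\ref{thm:local} are trivially verified at $i=r+n-1$, $\ell=m$. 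Applying part~(a$'$) with $k=m-1$, one identifies $G_0=K_r+L_{m+1,n-1}$, $G_{m-1}=K_r+L_{m,n}$, and $G_m=L_{r,n-1}\sqcup K_m$ (the last graph being a disjoint union, so its chromatic quasisymmetric function factors), yielding the recursion.

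Iterating this recursion $n$ times terminates at $K_r+L_{m+n,0}=K_r+K_{m+n}$, producing
\begin{equation*}
X_{K_r+L_{m,n}}({\bf x};q) = \frac{X_{K_r+K_{m+n}}({\bf x};q)}{[m]_q\cdots[m+n-1]_q} + \sum_{j=0}^{n-1}\frac{q[m+j-1]_q\,X_{L_{r,n-j-1}}({\bf x};q)\,X_{K_{m+j}}({\bf x};q)}{[m]_q\cdots[m+j]_q}.
\end{equation*}
Substituting $X_{K_{m+j}}=[m+j]_q!\,e_{m+j}$ from Proposition~\ref{prop:basic}(a), expanding $X_{K_r+K_{m+n}}$ via Lemma~\ref{lem:CH} (where $\min\{m+n-1,r-1\}=r-1$ since $r\le m$), and using the telescoping $[m+j]_q!/([m]_q\cdots[m+j]_q)=[m-1]_q!$ exhibits a common factor $[m-1]_q!$. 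A reindexing $j\mapsto n-j-1$ in the second sum converts it into the claimed form.

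For the $e$-positivity and $e$-unimodality with center $|E(G)|/2$, each summand inside the brackets is a product of factors which are individually $e$-positive and palindromic-unimodal in $q$: the factorial $[r-1]_q!$, the $q$-integer $[d-2i]_q$ or $[n+m-j-2]_q$, the monomial $q^i$ or $q$, and either $e_{(d-i,i)}$ or $X_{L_{r,j}}({\bf x};q)\,e_{n+m-j-1}$ (where $X_{L_{r,j}}$ is palindromic $e$-unimodal by Proposition~\ref{prop:quasilollipop}). A direct bookkeeping of the centers of palindromy shows that the centers of the factors sum, in both cases, to $(\binom{r}{2}+m+n-1)/2$; tacking on the common prefactor $[m-1]_q!$ of center $\binom{m-1}{2}/2$ yields a total center of $|E(G)|/2=(\binom{r}{2}+\binom{m}{2}+n)/2$. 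Since products of palindromic $e$-unimodal polynomials sharing a common center remain palindromic $e$-unimodal with that center, the full expression is $e$-positive and $e$-unimodal with center $|E(G)|/2$. The main obstacle is the verification in Step~1: correctly pinning down the hypotheses of Theorem~\ref{thm:local} at $(i,\ell)=(r+n-1,m)$ and identifying $G_{m-1}$ and $G_m$ after the $(m-1)$-fold deletion; once this recursion is in hand, the iteration is mechanical and the unimodality argument is routine bookkeeping.
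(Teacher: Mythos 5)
Your proposal is correct and follows essentially the same route as the paper: the one-step recursion you derive from Theorem~\ref{thm:local}(a$'$) is exactly the paper's equation relating $X_{K_r+L_{m,n}}$ to $X_{K_r+L_{m+1,n-1}}$ and $X_{L_{r,n-1}}X_{K_m}$, and the paper likewise iterates it down to the base case $K_r+K_{m+n}$ handled by Lemma~\ref{lem:CH}, with the same telescoping of $q$-factorials. The only differences are cosmetic: you unwind the recursion from $L_{m,n}$ toward $K_{m+n}$ rather than building up from $k=0$, and you spell out the area-sequence verification and the center-of-symmetry bookkeeping in more detail than the paper does.
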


\begin{proof}
By Lemma \ref{lem:CH}, 
$$X_{K_{r}+L_{m+n,0}}({\bf x};q)=X_{K_{r}+K_{m+n}}({\bf x};q)=[n+m-1]_q!\sum_{i=0}^{r-1}q^{i}[r-1]_q![d-2i]_q \,e_{(d-i,i)}.$$
For convenience, we denote $X_{K_{r}+K_{m+n}}({\bf x};q)=[n+m-1]_q! \,f({\bf x};q)$.\\

From Theorem \ref{thm:local} (a$'$), we have the following relation for $0\leq k \leq n+m-3$,

\begin{equation}\label{eqn:double}
X_{K_{r}+L_{n+m-k-1,k+1}}({\bf x};q)=\frac{X_{K_{r}+L_{n+m-k,k}}({\bf x};q)+q[n+m-k-2]_q X_{L_{r,k}}({\bf x};q)X_{K_{n+m-k-1}}({\bf x};q)}{[n+m-k-1]_q}.
\end{equation}
\smallskip
If $k=0$, then \eqref{eqn:double} is equal to 
\begin{eqnarray*}
X_{K_{r}+L_{n+m-1,1}}({\bf x};q)&=&\frac{X_{K_{r}+L_{n+m,0}}({\bf x};q)+q[n+m-2]_q X_{L_{r,0}}({\bf x};q)X_{K_{n+m-1}}({\bf x};q)}{[n+m-1]_q}\\
&=&[n+m-2]_q!\left(f({\bf x};q)+q[n+m-2]_q X_{L_{r,0}}({\bf x};q) \,e_{n+m-1}\right).
\end{eqnarray*}
\smallskip
If $k=1$, then \eqref{eqn:double} is equal to 
\begin{eqnarray*}
X_{K_{r}+L_{n+m-2,2}}({\bf x};q)&=&\frac{X_{K_{r}+L_{n+m-1,1}}({\bf x};q)+q[n+m-3]_qX_{L_{r,1}}({\bf x};q)X_{K_{n+m-2}}({\bf x};q)}{[n+m-2]_q}\\
&=&[n+m-3]_q!\left(f({\bf x};q)+\sum_{j=0}^{1}q[n+m-j-2]_q X_{L_{r,j}}({\bf x};q)\,e_{n+m-j-1}\right).
\end{eqnarray*}
If we do this continually, then we have 
\begin{eqnarray*}
&&X_{K_{r}+L_{n+m-k-1,k+1}}({\bf x};q)\\
&&\hspace{30mm}=\frac{X_{K_{r}+L_{n+m-k,k}}({\bf x};q)+q[n+m-2]_q X_{L_{r,k}}({\bf x};q)X_{K_{n+m-k-1}}({\bf x};q)}{[n+m-k-1]_q}\\
&&\hspace{30mm}=[n+m-k-2]_q!\left(f({\bf x};q)+\sum_{j=0}^{k}q[n+m-j-2]_q X_{L_{r,j}}({\bf x};q) \,e_{n+m-j-1}\right),
\end{eqnarray*}
for $0\leq k\leq n-1$. In particular, when $k=n-1$ we get
\begin{eqnarray*}
X_{K_{r}+L_{m,n}}({\bf x};q)&=&\frac{X_{K_{r}+L_{m+1,n-1}}({\bf x};q)+q[m-1]_q X_{L_{r,n-1}}({\bf x};q)X_{K_{m}}({\bf x};q)}{[m]_q}\\
&=&[m-1]_q!\left(f({\bf x};q)+\sum_{j=0}^{n-1}q[n+m-j-2]_q X_{L_{r,j}}({\bf x};q) \,e_{n+m-j-1}\right),
\end{eqnarray*}
as we desired.
\end{proof}


\subsection{Melting lollipop graphs} \label{melting}


\begin{defn}
For integers $m,n\geq 0$, and $0\leq k \leq m-1$, a \emph{melting lollipop graph} $L^{(k)}_{m,n}$ on $[m+n]$ is obtained from the lollipop graph $L_{m,n}$ by deleting $k$ edges, 
$$\{n+1,n+m\}, ~\{n+1,n+m-1\},~ \dots,~ \{n+1,n+m-k+1\}.$$ 
We call $X_{L^{(k)}_{m,n}}({\bf x};q)$ the \emph{melting lollipop quasisymmetric function}. 
\end{defn}

A melting lollipop graph $L^{(k)}_{m,n}$ is the incomparability graph of a natural unit interval order $P(m_1,m_2,\dots,m_{m+n-1})$ with $m_i=i+1$ for $i\leq n$, $m_{n+1}=m+n-k$, and $m_{i}=m+n$ for $i>n+1$. Figure \ref{fig:melting} shows the melting lollipop graph $L^{(2)}_{6,5}$, for example.

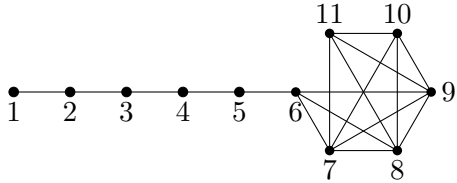
\begin{figure}[ht]
\begin{tikzpicture}[scale=.15]
\foreach \n in {6}
 \foreach \i in {1,...,\n}
    \fill (\i*360/\n:\n) coordinate (n\i) circle(2*\n pt)
      \ifnum \i>3 foreach \j in{\i,...,1}{(n\i)edge(n\j)}\fi;
\draw (180:6)--(-31,0);
\draw (60:6) -- (120:6);

\foreach \i in {1,...,5}
 \node[below] at (-36+5*\i,0) {\i}; 

\node[below] at (180:6) {6};
\node[below] at (240:6) {7};
\node[below] at (300:6) {8};
\node[right] at (0:6) {9};
\node[above] at (60:6) {10};
\node[above] at (120:6) {11};

\filldraw [black] (-11,0) circle (12pt)
			(-16,0) circle (12pt)
			(-21,0) circle (12pt)
			(-26,0) circle (12pt)
			(-31,0) circle (12pt);
\end{tikzpicture}
\caption{The incomparability graph $L^{(2)}_{6,5}$ of $P=P(2,3,4,5,6,9,11,11,11,11)$.}\label{fig:melting}
\end{figure}

By the definition, one can easily see that $L^{(0)}_{m,n}=L_{m,n}$, $L^{(m-2)}_{m,n}=L_{m-1,n+1}$, and $L^{(m-1)}_{m,n}$ is the disjoint union of $P_{n+1}$ and $K_{m-1}$. Therefore, from Theorem \ref{thm:local} (b$'$), we have the following relation,
\begin{equation}\label{eqn:mlollipoplr}
[m-k-1]_q X_{L_{m,n}}({\bf x};q)+q^{m-k-1}[k]_q X_{P_{n+1}\cup K_{m-1}}({\bf x};q)=[m-1]_q X_{L^{(k)}_{m,n}}({\bf x};q),
\end{equation}
which is equivalent to the following proposition.

\begin{prop}
\begin{equation}\label{eqn:melting}
X_{L^{(k)}_{m,n}}({\bf x};q)=\frac{[m-k-1]_q}{[m-1]_q}X_{L_{m,n}}({\bf x};q)+q^{m-k-1}[k]_q[m-2]_q!X_{P_{n+1}}({\bf x};q) \,e_{m-1}.
\end{equation}
\end{prop}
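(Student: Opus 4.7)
The plan is to obtain (\ref{eqn:melting}) as a direct consequence of (\ref{eqn:mlollipoplr}), which is itself the $i=n+1$, $\ell=m-1$ case of Theorem \ref{thm:local}(b$'$). The starting observation is that $L_{m,n}$, being the incomparability graph of $P(2,3,\ldots,n+1,n+m,\ldots,n+m)$, has area sequence
$$a=(\underbrace{1,1,\ldots,1}_{n},\, m-1,\, m-2,\, \ldots,\, 1,\, 0),$$
obtained by counting, for each vertex, its neighbors of larger label. The melting operation removing the $k$ edges $\{n+1,n+m\},\ldots,\{n+1,n+m-k+1\}$ deletes exactly the $k$ largest neighbors of vertex $n+1$, so the area sequence of $L^{(k)}_{m,n}$ agrees with $a$ except that the $(n+1)$-th entry drops from $m-1$ to $m-1-k$; in particular, the extreme case $k=m-1$ severs vertex $n+1$ from the clique $\{n+2,\ldots,n+m\}$ and identifies $L^{(m-1)}_{m,n}$ with the vertex-disjoint union $P_{n+1}\cup K_{m-1}$.

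Next I would verify the hypotheses of Theorem \ref{thm:local}(b$'$) at $i=n+1$, $\ell=m-1$. The inequality $a_{i-1}+\ell-1\le a_i$ reads $1+(m-2)\le m-1$, holding with equality, and the chain $a_{i+a_i-j}=a_{i+a_i-j+1}+1$ for $1\le j\le\ell-1$ simply records that the tail $(m-1,m-2,\ldots,1,0)$ of $a$ has consecutive differences $-1$. Thus (b$'$) produces exactly (\ref{eqn:mlollipoplr}).

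Finally, since proper colorings of a vertex-disjoint union factor across components and $\mathrm{asc}$ is additive, one has $X_{P_{n+1}\cup K_{m-1}}({\bf x};q)=X_{P_{n+1}}({\bf x};q)\cdot X_{K_{m-1}}({\bf x};q)$, and Proposition \ref{prop:basic}(a) gives $X_{K_{m-1}}({\bf x};q)=[m-1]_q!\,e_{m-1}$. Dividing (\ref{eqn:mlollipoplr}) by $[m-1]_q$ and substituting these identities collapses the second term to $q^{m-k-1}[k]_q[m-2]_q!\,X_{P_{n+1}}({\bf x};q)\,e_{m-1}$, yielding (\ref{eqn:melting}). The only step requiring genuine thought is the bookkeeping in the first paragraph---reading off the area sequence and tracking how the melting operation modifies it; everything else is formal manipulation.
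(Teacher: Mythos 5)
Your proposal is correct and follows essentially the same route as the paper: the paper obtains the displayed identity by specializing Theorem \ref{thm:local}(b$'$) to get the relation \eqref{eqn:mlollipoplr} (using $L^{(0)}_{m,n}=L_{m,n}$ and $L^{(m-1)}_{m,n}=P_{n+1}\cup K_{m-1}$) and then dividing by $[m-1]_q$ after factoring $X_{P_{n+1}\cup K_{m-1}}=X_{P_{n+1}}\cdot[m-1]_q!\,e_{m-1}$. Your write-up merely supplies the area-sequence bookkeeping and hypothesis verification that the paper leaves implicit.
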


\smallskip
From \eqref{eqn:melting} and Proposition \ref{prop:quasilollipop}, we have formulae of melting lollipop quasisymmetric functions.

\begin{thm}\label{thm:melting}
For integers $m\geq2$, $n\geq0$, and $0\leq k\leq m-1$, a melting lollipop quasisymmetric function is
$$X_{L^{(k)}_{m,n}}({\bf x};q)=[m-k-1]_q [m-2]_q!\left( [m+n]_q \,e_{m+n} +\sum_{i=0}^{n-1}q[m+i-1]_q X_{P_{n-i}}({\bf x};q)\,e_{m+i}  \right)$$ 
\hspace{97mm}$+q^{m-k-1}[k]_q [m-2]_q!  X_{P_{n+1}}({\bf x};q)\,e_{m-1}$.\\[2ex]
Consequently $X^{(k)}_{L_{m,n}}({\bf x};q)$ is $e$-positive and $e$-unimodal with center of symmetry $\frac{|E(L^{(k)}_{m,n})|}{2}$.
\end{thm}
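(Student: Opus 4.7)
The plan is to prove Theorem 4.5 by a direct substitution argument: use the already-established formula \eqref{eqn:melting} for $X_{L^{(k)}_{m,n}}(\bx;q)$ and plug in the lollipop formula from Proposition \ref{prop:quasilollipop}. Concretely, rewrite
\[
\frac{[m-k-1]_q}{[m-1]_q}\,X_{L_{m,n}}(\bx;q)
=\frac{[m-k-1]_q}{[m-1]_q}\cdot[m-1]_q!\left([m+n]_q\,e_{m+n}+\sum_{i=0}^{n-1}q[m+i-1]_q\,X_{P_{n-i}}(\bx;q)\,e_{m+i}\right),
\]
observe the cancellation $\frac{[m-k-1]_q}{[m-1]_q}\cdot[m-1]_q!=[m-k-1]_q[m-2]_q!$, and add the extra term $q^{m-k-1}[k]_q[m-2]_q!\,X_{P_{n+1}}(\bx;q)\,e_{m-1}$ from \eqref{eqn:melting}. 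This immediately yields the stated closed form.

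For $e$-positivity, every factor appearing in front of an $e_\lambda$ in either summand is a product of $q$-integers $[n]_q$, powers of $q$, and the chromatic quasisymmetric functions $X_{P_{n-i}}$ and $X_{P_{n+1}}$, which are themselves $e$-positive with nonnegative integer $q$-coefficients by Proposition \ref{prop:basic}(b). Hence the entire expression is a nonnegative $\mathbb{Z}_{\ge 0}[q]$-combination of the $e$-basis.

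The main step that needs a careful bookkeeping argument is $e$-unimodality. The strategy is to show that each of the two summands, call them $A$ and $B$, is $e$-unimodal, and, crucially, that both have the same center of symmetry equal to $|E(L^{(k)}_{m,n})|/2=\tfrac{1}{2}\!\left(\binom{m}{2}+n-k\right)$. For $A=[m-k-1]_q[m-2]_q!\bigl(\cdots\bigr)$, I note that the bracketed sum is palindromic and $e$-unimodal with center $(m+n-1)/2$ by the argument already used in the proof of Proposition \ref{prop:quasilollipop} (each piece $[m+n]_q$ and $q[m+i-1]_q\,X_{P_{n-i}}$ shares this center), while $[m-k-1]_q$ contributes center $(m-k-2)/2$ and $[m-2]_q!$ contributes center $\binom{m-2}{2}/2$; adding these three centers gives $(\binom{m}{2}+n-k)/2$ after a short algebraic simplification. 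For $B=q^{m-k-1}[k]_q[m-2]_q!\,X_{P_{n+1}}(\bx;q)\,e_{m-1}$, the combined center is $(m-k-1)+(k-1)/2+\binom{m-2}{2}/2+n/2$, and the same routine simplification shows this again equals $(\binom{m}{2}+n-k)/2$.

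Since products of palindromic, unimodal, nonnegative-coefficient $q$-polynomials with a common center of symmetry remain palindromic and unimodal with that center, both $A$ and $B$ are $e$-unimodal with the same center; their sum therefore inherits $e$-unimodality centered at $|E(L^{(k)}_{m,n})|/2$, completing the proof. The only real obstacle is the center-of-symmetry bookkeeping for $B$, where the shift $q^{m-k-1}$ must be handled correctly so that the centers of $A$ and $B$ coincide; all other steps are essentially substitution and factorization.
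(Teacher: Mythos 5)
Your proposal is correct and follows exactly the route the paper takes: Theorem \ref{thm:melting} is obtained there simply by substituting Proposition \ref{prop:quasilollipop} into \eqref{eqn:melting} and cancelling $[m-1]_q$, with the $e$-positivity and $e$-unimodality following from the same center-of-symmetry bookkeeping used in the proof of Proposition \ref{prop:quasilollipop}. Your explicit verification that both summands share the center $\tfrac{1}{2}\bigl(\binom{m}{2}+n-k\bigr)$ is accurate and merely spells out details the paper leaves implicit.
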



\section{Schur expansion of unicellular LLT polynomials related to certain graphs}\label{sec:LLT_Schur}


Due to the equivalency given in Proposition \ref{prop:AP} between LLT polynomials and chromatic quasisymmetric functions, the relations satisfied by 
$X_G (\bx ;q)$ given in Section \ref{sec:lollipopG} can be restated in terms of LLT polynomials. In this section, we utilize those relations to prove 
Schur expansion formulas of LLT polynomials corresponding to certain graphs. 
We first define a statistic which will be used in the description of Schur coefficients of LLT polynomials.

Given a partition $\lambda$, we define the \emph{reading order} as the total ordering of the cells in $\lambda$ by reading them row by row, from top 
to bottom, and from left to right within each row. Then for a standard Young tableau $T\in \SYT(\lambda)$, the \emph{descent set} $D(T)$ is defined by 
$$D(T)=\{i~:~ T^{-1}(i+1) \text{ precedes } T^{-1}(i) \text{ in the reading order}\}.$$

\begin{defn}\label{def:wt}
Given a unicellular LLT diagram $\bdnu = (\nu^{(1)},\dots, \nu^{(n)})$, an $n$-tuple of single cells, say the corresponding Dyck diagram $\pi_{\bdnu}$
has the area sequence $a=(a_1,\dots a_{n-1},0)$. Then for any partition $\lambda\vdash n$ and a standard Young tableau $T\in\SYT(\lambda)$, define 
$$wt_{\bdnu}(T) = \sum_{i \in D(T)}a_i.$$
\end{defn}


Our purpose is
to give a combinatorial interpretation of Schur coefficients appearing in the Schur expansion of unicellular LLT polynomials 
in terms of $q$ polynomials weighted by the statistic in Definition \ref{def:wt}.
More precisely,
we will introduce certain classes of unicellular LLT diagrams  $\bdnu = (\nu^{(1)},\dots, \nu^{(n)})$ satisfying that 
\[
\LLT_{\bdnu} ({\bf x};q) =  \sum_{\lambda \, \vdash n} \left( \sum_{P \in \SYT(\lambda)} q^{wt_{\bdnu}(P)} \right) s_\lambda \, .
\]

To begin with,
we introduce the following lemma which will play a key role in proving the Schur coefficients of LLT polynomials in the rest of this section. 

\begin{lem}\label{lem:prod_of_LLT}
Let $G_1$ and $G_2$ be graphs of order $n$ and $m$  associated to the LLT diagram
$\bdnu_1$ and $\bdnu_2$, respectively.
Let $G$ be the graph $G_1 \cup G_2$ of order $(n+m)$ and
let $\bdnu$ be the LLT diagram corresponding to $G$.
If
\[
\LLT_{\bdnu_1}({\bf x};q) = \sum_{\lambda \, \vdash n} \left( \sum_{P \in \SYT(\lambda)} q^{wt_{\bdnu_1}(P)} \right) s_\lambda
\quad \text{and} \quad
\LLT_{\bdnu_2}({\bf x};q) = \sum_{\lambda \, \vdash m} \left( \sum_{Q \in \SYT(\lambda)} q^{wt_{\bdnu_2}(Q)} \right) s_\lambda \, ,
\]
then
\[
\LLT_{\bdnu}({\bf x};q) = \sum_{\lambda \, \vdash (n+m)} \left( \sum_{T \in \SYT(\lambda)} q^{wt_{\bdnu}(T)} \right) s_\lambda \, .
\]
\end{lem}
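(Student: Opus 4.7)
The plan is to combine a factorization of $\LLT_\bdnu$ under disjoint union with the Littlewood--Richardson rule, and then close the argument with two classical properties of jeu de taquin on skew standard Young tableaux.

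I would first establish the factorization $\LLT_\bdnu = \LLT_{\bdnu_1} \cdot \LLT_{\bdnu_2}$. Since every edge of $G = G_1 \cup G_2$ lies inside exactly one component, the ascent statistic decomposes as $\mathrm{asc}(\kappa) = \mathrm{asc}(\kappa|_{G_1}) + \mathrm{asc}(\kappa|_{G_2})$ for any coloring $\kappa$, and the formula $\LLT_G(\bx;q) = \sum_\kappa q^{\mathrm{asc}(\kappa)} x^\kappa$ from Subsection~\ref{subsec:relation} factors accordingly. Moreover, because no vertex of $G_1$ is adjacent to any vertex of $G_2$, the area sequence of $\bdnu$ is the concatenation $a_\bdnu = (a^{(1)}_1, \ldots, a^{(1)}_{n-1}, 0, a^{(2)}_1, \ldots, a^{(2)}_{m-1}, 0)$. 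Substituting the assumed Schur expansions into $\LLT_{\bdnu_1} \cdot \LLT_{\bdnu_2}$ and invoking $s_\mu s_\nu = \sum_\lambda c_\nu^{\lambda/\mu} s_\lambda$, the lemma reduces, for each $\lambda \vdash n+m$, to
\[
\sum_{T \in \SYT(\lambda)} q^{wt_\bdnu(T)} = \sum_{\mu \subseteq \lambda} \sum_\nu c_\nu^{\lambda/\mu} \left(\sum_{P \in \SYT(\mu)} q^{wt_{\bdnu_1}(P)}\right) \left(\sum_{Q \in \SYT(\nu)} q^{wt_{\bdnu_2}(Q)}\right).
\]

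To prove this identity I would use the standard bijection $T \mapsto (P, S')$, where $P \in \SYT(\mu)$ is the sub-tableau of $T$ on the entries $1, \ldots, n$ (necessarily of some partition shape $\mu \subseteq \lambda$) and $S' \in \SYT(\lambda/\mu)$ arises from the sub-tableau on entries $n+1, \ldots, n+m$ after shifting entries down by $n$. The weight splits along this decomposition: descents of $T$ in $\{1, \ldots, n-1\}$ are precisely descents of $P$, descents in $\{n+1, \ldots, n+m-1\}$ correspond under the shift to descents of $S'$, and the position $n$ contributes nothing to $wt_\bdnu(T)$ since $a_\bdnu$ has a $0$ in that slot, whether or not $n$ itself is a descent of $T$. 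Hence $wt_\bdnu(T) = wt_{\bdnu_1}(P) + wt_{\bdnu_2}(S')$.

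The final step is to re-sum over $S'$ by grouping according to the rectification $\mathrm{Rect}(S')$, which requires two well-known facts about jeu de taquin: descent sets are preserved by jeu de taquin slides, so $wt_{\bdnu_2}(S') = wt_{\bdnu_2}(\mathrm{Rect}(S'))$; and for each partition $\nu$ and each $Q \in \SYT(\nu)$, the number of $S' \in \SYT(\lambda/\mu)$ with $\mathrm{Rect}(S') = Q$ equals $c_\nu^{\lambda/\mu}$, independent of the choice of $Q$. Combining these,
\[
\sum_{S' \in \SYT(\lambda/\mu)} q^{wt_{\bdnu_2}(S')} = \sum_\nu c_\nu^{\lambda/\mu} \sum_{Q \in \SYT(\nu)} q^{wt_{\bdnu_2}(Q)},
\]
from which the reduced identity immediately follows after the outer sum over $(\mu, P)$. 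The step I expect to require the most care is aligning the descent-preservation property with the paper's reading-order convention for $D(T)$ on skew shapes; once that is verified, the remainder is a straightforward bijective bookkeeping.
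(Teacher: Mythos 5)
Your proposal is correct, and its skeleton coincides with the paper's: both arguments factor $\LLT_{\bdnu}=\LLT_{\bdnu_1}\cdot\LLT_{\bdnu_2}$, expand the product by Littlewood--Richardson coefficients, cut a tableau $T$ at the entry $n$ into a straight-shape part $P$ on $\{1,\dots,n\}$ and a shifted skew part on $\{n+1,\dots,n+m\}$, use the fact that the concatenated area sequence has $a_n=0$ to neutralize a possible descent at position $n$, and invoke the invariance of descent sets under jeu de taquin to match weights. The genuine difference lies in how the LR multiplicity is handled. The paper makes it explicit: it constructs a weight-preserving bijection $\varphi$ from triples $(R,P,Q)$ with $R\in\mathcal{C}^{\nu/\mu}_\lambda$ onto $\SYT(\nu)$ by means of the Benkart--Sottile--Stroomer tableau-switching algorithm, with bijectivity coming from the involutivity of switching. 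You instead sum directly over the skew tableaux $S'\in\SYT(\lambda/\mu)$ and group them by rectification, citing the classical fact that the fiber of $\mathrm{Rect}$ over \emph{any} fixed $Q\in\SYT(\nu)$ (not only the row tableau $R_\nu$ appearing in the paper's statement of the LR rule) has cardinality $c_\nu^{\lambda/\mu}$ independent of $Q$. That fact is true and standard, though its usual proofs run through dual equivalence or precisely the kind of switching bijection the paper builds by hand, so your route trades the paper's explicit construction for a citation; what it buys is a shorter argument, and what it costs is self-containedness. Two further remarks: you supply a justification of the product formula $\LLT_{G_1\cup G_2}=\LLT_{G_1}\cdot\LLT_{G_2}$ from the coloring model, which the paper asserts without comment, and the convention issue you flag is harmless, since for standard tableaux the paper's reading-order definition of $D(\cdot)$ agrees with the row-comparison definition used in the jeu-de-taquin invariance statement, on straight and skew shapes alike.
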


To prove the above lemma,
we briefly introduce the switching algorithms on standard Young tableaux 
introduced by Benkart, Sottile, and Stroomer \cite{BSS},
which is built upon Sch\"{u}zenberge's jeu de taquin sliding process.
For partitions $\lambda \subseteq \mu \subseteq \nu$,
let $T$ and $S$ be standard Young tableaux of shape $\mu/\lambda$ and  $\nu/\mu$, respectively.
The tableau switching on $(T,S)$ is a combianatorial algorithm 
to apply jeu de taquin slides to $S$ iteratively 
following the order in which we choose empty boxes given by $T$
from the largest entry to the smallest entry.
For example, if
\begin{displaymath}
\begin{tikzpicture}[scale=0.8]
\def \hhh{5mm}    
\def \vvv{5mm}    
\def \hhhhh{70mm}  
\node[] at (\hhh*0.8,-\vvv*0.4) {$T=$};
\draw[-,black!10] (\hhh*3,-\vvv*2) rectangle (\hhh*4,-\vvv*1);
\draw[fill=black!30] (\hhh*3,0) rectangle (\hhh*4,\vvv*1);
%
\draw[fill=black!30] (\hhh*3,-\vvv*1) rectangle (\hhh*4,\vvv*0);
\draw[fill=black!30] (\hhh*4,-\vvv*1) rectangle (\hhh*5,\vvv*0);
%
\draw[fill=black!30] (\hhh*4,-\vvv*2) rectangle (\hhh*5,-\vvv*1);
\node[] at (\hhh*3.5,\vvv*0.5) {$4$};
\node[] at (\hhh*3.5,-\vvv*0.5) {$2$};
\node[] at (\hhh*4.5,-\vvv*0.5) {$3$};
\node[] at (\hhh*4.5,-\vvv*1.5) {$1$};
\node[] at (\hhh*3.5+\hhhhh*0.5,-\vvv*0.4) {and};
\node[] at (\hhh*0.8+\hhhhh,-\vvv*0.4) {$S=$};
\draw[-,black!10] (\hhh*3+\hhhhh,0) rectangle (\hhh*4+\hhhhh,\vvv*1);
%
\draw[-,black!10] (\hhh*3+\hhhhh,-\vvv*1) rectangle (\hhh*4+\hhhhh,\vvv*0);
\draw[-,black!10] (\hhh*4+\hhhhh,-\vvv*1) rectangle (\hhh*5+\hhhhh,\vvv*0);
\draw[-] (\hhh*5+\hhhhh,-\vvv*1) rectangle (\hhh*6+\hhhhh,\vvv*0);
\draw[-] (\hhh*4+\hhhhh,0) rectangle (\hhh*5+\hhhhh,\vvv*1);
\draw[-] (\hhh*5+\hhhhh,0) rectangle (\hhh*6+\hhhhh,\vvv*1);
\draw[-,black!10] (\hhh*3+\hhhhh,-\vvv*2) rectangle (\hhh*4+\hhhhh,-\vvv*1);
\draw[-,black!10] (\hhh*4+\hhhhh,-\vvv*2) rectangle (\hhh*5+\hhhhh,-\vvv*1);
\draw[-] (\hhh*5+\hhhhh,-\vvv*2) rectangle (\hhh*6+\hhhhh,-\vvv*1);
\draw[-] (\hhh*6+\hhhhh,-\vvv*2) rectangle (\hhh*7+\hhhhh,-\vvv*1);
%
\node[] at (\hhh*4.5+\hhhhh,\vvv*0.5) {$4$};
\node[] at (\hhh*5.5+\hhhhh,\vvv*0.5) {$5$};
\node[] at (\hhh*5.5+\hhhhh,-\vvv*0.5) {$2$};
\node[] at (\hhh*5.5+\hhhhh,-\vvv*1.5) {$1$};
\node[] at (\hhh*6.5+\hhhhh,-\vvv*1.5) {$3$};
\end{tikzpicture} \, ,
\end{displaymath}
then
the followings illustrate how the tableau switching algorithm acts on $(T,S)$ step by step:
\begin{displaymath}
\begin{tikzpicture}[scale=0.8]
\def \hhh{5mm}    
\def \vvv{5mm}    
\draw[-,black!10] (\hhh*3,-\vvv*2) rectangle (\hhh*4,-\vvv*1);
\draw[fill=black!30] (\hhh*3,0) rectangle (\hhh*4,\vvv*1);
\draw[-] (\hhh*4,0) rectangle (\hhh*5,\vvv*1);
\draw[-] (\hhh*5,0) rectangle (\hhh*6,\vvv*1);
%
\draw[fill=black!30] (\hhh*3,-\vvv*1) rectangle (\hhh*4,\vvv*0);
\draw[fill=black!30] (\hhh*4,-\vvv*1) rectangle (\hhh*5,\vvv*0);
\draw[-] (\hhh*5,-\vvv*1) rectangle (\hhh*6,\vvv*0);
%
\draw[fill=black!30] (\hhh*4,-\vvv*2) rectangle (\hhh*5,-\vvv*1);
\draw[-] (\hhh*5,-\vvv*2) rectangle (\hhh*6,-\vvv*1);
\draw[-] (\hhh*6,-\vvv*2) rectangle (\hhh*7,-\vvv*1);
%
\node[] at (\hhh*3.5,\vvv*0.5) {$4$};
\node[] at (\hhh*4.5,\vvv*0.5) {$4$};
\node[] at (\hhh*5.5,\vvv*0.5) {$5$};
\node[] at (\hhh*3.5,-\vvv*0.5) {$2$};
\node[] at (\hhh*4.5,-\vvv*0.5) {$3$};
\node[] at (\hhh*5.5,-\vvv*0.5) {$2$};
\node[] at (\hhh*4.5,-\vvv*1.5) {$1$};
\node[] at (\hhh*5.5,-\vvv*1.5) {$1$};
\node[] at (\hhh*6.5,-\vvv*1.5) {$3$};
%
\end{tikzpicture}
\begin{tikzpicture}[scale=0.8]
\def \hhh{5mm}    
\def \vvv{5mm}    
\draw[->,decorate,decoration={snake,amplitude=.4mm,segment length=2mm,post length=1mm}] (\hhh*0,-\vvv*0.5) -- (\hhh*1.7,-\vvv*0.5);
\draw[-,black!10] (\hhh*3,-\vvv*2) rectangle (\hhh*4,-\vvv*1);
\draw[-] (\hhh*3,0) rectangle (\hhh*4,\vvv*1);
\draw[-] (\hhh*4,0) rectangle (\hhh*5,\vvv*1);
\draw[fill=black!30] (\hhh*5,0) rectangle (\hhh*6,\vvv*1);
%
\draw[fill=black!30] (\hhh*3,-\vvv*1) rectangle (\hhh*4,\vvv*0);
\draw[fill=black!30] (\hhh*4,-\vvv*1) rectangle (\hhh*5,\vvv*0);
\draw[-] (\hhh*5,-\vvv*1) rectangle (\hhh*6,\vvv*0);
%
\draw[fill=black!30] (\hhh*4,-\vvv*2) rectangle (\hhh*5,-\vvv*1);
\draw[-] (\hhh*5,-\vvv*2) rectangle (\hhh*6,-\vvv*1);
\draw[-] (\hhh*6,-\vvv*2) rectangle (\hhh*7,-\vvv*1);
%
\node[] at (\hhh*3.5,\vvv*0.5) {$4$};
\node[] at (\hhh*4.5,\vvv*0.5) {$5$};
\node[] at (\hhh*5.5,\vvv*0.5) {$4$};
\node[] at (\hhh*3.5,-\vvv*0.5) {$2$};
\node[] at (\hhh*4.5,-\vvv*0.5) {$3$};
\node[] at (\hhh*5.5,-\vvv*0.5) {$2$};
\node[] at (\hhh*4.5,-\vvv*1.5) {$1$};
\node[] at (\hhh*5.5,-\vvv*1.5) {$1$};
\node[] at (\hhh*6.5,-\vvv*1.5) {$3$};
%
\end{tikzpicture}
\begin{tikzpicture}[scale=0.8]
\def \hhh{5mm}    
\def \vvv{5mm}    
\draw[->,decorate,decoration={snake,amplitude=.4mm,segment length=2mm,post length=1mm}] (\hhh*0,-\vvv*0.5) -- (\hhh*1.7,-\vvv*0.5);
%
\draw[-,black!10] (\hhh*3,-\vvv*2) rectangle (\hhh*4,-\vvv*1);
\draw[-] (\hhh*3,0) rectangle (\hhh*4,\vvv*1);
\draw[-] (\hhh*4,0) rectangle (\hhh*5,\vvv*1);
\draw[fill=black!30] (\hhh*5,0) rectangle (\hhh*6,\vvv*1);
%
\draw[fill=black!30] (\hhh*3,-\vvv*1) rectangle (\hhh*4,\vvv*0);
\draw[-] (\hhh*4,-\vvv*1) rectangle (\hhh*5,\vvv*0);
\draw[fill=black!30] (\hhh*5,-\vvv*1) rectangle (\hhh*6,\vvv*0);
%
\draw[fill=black!30] (\hhh*4,-\vvv*2) rectangle (\hhh*5,-\vvv*1);
\draw[-] (\hhh*5,-\vvv*2) rectangle (\hhh*6,-\vvv*1);
\draw[-] (\hhh*6,-\vvv*2) rectangle (\hhh*7,-\vvv*1);
%
\node[] at (\hhh*3.5,\vvv*0.5) {$4$};
\node[] at (\hhh*4.5,\vvv*0.5) {$5$};
\node[] at (\hhh*5.5,\vvv*0.5) {$4$};
\node[] at (\hhh*3.5,-\vvv*0.5) {$2$};
\node[] at (\hhh*4.5,-\vvv*0.5) {$2$};
\node[] at (\hhh*5.5,-\vvv*0.5) {$3$};
\node[] at (\hhh*4.5,-\vvv*1.5) {$1$};
\node[] at (\hhh*5.5,-\vvv*1.5) {$1$};
\node[] at (\hhh*6.5,-\vvv*1.5) {$3$};
%
%
\end{tikzpicture}
\begin{tikzpicture}[scale=0.8]
\def \hhh{5mm}    
\def \vvv{5mm}    
\draw[->,decorate,decoration={snake,amplitude=.4mm,segment length=2mm,post length=1mm}] (\hhh*0,-\vvv*0.5) -- (\hhh*1.7,-\vvv*0.5);
%
\draw[-,black!10] (\hhh*3,-\vvv*2) rectangle (\hhh*4,-\vvv*1);
\draw[-] (\hhh*3,0) rectangle (\hhh*4,\vvv*1);
\draw[fill=black!30] (\hhh*4,0) rectangle (\hhh*5,\vvv*1);
\draw[fill=black!30] (\hhh*5,0) rectangle (\hhh*6,\vvv*1);
%
\draw[-] (\hhh*3,-\vvv*1) rectangle (\hhh*4,\vvv*0);
\draw[-] (\hhh*4,-\vvv*1) rectangle (\hhh*5,\vvv*0);
\draw[fill=black!30] (\hhh*5,-\vvv*1) rectangle (\hhh*6,\vvv*0);
%
\draw[fill=black!30] (\hhh*4,-\vvv*2) rectangle (\hhh*5,-\vvv*1);
\draw[-] (\hhh*5,-\vvv*2) rectangle (\hhh*6,-\vvv*1);
\draw[-] (\hhh*6,-\vvv*2) rectangle (\hhh*7,-\vvv*1);
%
\node[] at (\hhh*3.5,\vvv*0.5) {$4$};
\node[] at (\hhh*4.5,\vvv*0.5) {$2$};
\node[] at (\hhh*5.5,\vvv*0.5) {$4$};
\node[] at (\hhh*3.5,-\vvv*0.5) {$2$};
\node[] at (\hhh*4.5,-\vvv*0.5) {$5$};
\node[] at (\hhh*5.5,-\vvv*0.5) {$3$};
\node[] at (\hhh*4.5,-\vvv*1.5) {$1$};
\node[] at (\hhh*5.5,-\vvv*1.5) {$1$};
\node[] at (\hhh*6.5,-\vvv*1.5) {$3$};
%
%
\end{tikzpicture}
\begin{tikzpicture}[scale=0.8]
\def \hhh{5mm}    
\def \vvv{5mm}    
\draw[->,decorate,decoration={snake,amplitude=.4mm,segment length=2mm,post length=1mm}] (\hhh*0,-\vvv*0.5) -- (\hhh*1.7,-\vvv*0.5) node[midway,below] {};
%
\draw[-,black!10] (\hhh*3,-\vvv*2) rectangle (\hhh*4,-\vvv*1);
\draw[-] (\hhh*3,0) rectangle (\hhh*4,\vvv*1);
\draw[fill=black!30] (\hhh*4,0) rectangle (\hhh*5,\vvv*1);
\draw[fill=black!30] (\hhh*5,0) rectangle (\hhh*6,\vvv*1);
%
\draw[-] (\hhh*3,-\vvv*1) rectangle (\hhh*4,\vvv*0);
\draw[-] (\hhh*4,-\vvv*1) rectangle (\hhh*5,\vvv*0);
\draw[fill=black!30] (\hhh*5,-\vvv*1) rectangle (\hhh*6,\vvv*0);
%
\draw[-] (\hhh*4,-\vvv*2) rectangle (\hhh*5,-\vvv*1);
\draw[-] (\hhh*5,-\vvv*2) rectangle (\hhh*6,-\vvv*1);
\draw[fill=black!30] (\hhh*6,-\vvv*2) rectangle (\hhh*7,-\vvv*1);
%
\node[] at (\hhh*3.5,\vvv*0.5) {$4$};
\node[] at (\hhh*4.5,\vvv*0.5) {$2$};
\node[] at (\hhh*5.5,\vvv*0.5) {$4$};
\node[] at (\hhh*3.5,-\vvv*0.5) {$2$};
\node[] at (\hhh*4.5,-\vvv*0.5) {$5$};
\node[] at (\hhh*5.5,-\vvv*0.5) {$3$};
\node[] at (\hhh*4.5,-\vvv*1.5) {$1$};
\node[] at (\hhh*5.5,-\vvv*1.5) {$3$};
\node[] at (\hhh*6.5,-\vvv*1.5) {$1$};
%
%
\end{tikzpicture}
\end{displaymath}
As we can see in the above example,
the tableau switching on $(T,S)$ results another pair of standard Young tableaux
and we denote it by $({}^TS,T_S)$ to respect notation in \cite{BSS}.
In the above case,
\begin{displaymath}
\begin{tikzpicture}[scale=0.8]
\def \hhh{5mm}    
\def \vvv{5mm}    
\def \hhhhh{70mm}  
\node[] at (\hhh*0.8,-\vvv*0.4) {${}^TS=$};
\draw[-,black!10] (\hhh*3,-\vvv*2) rectangle (\hhh*4,-\vvv*1);
\draw[-] (\hhh*3,0) rectangle (\hhh*4,\vvv*1);
%
\draw[-] (\hhh*3,-\vvv*1) rectangle (\hhh*4,\vvv*0);
\draw[-] (\hhh*4,-\vvv*1) rectangle (\hhh*5,\vvv*0);
%
\draw[-] (\hhh*4,-\vvv*2) rectangle (\hhh*5,-\vvv*1);
\draw[-] (\hhh*5,-\vvv*2) rectangle (\hhh*6,-\vvv*1);
\node[] at (\hhh*3.5,\vvv*0.5) {$4$};
%
\node[] at (\hhh*3.5,-\vvv*0.5) {$2$};
\node[] at (\hhh*4.5,-\vvv*0.5) {$5$};
%
\node[] at (\hhh*4.5,-\vvv*1.5) {$1$};
\node[] at (\hhh*5.5,-\vvv*1.5) {$3$};
\node[] at (\hhh*3.5+\hhhhh*0.5,-\vvv*0.4) {and};
\node[] at (\hhh*0.8+\hhhhh,-\vvv*0.4) {$T_S=$};
%
\draw[-,black!10] (\hhh*3+\hhhhh,0) rectangle (\hhh*4+\hhhhh,\vvv*1);
\draw[-,black!10] (\hhh*3+\hhhhh,-\vvv*1) rectangle (\hhh*4+\hhhhh,\vvv*0);
\draw[-,black!10] (\hhh*4+\hhhhh,-\vvv*1) rectangle (\hhh*5+\hhhhh,\vvv*0);
\draw[-,black!10] (\hhh*3+\hhhhh,-\vvv*2) rectangle (\hhh*4+\hhhhh,-\vvv*1);
\draw[-,black!10] (\hhh*4+\hhhhh,-\vvv*2) rectangle (\hhh*5+\hhhhh,-\vvv*1);
\draw[-,black!10] (\hhh*5+\hhhhh,-\vvv*2) rectangle (\hhh*6+\hhhhh,-\vvv*1);
\draw[fill=black!30] (\hhh*4+\hhhhh,0) rectangle (\hhh*5+\hhhhh,\vvv*1);
\draw[fill=black!30] (\hhh*5+\hhhhh,0) rectangle (\hhh*6+\hhhhh,\vvv*1);
\draw[fill=black!30] (\hhh*5+\hhhhh,-\vvv*1) rectangle (\hhh*6+\hhhhh,\vvv*0);
\draw[fill=black!30] (\hhh*6+\hhhhh,-\vvv*2) rectangle (\hhh*7+\hhhhh,-\vvv*1);
\node[] at (\hhh*4.5+\hhhhh,\vvv*0.5) {$2$};
\node[] at (\hhh*5.5+\hhhhh,\vvv*0.5) {$4$};
\node[] at (\hhh*5.5+\hhhhh,-\vvv*0.5) {$3$};
\node[] at (\hhh*6.5+\hhhhh,-\vvv*1.5) {$1$};
%
\end{tikzpicture} \, .
\end{displaymath}

\begin{lem}\label{lemma:switching}\cite{BSS}
  Let $T$ and $S$ be standard Young tableaux of shape  $\mu/\lambda$ and  $\nu/\mu$, respectively.
  Assume that the tableau switching on $(T,S)$ transforms $T$ into $T_S$ and $S$ into ${}^TS$. Then 
  \begin{enumerate}
  \item[(a)] $T$ and $T_S$ are Knuth equivalent, that is, they have the same rectification.
  \item[(b)] $S$ and ${}^TS$ are Knuth equivalent, that is, they have the same rectification.
  \item[(c)] The tableau switching ${}^TS$ and $T_S$ transforms ${}^TS$ into $S$ and $T_S$ into $T$.
  \end{enumerate}
\end{lem}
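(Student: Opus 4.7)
The plan is to reduce the tableau switching to a sequence of elementary local swaps (fundamental switches), each of which exchanges a $T$-entry $a$ at a cell $u$ with an adjacent $S$-entry $b$ (immediately to the right or immediately below $u$), under the conditions needed for both subtableaux to remain standard. In the BSS setup, a single jeu de taquin slide on $S$ through the cell holding the current maximum $T$-label decomposes as a cascade of such fundamental switches. I would begin by establishing three local facts: (i) each fundamental switch, viewed on the $S$-subtableau alone, is exactly one elementary step of a forward jeu de taquin slide on $S$; (ii) symmetrically, each fundamental switch, viewed on the $T$-subtableau alone, is one elementary step of a reverse jeu de taquin slide on $T$; and (iii) each fundamental switch is an involution.

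With these in hand the three parts follow quickly. For (b), iterate (i) over all switches performed by the algorithm: the $S$-part evolves from $S$ to ${}^TS$ through a composition of jeu de taquin slides, and since jeu de taquin preserves rectification, $S$ and ${}^TS$ are Knuth equivalent. Part (a) is identical using (ii), because reverse jeu de taquin also preserves rectification. For (c), combine (iii) with a confluence statement: the fundamental switches may be performed in any valid order without changing the terminal pair $({}^TS, T_S)$, because any two switches that are simultaneously available either act on disjoint cells (and trivially commute) or fit into a local $2\times 2$ or $3\times 3$ window where a short case check shows both orderings produce the same local configuration. Given confluence, one can realize the switching of $({}^TS, T_S)$ by performing the original sequence of fundamental switches in reverse order, and by (iii) this recovers $(T,S)$.

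The main obstacle will be the confluence analysis needed for (c), since one must enumerate the local configurations in which two fundamental switches compete and check each case by hand. A secondary subtlety lies in (ii): verifying that the $T$-entries truly evolve by reverse jeu de taquin — and not merely by some ad hoc permutation preserving the Knuth class — requires tracking the path of each $T$-entry through the cascade and matching its itinerary with the moves of a genuine reverse slide. The forward Knuth invariance in (i) is the easier half, reducing to direct inspection of the $2\times 2$ corner swaps used to define jeu de taquin.
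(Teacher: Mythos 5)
The paper does not prove this lemma at all: it is imported verbatim from Benkart--Sottile--Stroomer \cite{BSS}, so there is no internal argument to measure your proposal against, and what you have written is in substance a reconstruction of the BSS proof (elementary switches, forward jeu de taquin on the $S$-side, reverse jeu de taquin on the $T$-side, involution plus order-independence for part (c)). That architecture is correct, but two points need repair before this is a proof. First, the definition of switching used here (and in \cite{BSS}) already fixes a schedule --- slide $S$ through the cells of $T$ from the largest entry downward --- so part (b) is immediate from the invariance of rectification under jeu de taquin; your apparatus of arbitrary schedules and confluence is only genuinely needed for (a) and (c), and conversely your claim that each individual switch ``is a jeu de taquin step'' only holds under that particular schedule, so for an arbitrary order you must prove confluence \emph{before} you may transport Knuth equivalence. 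Second, and more seriously, the confluence step is understated. Two switches available at the same configuration and sharing a cell (say a $T$-entry that could be exchanged either with the $S$-entry to its right or with the one below it) cannot be performed in both orders: after one is executed the other is no longer available, so the two one-step reducts do \emph{not} agree inside a small window and reconverge only after several further switches. The correct statement is that the divergent configurations admit a common descendant, which combined with termination (each switch moves an $S$-entry strictly northwest) and Newman's lemma yields global confluence; this is a genuine induction in \cite{BSS}, not a two-case commutation check. Your flagged worry about (ii) is well placed for the same reason: that the $T$-entries execute an honest reverse slide, rather than merely ending in some Knuth-equivalent arrangement, is the content of the BSS symmetry theorem and requires tracking the full itinerary of each $T$-entry through the cascade. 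With those two gaps filled, the proof goes through, and part (c) follows exactly as you describe by running the reversed switch sequence, each reversed step being legal because its target configuration is the (standard) source configuration of the corresponding forward step.
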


\smallskip
\noindent
\emph{Proof of Lemma \ref{lem:prod_of_LLT}.}
For simplicity,
we write
$$c_\lambda(q) = \sum_{T \in \SYT(\lambda)} q^{wt_{\bdnu_1}(T)}
\quad \text{and} \quad
d_\mu(q) = \sum_{T \in \SYT(\mu)} q^{wt_{\bdnu_2}(T)} \, . $$
Then we have
\[
\begin{aligned}
  \LLT_{\bdnu}({\bf x};q) &= \LLT_{\bdnu_{G_1 \cup G_2}}({\bf x};q) =\LLT_{\bdnu_1}({\bf x};q) \cdot \LLT_{\bdnu_2}({\bf x};q) \\
  &= \left( \sum_{\lambda \, \vdash n} c_{\lambda}(q) s_\lambda \right)
  \cdot \left( \sum_{\mu \, \vdash m} d_{\mu}(q) s_\mu \right) \\
  &= \sum_{\nu \, \vdash (n+m)}\left( \sum_{\substack{(\lambda,\mu) \\ {\lambda \, \vdash n} \\ 
  {\mu \, \vdash m}}}c_\lambda^{\nu/\mu} c_\lambda(q) d_\mu(q) \right) s_\nu \, .
\end{aligned}
\]
Let $\mathcal{C}^{\nu/\mu}_\lambda$ be the set of all standard Young tableaux of shape $\nu/\mu$
whose rectification is the row tableau $R_\lambda$.
To prove our assertion,
it is enough to show that for each $\nu \vdash (n+m)$
there is a correspondence
\[
\varphi :
\bigcup_{\substack{(\lambda,\mu) \\ {\lambda \vdash n} \\ {\mu \vdash m}}}
\left\{ (R,P,Q) \, : \, R \in \mathcal{C}^{\nu/\mu}_\lambda , P \in \SYT(\lambda) \text{ and } Q \in \SYT(\mu) \right\} \rightarrow \{ T \, : \, T \in \SYT(\nu) \}
\]
satisfying that
\begin{enumerate}
  \item $\varphi$ is bijective, and
  \item $\varphi$ is weight-preserving, that is, if $\varphi : (R,P,Q) \mapsto T $, then $wt_{\bdnu_1}(P) + wt_{\bdnu_2}(Q) = wt_{\bdnu}(T)$.
\end{enumerate}
Indeed,
we construct such a bijection by means of the tableau switching as follows:
note that the tableau switching on $(Q,R)$ results a pair $(R_\lambda, Q_R)$ of standard Young tableaux such that $Q_R$ is of shape $\nu/\lambda$.
Let $\hat{Q}_R$ be a filling obtained from $Q_R$ by replacing the entry $i$ with $n+i$
for each $i$.
Obviously,
$\hat{Q}_R$ is a standard Young tableau of shape $\nu/\lambda$
with entries from $\{ n+1, n+2, \cdots, n+m \}$.
Now we define $\varphi((R,P,Q))$ by $P \cup \hat{Q}_R$,
which is well defined because ${Q}_R$ is uniquely determined due to Lemma \ref{lemma:switching}(c).

On the other hand,
when $T \in \SYT(\nu)$
let us denote $T^{(n)}$ be a subtableau of $T$ consisting of $\{ 1, 2, \cdots , n \}$.
And let $T^{(m)}$ be a filling obtained from $T\setminus T^{(n)}$
by replacing the entry $j$ with $j-n$.
Then $T^{(n)}$ and $T^{(m)}$ is a standard Young tableau of shape $\lambda$ and $\nu/\lambda$ for some $\lambda \vdash n$, respectively.
Applying the tableau switching on $(R_\lambda, T^{(m)})$,
we get a pair $(\text{Rect}(T^{(m)}), R_{T^{(m)}})$ of standard Young tableaux,
where $\text{Rect}(T^{(m)})$ is the rectification of $T^{(m)}$.
Moreover, 
we know that $R_{T^{(m)}}$ is Knuth equivalent to $R_\lambda$
due to Lemma \ref{lemma:switching}(a).
Thus $\text{Rect}(T^{(m)}) \in \SYT(\mu)$ for some $\mu \vdash m$ and
$R_{T^{(m)}} \in \mathcal{C}_\lambda^{\nu/\mu}$.
In all, we can conclude that
for each $T \in \SYT(\nu)$ with $\nu \vdash (n+m)$,
the tableau switching produces the triple $(R_{T^{(m)}}, T^{(n)}, \text{Rect}(T^{(m)}))$ such that
$R_{T^{(m)}} \in \mathcal{C}_\lambda^{\nu/\mu}$, $T^{(n)} \in \SYT(\lambda)$ and $\text{Rect}(T^{(m)}) \in \SYT(\mu)$ for some $\lambda \vdash n$ and $\mu \vdash m$.
Furthermore,
it follows from the ivolutiveness of the tableau switching that
$\varphi\left((R_{T^{(m)}}, T^{(n)}, \text{Rect}(T^{(m)}) \right) = T$,
which shows that $\varphi$ is bijective.

In order to prove that
$\varphi$ is weight-preserving,
we recall the well known fact that the descent set of a given Young tableau is invariant
under applying forward or reverse jeu de taquin slides.
Hence, if $\varphi((R,P,Q))=T$, then 
$i \in D(P)$ (resp., $i-n \in D(Q)$) if and only if $i \in D(T)$ for $1 \leq i \leq n-1$ (resp., $n+1 \leq i \leq n+m-1$).
If we let the area sequences of $\bdnu_1$ and $\bdnu_2$ be $a_{\bdnu_1}= (\alpha_1, \alpha_2, \cdots, \alpha_{n-1},0)$ and $a_{\bdnu_2}= (\beta_1, \beta_2, \cdots, \beta_{m-1},0)$, respectively,
then the corresponding area sequence $a_{\bdnu} = (a_1, a_2, \cdots, a_{n+m})$ is of the form
\[
a_i =
\begin{cases}
  \alpha_i \quad \quad \text{if } 1\leq i \leq n-1 \\
   0 \quad  \quad \ \text{if } i=n \\
  \beta_{i-n} \quad \text{if } n+1 \leq i \leq n+m-1 \\
   0  \quad  \ \quad \text{if } i=n+m
\end{cases} \, .
\]
Therefore, $wt_{\bdnu_1}(P) + wt_{\bdnu_2}(Q)$ and $wt_{\bdnu}(T)$ are the same.
It should be noted that 
$n$ might be a descent of $T$. 
But it dose not matter to our assertion since $a_n$ is always $0$.
\qed

\begin{rem}
\rm{The properties of the tableau switching described in Lemma \ref{lemma:switching} played a key role in proving Lemma \ref{lem:prod_of_LLT}.
These properties are well extended to the case of $m$-fold multitableaux in \cite{BSS}, and hence one can prove the following:
let $G_i$ be graph of order $n_i$ associated to the LLT diagram $\bdnu_i$ for $1 \leq i \leq m$ and $G = \bigcup_i G_i$ graph of order $\sum_i n_i := n$.
If for every $1 \leq i \leq m$
$\LLT_{\bdnu_i} ({\bf x};q) = \sum_{\lambda \, \vdash n_i} \left( \sum_{P \in \SYT(\lambda)} q^{wt_{\bdnu_i}(P)} \right) s_\lambda$, then 
\[
\LLT_{\bdnu}({\bf x};q) = \sum_{\lambda \, \vdash n} \left( \sum_{T \in \SYT(\lambda)} q^{wt_{\bdnu}(T)} \right) s_\lambda \, 
\]
where $\bdnu$ is the LLT diagram corresponding to $G$.
This can be proved in a similar way as in the proof of Lemma \ref{lem:prod_of_LLT} 
so we omit the detailed proof.
}
\end{rem}

%


\subsection{Complete graphs} 

We remark that the same LLT polynomial can be realized by different LLT diagrams,
as far as the inversion relations are kept invariant. Keeping this in mind, the simplest LLT diagram corresponding to the complete graph $K_n$ 
is when all the $n$ cells are on the same diagonal which we denote by $\bdnu_{K_n}$. The LLT polynomial of 
$\bdnu_{K_n}$ is known to be modified Hall-Littlewood polynomials indexed by one column shape $\tilde{H}_{(1^n)}(\bx ;q)$. Let us give a simple derivation here.

Consider the \emph{modified Macdonald polynomials} $\tilde{H}_{\mu}({\bf {\bf x}};q,t)$ which have the following expansion in terms of Schur functions 
$$\tilde{H}_{\mu}({\bf x};q,t) = \sum_{\lambda\vdash n}\tilde{K}_{\lambda\mu}(q,t)s_{\lambda},$$
where $\tilde{K}_{\lambda\mu}(q,t)$ are known as \emph{modified $q,t$-Kostka polynomials}. The LLT expansion of $\tilde{H}_{\mu}({\bf x};q,t)$ 
is given in \cite{HHL05}, and especially when the LLT diagram is $\bdnu_{K_n}$, by considering the combinatorial description for the monomial expansion 
of $\tilde{H}_{\mu}({\bf x};q,t)$ given in \cite{HHL05}, it is not very hard to see that 
$$\LLT_{K_n}({\bf x};q)=\tilde{H}_{(n)}({\bf x};q,t).$$
For the details, we refer the readers to \cite[Section 3]{HHL05}. In the case when $\mu=(n)$, the $t$-parameter does not occur and thus we can set $t=0$. 
Also, noting that $\tilde{K}_{\lambda\mu}(q,t)=\tilde{K}_{\lambda\mu'}(t,q)$, we obtain
\begin{align*}
\LLT_{K_n}({\bf x};q)&=\tilde{H}_{(n)}({\bf x};q,0)\\
&=  \sum_{\lambda\vdash n}\tilde{K}_{\lambda,(n)}(q,0)s_{\lambda}\\
&=  \sum_{\lambda\vdash n}\tilde{K}_{\lambda,(1^n)}(0,q)s_{\lambda}\\
&=  \sum_{\lambda\vdash n}\tilde{K}_{\lambda, (1^n)}(q)s_{\lambda}=\tilde{H}_{(1^n)}(\bx ;q),
\end{align*}
where $$\tilde{K}_{\lambda, (1^n)}(q)=\sum_{T\in\SYT(\lambda)}q^{cocharge(T)}.$$
Hence, we have 
$$
\LLT_{K_n}({\bf x};q)= \sum_{\lambda\vdash n}\left(\sum_{T\in\SYT(\lambda)}q^{cocharge(T)}\right) s_{\lambda}.
$$
For the detailed description of the cocharge statistic, see \cite{Hag08}. By considering the way how the cocharge statistic is defined and the fact that 
the Dyck diagram $\pi_{\bdnu_{K_n}}$ has the area sequence $a=(n-1,n-2,\dots, 1,0)$, i.e., $a_i = n-i$, for $1\le i \le n$, we can check that 
the statistic in Definition \ref{def:wt} gives another combinatorial description for the Schur coefficients in this case, namely, 
\begin{equation}\label{eqn:LLT_Kn}
\LLT_{K_n}({\bf x};q)= \sum_{\lambda\vdash n}\left(\sum_{T\in\SYT(\lambda)}q^{wt_{\bdnu_{K_n}}(T)}\right) s_{\lambda}.
\end{equation}

%

\subsection{Path graphs}
In this subsection, we consider the LLT polynomial $\LLT_{P_n}({\bf x};q)$  corresponding to the path graph $P_n$ of order $n$.
Definition \ref{def:wt} also gives a combinatorial description for the Schur coefficients appearing in the Schur expansion of $\LLT_{P_n}({\bf x};q)$.

\begin{prop} \label{prop:path}
Let $P_n$ be the path graph of order $n$. Then
 \[
 \LLT_{P_n}({\bf x};q)= \sum_{\lambda\vdash n}\left(\sum_{T\in\SYT(\lambda)}q^{wt_{\bdnu_{P_n}}(T)}\right) s_{\lambda} \, .
 \]
\end{prop}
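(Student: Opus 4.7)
The plan is to reduce the claim to a classical RSK identity for words. First, recall from Section \ref{subsec:relation} that every unicellular LLT polynomial can be written as $\LLT_G(\bx;q) = \sum_\kappa q^{\text{asc}(\kappa)} x^\kappa$, summed over all maps $\kappa: V(G) \to \mathbb{Z}_{>0}$. For $G = P_n$ with its natural labeling $1, 2, \dots, n$, the edges are $\{i, i+1\}$ for $1 \le i \le n-1$, so $\text{asc}(\kappa) = |\{i : \kappa(i) < \kappa(i+1)\}|$. Applying the reversal bijection $\kappa \mapsto w$ defined by $w_i = \kappa(n+1-i)$ — which preserves the monomial $x^\kappa = x^w$ and carries $\text{asc}(\kappa)$ to the descent count $\text{des}(w) := |\{i : w_i > w_{i+1}\}|$ — I would rewrite
$$\LLT_{P_n}(\bx;q) = \sum_{w} q^{\text{des}(w)} x^w,$$
where $w$ ranges over all words of length $n$ in $\mathbb{Z}_{>0}$.

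Next, I would apply the RSK correspondence for words, which yields a bijection between words $w$ of length $n$ and pairs $(P, Q)$ with $P \in \SSYT(\lambda)$, $Q \in \SYT(\lambda)$ for some $\lambda \vdash n$. A classical theorem (Schützenberger's descent-preservation property) gives $\text{Des}(w) = D(Q)$ as subsets of $\{1, \dots, n-1\}$. Combining this with $\sum_{P \in \SSYT(\lambda)} x^{\text{cont}(P)} = s_\lambda$ produces
$$\LLT_{P_n}(\bx;q) = \sum_{\lambda \vdash n} s_\lambda \sum_{Q \in \SYT(\lambda)} q^{|D(Q)|}.$$

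Finally, the Dyck diagram $\pi_{\bdnu_{P_n}}$ has area sequence $(1, 1, \dots, 1, 0)$, so Definition \ref{def:wt} simplifies to $wt_{\bdnu_{P_n}}(Q) = \sum_{i \in D(Q)} a_i = |D(Q)|$ since $D(Q) \subseteq \{1, \dots, n-1\}$, completing the proof. I expect the main obstacle to be a convention check: one must verify that the paper's notion of $D(Q)$ (defined via the reading order, together with the French convention for Young diagrams used throughout the paper) agrees with the standard notion of descent set that appears in the RSK descent-preservation theorem. Once this convention check is made, the rest is immediate. An alternative proof, more in the spirit of the paper's earlier arguments, would be induction on $n$ via the linear relation from Theorem \ref{thm:local}(a$'$) specialized to $L_{2, n-2} = P_n$, combining the inductive hypothesis for shorter paths with \eqref{eqn:LLT_Kn} and Lemma \ref{lem:prod_of_LLT} to handle the disjoint-union terms $P_{n-i-2} \cup K_{i+2}$ that appear; in that approach the main difficulty would instead be a term-by-term $q$-identity matching the Schur coefficients.
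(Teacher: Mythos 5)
Your proposal is correct and follows essentially the same route as the paper: the paper also reduces $\LLT_{P_n}$ to the generating function $\sum_w q^{|D(w)|}x^w$ over all words of length $n$ (it does this by identifying $\boldsymbol{S}\in\SSYT(\bdnu_{P_n})$ with a word and observing that $\inv(\boldsymbol{S})$ counts exactly the descents $(i,i+1)$, rather than via your ascent-to-descent reversal of colorings, but these are equivalent bookkeeping choices), then invokes the standardization/RSK fact $D(w)=D(\mathrm{stan}(w))=D(Q(w))$ to collect terms into Schur functions, and finally notes that the area sequence $(1,\dots,1,0)$ makes $wt_{\bdnu_{P_n}}(Q)=|D(Q)|$. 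No gaps; your flagged convention check on $D(Q)$ is the only point of care and is handled implicitly in the paper.
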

\begin{proof}
For a word  $w = w_1w_2 \cdots w_n \in \mathbb{Z}_{>0}^n$,
an index $i \in \{ 1,2, \cdots, n \}$ is said to be a \emph{descent} of $w$ if 
$w_i > w_{i+1}$.
If $\bdnu$ is a unicellular LLT diagram of $n$ cells and $\boldsymbol{S} \in \SSYT{(\bdnu)}$, then $\boldsymbol{S}$ can be regarded as a word of length $n$, say $w({\boldsymbol{S}})$,
and
$\inv(\boldsymbol{S})$ is equal to the number of pairs $(i,j)$ satisfying the following conditions:
\begin{enumerate}
\item[(i)] the cell whose the column labeled by $i$ and the row by $j$ is contained in $\pi_{\bdnu}$, and 
\item[(ii)] $i < j$ but $w({\boldsymbol{S}})_i > w({\boldsymbol{S}})_j $.
\end{enumerate}
In the case where $\bdnu = \bdnu_{P_n}$,
to count $\inv(\boldsymbol{S})$
it is enough to consider
pairs of the form $(i, i+1)$ for $1 \leq i \leq n-1$.
That is, for $\boldsymbol{S} \in \SSYT{(\bdnu_{P_n})}$
\[
\inv(\boldsymbol{S}) = |  \, \{ (i, i+1) : w({\boldsymbol{S}})_i > w({\boldsymbol{S}})_{i+1} \} \, |
\]
which counts the number of descents of $w({\boldsymbol{S}})$.

For each word $w$,
we define $D(w)$ to be the set of all descents of $w$
and $\text{stan}(w)$ its standardization, that is,
$\text{stan}(w)$ is the permutation in $S_n$ obtained by sorting pairs $(w_i, i)$ in lexicographic order.
Then it is well known that 
three kinds of descent sets $D(w)$, $D(\text{stan}(w))$, $D(Q(w))$ are the same,
where $Q(w)$ denote the recording tableau corresponding to $w$ in the procedure of
Robinson-Schensted-Knuth insertion algorithm.

In all, we have
\[
\begin{aligned}
  \LLT_{P_n}({\bf x};q) &= \sum_{\boldsymbol{S} \in \SSYT(\bdnu_{P_n})}  q^{\inv{(\boldsymbol{S}})} \,   x^{\boldsymbol{S}} = \sum_{w \in \mathbb{Z}_{>0}^n} \,  q^{|D(w)|} \,  x^{w} \\
 &=  \sum_{\sigma \in S_n} \, q^{|D(\sigma)|} \left( \sum_{\substack{w \in \mathbb{Z}_{>0}^n \text{ such that }\\ \text{stan}(w)=\sigma}} x^{w}\right) \\
  &=  \sum_{\sigma \in S_n} \, q^{|D(Q(\sigma))|} \left( \sum_{\substack{w \in \mathbb{Z}_{>0}^n \text{ such that }\\ Q(w)=Q(\sigma)}} x^{w}\right) \\
  &= \sum_{\lambda \vdash n} \, \sum_{Q \in \SYT(\lambda)} \, q^{|D(Q)|} \left( \sum_{\substack{w \in \mathbb{Z}_{>0}^n \text{ such that }\\ Q(w)=Q}} x^w \right) \,
  = \sum_{\lambda \vdash n} \, \sum_{Q \in \SYT(\lambda)} \, q^{|D(Q)|} \, s_\lambda \, .
\end{aligned}
\]
Our proposition follows from the fact that the area sequence is 
$a_{\bdnu_{P_n}} = (1, 1, \cdots, 1, 0)$ and thus $wt_{\bdnu_{P_n}}(Q) = \sum_{i \in D(Q)} 1 = |D(Q)|$.
\end{proof}

%


\subsection{Graphs related by linear relations} 


Consider the LLT diagram $\bdnu_{K_n}$ corresponding to the complete graph $K_n$ (for the sake of using the linear relation, we use the left-most figure in Figure \ref{fig:linearrel})
and move the cell on the right most diagonal upward so that 
$k$ cells on the left diagonal are left-below of the moved cell (see the middle figure in Figure \ref{fig:linearrel}).
If we consider the corresponding graphs, moving the cell on the second diagonal removes edges connected to the moved vertex and the 
$k$ vertices going below of it. We denote such a graph by $K_n ^{(k)}$.
Then from Theorem \ref{thm:local}, we obtain the following linear relations.

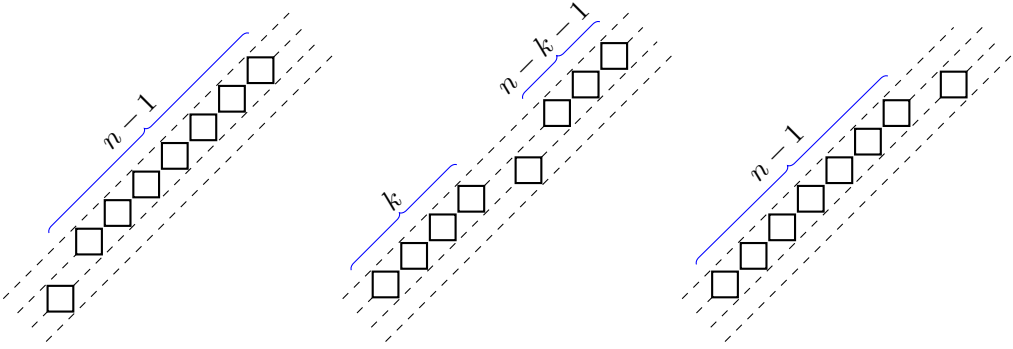
\begin{figure}[ht]
\begin{tikzpicture}[scale=.38]
\draw[thick] (1.05,1.05)--(1.95,1.05)--(1.95,1.95)--(1.05,1.95)--(1.05,1.05)--cycle;
\draw[thick] (2.05,3.05)--(2.95,3.05)--(2.95,3.95)--(2.05,3.95)--(2.05,3.05)--cycle;
\draw[thick] (3.05,4.05)--(3.95,4.05)--(3.95,4.95)--(3.05,4.95)--(3.05,4.05)--cycle;
\draw[thick] (4.05,5.05)--(4.95,5.05)--(4.95,5.95)--(4.05,5.95)--(4.05,5.05)--cycle;
\draw[thick] (5.05,6.05)--(5.95,6.05)--(5.95,6.95)--(5.05,6.95)--(5.05,6.05)--cycle;
\draw[thick] (6.05,7.05)--(6.95,7.05)--(6.95,7.95)--(6.05,7.95)--(6.05,7.05)--cycle;
\draw[thick] (7.05,8.05)--(7.95,8.05)--(7.95,8.95)--(7.05,8.95)--(7.05,8.05)--cycle;
\draw[thick] (8.05,9.05)--(8.95,9.05)--(8.95,9.95)--(8.05,9.95)--(8.05,9.05)--cycle;
\draw[dashed] (-.5,1.5)--(9.5,11.5);
\draw[dashed] (0,1)--(2.05,3.05);
\draw[dashed] (8.95,9.95)--(10,11);
\draw[dashed] (.5,.5)--(1.05,1.05);
\draw[dashed] (1.95,1.95)--(10.5,10.5);
\draw[dashed] (1,0)--(11,10);
\draw [snake=brace,color=blue] (1.1, 3.8)--(8.1, 10.8);
\node (a) at (3.9, 7.7) [rotate=45]{$n-1$};
\end{tikzpicture}
\begin{tikzpicture}[scale=.38]
\draw[thick] (1.05,2.05)--(1.95,2.05)--(1.95,2.95)--(1.05,2.95)--(1.05,2.05)--cycle;
\draw[thick] (2.05,3.05)--(2.95,3.05)--(2.95,3.95)--(2.05,3.95)--(2.05,3.05)--cycle;
\draw[thick] (3.05,4.05)--(3.95,4.05)--(3.95,4.95)--(3.05,4.95)--(3.05,4.05)--cycle;
\draw[thick] (4.05,5.05)--(4.95,5.05)--(4.95,5.95)--(4.05,5.95)--(4.05,5.05)--cycle;
\draw[thick] (6.05,6.05)--(6.95,6.05)--(6.95,6.95)--(6.05,6.95)--(6.05,6.05)--cycle;
\draw[thick] (7.05,8.05)--(7.95,8.05)--(7.95,8.95)--(7.05,8.95)--(7.05,8.05)--cycle;
\draw[thick] (8.05,9.05)--(8.95,9.05)--(8.95,9.95)--(8.05,9.95)--(8.05,9.05)--cycle;
\draw[thick] (9.05,10.05)--(9.95,10.05)--(9.95,10.95)--(9.05,10.95)--(9.05,10.05)--cycle;
\draw[dashed] (0,2)--(10,12);
\draw[dashed] (0.5,1.5)--(1.05,2.05);
\draw[dashed] (4.95,5.95)--(7.05,8.05);
\draw[dashed] (9.95,10.95)--(10.5,11.5);
\draw[dashed] (1,1)--(6.05,6.05);
\draw[dashed] (6.95,6.95)--(11,11);
\draw[dashed] (1.5,0.5)--(11.5,10.5);
\draw [snake=brace,color=blue] (.3, 3)--(4, 6.7);
\draw [snake=brace,color=blue] (6.3, 9)--(9, 11.7);
\node (a) at (1.7, 5.5) [rotate=45]{$k$};
\node (b) at (7, 10.8) [rotate=45]{$n-k-1$};
\end{tikzpicture}
\begin{tikzpicture}[scale=.38]
\draw[thick] (1.05,2.05)--(1.95,2.05)--(1.95,2.95)--(1.05,2.95)--(1.05,2.05)--cycle;
\draw[thick] (2.05,3.05)--(2.95,3.05)--(2.95,3.95)--(2.05,3.95)--(2.05,3.05)--cycle;
\draw[thick] (3.05,4.05)--(3.95,4.05)--(3.95,4.95)--(3.05,4.95)--(3.05,4.05)--cycle;
\draw[thick] (4.05,5.05)--(4.95,5.05)--(4.95,5.95)--(4.05,5.95)--(4.05,5.05)--cycle;
\draw[thick] (5.05,6.05)--(5.95,6.05)--(5.95,6.95)--(5.05,6.95)--(5.05,6.05)--cycle;
\draw[thick] (6.05,7.05)--(6.95,7.05)--(6.95,7.95)--(6.05,7.95)--(6.05,7.05)--cycle;
\draw[thick] (7.05,8.05)--(7.95,8.05)--(7.95,8.95)--(7.05,8.95)--(7.05,8.05)--cycle;
\draw[thick] (9.05,9.05)--(9.95,9.05)--(9.95,9.95)--(9.05,9.95)--(9.05,9.05)--cycle;
\draw[dashed] (0,2)--(9.5,11.5);
\draw[dashed] (0.5,1.5)--(1.05,2.05);
\draw[dashed] (7.95,8.95)--(10.5,11.5);
\draw[dashed] (1,1)--(9.05,9.05);
\draw[dashed] (9.95,9.95)--(11,11);
\draw[dashed] (1.5,.5)--(11.5,10.5);
\draw [snake=brace,color=blue] (.5, 3.2)--(7.2, 9.8);
\node (a) at (3.3, 7.1) [rotate=45]{$n-1$};
\end{tikzpicture}
\caption{$\bdnu_{K_n}$, $\bdnu_{K_n ^{(k)}}$ and $\bdnu_{K_n ^{(n-1)}}$ from the left. }\label{fig:linearrel}
\end{figure}

\begin{prop}\label{prop:lrs}
\begin{equation}
\LLT_{K_n}({\bf x};q) +q[n-2]_q \LLT_{K_n ^{(n-1)}}({\bf x};q) = [n-1]_q \LLT_{K_n ^{(n-2)}}({\bf x};q).\label{eqn:lr1}
\end{equation}
More generally, for $1\le k\le \ell-1$ and $2\le \ell \le n-1$, we have 
\begin{equation}
[\ell -k]_q \LLT_{K_n}({\bf x};q) +q^{\ell -k}[k]_q \LLT_{K_n ^{(\ell)}}({\bf x};q) = [\ell]_q \LLT_{K_n ^{(k)}}({\bf x};q).\label{eqn:lr2}
\end{equation}
\end{prop}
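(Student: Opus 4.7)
The plan is to apply Theorem~\ref{thm:local}(b) directly to the LLT diagram of $K_n$.

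First, I would identify the area sequence of $\bdnu_{K_n}$. From the left panel of Figure~\ref{fig:linearrel}, the $n$ cells of $\bdnu_{K_n}$ all lie on a single super-diagonal, so every pair of cells contributes an inversion and the associated Dyck diagram $\pi_{\bdnu_{K_n}}$ is the full staircase. Consequently,
\[
a_{\bdnu_{K_n}} = (n-1, n-2, \ldots, 1, 0).
\]
Comparing with the middle panel of Figure~\ref{fig:linearrel}, the diagram $\bdnu_{K_n^{(z)}}$ is obtained by sliding the topmost cell of $\bdnu_{K_n}$ upward by $z$ diagonals; this deletes exactly $z$ cells from the top row of the Dyck diagram while leaving the remaining rows unchanged, so
\[
a_{\bdnu_{K_n^{(z)}}} = (n-1-z, n-2, \ldots, 1, 0).
\]
This is precisely the sequence $a^z$ produced by the construction of Theorem~\ref{thm:local} with index $i = 1$.

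Second, I would check that the hypotheses of Theorem~\ref{thm:local} hold at $i = 1$ for every $\ell$ with $2 \le \ell \le n-1$. With the convention $a_0 = 1$, the inequality $a_{i-1} + \ell - 1 \le a_i$ becomes $\ell \le n-1$, which is satisfied. Using $a_j = n - j$ for $0 \le j \le n$, the chain of equalities $a_{i+a_i-s} = a_{i+a_i-s+1} + 1$ for $s = 1, \ldots, \ell - 1$ reduces to the trivial identities $s = (s-1) + 1$, so all hypotheses are verified.

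Third, substituting $\bdnu^z = \bdnu_{K_n^{(z)}}$ into Theorem~\ref{thm:local}(b) yields \eqref{eqn:lr2} for all $1 \le k \le \ell - 1$ and $2 \le \ell \le n - 1$; the special case $\ell = n-1$, $k = n-2$ (where $[\ell-k]_q = [1]_q = 1$ and $q^{\ell-k} = q$) is exactly \eqref{eqn:lr1}. The only non-mechanical step is the area-sequence identification in step one, but this is immediate from the pictures, so there is no real obstacle: the proposition is a direct specialization of Theorem~\ref{thm:local}.
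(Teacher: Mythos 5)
Your proof is correct and is precisely the argument the paper intends: the paper states Proposition~\ref{prop:lrs} as an immediate consequence of Theorem~\ref{thm:local}, and your verification (area sequence $(n-1,n-2,\dots,1,0)$, index $i=1$, the hypothesis chain reducing to $s=(s-1)+1$, and $\ell=n-1$, $k=n-2$ recovering \eqref{eqn:lr1}) supplies exactly the details the paper leaves implicit. The only cosmetic quibble is that the left panel of Figure~\ref{fig:linearrel} actually places one cell on an adjacent diagonal rather than all $n$ cells on a single diagonal, but both realizations of $K_n$ yield the full staircase Dyck diagram, so your area-sequence identification and the rest of the argument are unaffected.
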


By using the linear relations in Proposition \ref{prop:lrs}, we can prove combinatorial formulas corresponding to LLT diagrams $\bdnu_{K_n ^{(k)}}$.

\begin{prop} 
We have 
$$\LLT _{K_n ^{(k)}} ({\bf x};q)=\sum_{\lambda\vdash n}\left( \sum_{T\in \SYT (\lambda)}q^{wt_{\bdnu_{K_n ^{(k)}}}(T)}\right)s_\lambda,$$
where 
$$wt_{\bdnu_{K_n ^{(k)}}}(T) =\sum_{i\in D(T)} a_i,$$
with the area sequence $a_1 = n-k-1$ and $a_i = n-i$ for  $2\le i\le n$. 
\end{prop}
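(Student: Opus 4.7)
The plan is to reduce the proof to two base cases, $k=0$ and $k=n-1$, via the linear relation of Proposition~\ref{prop:lrs}(\ref{eqn:lr2}) specialized at $\ell = n-1$, namely
\[
[n-1-k]_q\,\LLT_{K_n}(\bx;q) + q^{n-1-k}[k]_q\,\LLT_{K_n^{(n-1)}}(\bx;q) = [n-1]_q\,\LLT_{K_n^{(k)}}(\bx;q).
\]
The case $k=0$ is exactly formula (\ref{eqn:LLT_Kn}). For $k=n-1$, the specified area sequence $(0, n-2, n-3, \ldots, 1, 0)$ reveals that $K_n^{(n-1)}$ is the disjoint union of an isolated vertex $\{1\}$ with the complete graph $K_{n-1}$ on $\{2,\ldots,n\}$; moreover, this matches the concatenation rule for area sequences exhibited in the proof of Lemma~\ref{lem:prod_of_LLT}. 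Applying Lemma~\ref{lem:prod_of_LLT} with $G_1 = K_1$ and $G_2 = K_{n-1}$, together with (\ref{eqn:LLT_Kn}) for each factor, yields
\[
\LLT_{K_n^{(n-1)}}(\bx;q) = \sum_{\lambda \vdash n} \left( \sum_{T \in \SYT(\lambda)} q^{wt_{\bdnu_{K_n^{(n-1)}}}(T)} \right) s_\lambda.
\]

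For $1 \le k \le n-2$, one then solves the linear relation for $\LLT_{K_n^{(k)}}(\bx;q)$ and extracts the coefficient of $s_\lambda$. The three area sequences involved agree in positions $i \ge 2$ (where $a_i = n-i$) and differ only in the first entry, taking the values $n-1$, $n-k-1$, and $0$, respectively. Hence, setting $W(T) := \sum_{i \in D(T),\, i \ge 2}(n-i)$ and $\chi(T) := \mathbf{1}[1 \in D(T)]$, we have
\[
wt_{\bdnu_{K_n}}(T) = W(T) + (n-1)\chi(T),\quad
wt_{\bdnu_{K_n^{(k)}}}(T) = W(T) + (n-k-1)\chi(T),\quad
wt_{\bdnu_{K_n^{(n-1)}}}(T) = W(T).
\]
Writing $A_\lambda(q) := \sum_{\chi(T)=0} q^{W(T)}$ and $B_\lambda(q) := \sum_{\chi(T)=1} q^{W(T)}$, the identity to verify on the $s_\lambda$ coefficient reduces to
\[
[n-1]_q\bigl(A_\lambda + q^{n-1-k}B_\lambda\bigr) = [n-1-k]_q\bigl(A_\lambda + q^{n-1}B_\lambda\bigr) + q^{n-1-k}[k]_q\bigl(A_\lambda + B_\lambda\bigr),
\]
which follows immediately from the elementary $q$-integer identities $[n-1-k]_q + q^{n-1-k}[k]_q = [n-1]_q$ and $[k]_q + q^k[n-1-k]_q = [n-1]_q$.

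There is no substantial obstacle: the only point that requires genuine care is recognizing $K_n^{(n-1)}$ as the disjoint union $K_1 \sqcup K_{n-1}$ and confirming that its area sequence matches the concatenation produced by Lemma~\ref{lem:prod_of_LLT}; after that, the argument reduces to two one-line $q$-integer identities.
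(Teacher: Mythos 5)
Your proof is correct and follows essentially the same route as the paper: specialize the linear relation of Proposition~\ref{prop:lrs} at $\ell=n-1$, note that the three area sequences agree in positions $i\ge 2$, and verify the $s_\lambda$-coefficient identity by splitting according to whether $1\in D(T)$, which reduces to the $q$-integer identities $[n-1-k]_q+q^{n-1-k}[k]_q=[n-1]_q$ and $[k]_q+q^k[n-1-k]_q=[n-1]_q$. The only difference is that you explicitly justify the base case $K_n^{(n-1)}=K_1\sqcup K_{n-1}$ via Lemma~\ref{lem:prod_of_LLT}, a point the paper uses implicitly here and only states later in the proof of Proposition~\ref{prop:LLTlollipop}.
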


\begin{proof}
We use the linear relation \eqref{eqn:lr2} when $\ell =n-1$ :
$$
[n-k-1]_q \LLT_{K_n}({\bf x};q) +q^{n -k-1}[k]_q \LLT_{K_n ^{(n-1)}}({\bf x};q) = [n-1]_q \LLT_{K_n ^{(k)}}({\bf x};q).
$$
Considering this linear relation, given $\lambda\vdash n$, for each $T\in\SYT(\lambda)$,  we need to prove that 
$$q^{wt_{\bdnu_{K_n ^{(k)}}}(T)}=\frac{1}{[n-1]_q}\left([n -k-1]_q\cdot q^{wt_{\bdnu_{K_n }}(T)}+q^{n-k-1}[k]_q \cdot q^{wt_{\bdnu_{K_n ^{(n-1)}}}(T)}\right).$$
First of all, observe that $a_i$ values of the Dyck diagrams corresponding to $\bdnu_{K_n}$, $\bdnu_{K_n ^{(k)}}$ and $\bdnu_{K_n ^{(n-1)}}$ are the same, for 
$2\le i\le n$ (or $n\ne 1$) as $a_i = n-i$. 
We divide the cases when $1\in D(T)$ or not. 
If $1 \notin D(T)$, then 
\begin{align*}
&\frac{1}{[n-1]_q}\left([n -k-1]_q\cdot q^{wt_{\bdnu_{K_n }}(T)}+q^{n-k-1}[k]_q \cdot q^{wt_{\bdnu_{K_n ^{(n-1)}}}(T)}\right) \\
&= \frac{q^{wt_{\bdnu_{K_n}}(T)}}{[n-1]_q} ([n-k-1]_q +q^{n-k-1}[k]_q)\\
&= q^{wt_{\bdnu_{K_n}}(T)}=q^{wt_{\bdnu_{K_n ^{(k)}}}(T)}.
\end{align*}
If $1\in D(T)$, then 
\begin{align*}
&\frac{1}{[n-1]_q}\left([n-k-1]_q \cdot q^{wt_{\bdnu_{K_n}}(T)}+q^{n-k-1}[k]_q \cdot q^{wt_{\bdnu_{K_n ^{(n-1)}}}(T)}\right) \\
&= \frac{1}{[n-1]_q} \left([n-k-1]_q \cdot q^{n-1}\cdot q^{wt_{\bdnu_{K_n ^{(n-1)}}}(T)}+q^{n-k-1}[k]_q \cdot q^{wt_{\bdnu_{K_n ^{(n-1)}}}(T)}\right)\\
&=  \frac{q^{n-k-1}\cdot q^{wt_{\bdnu_{K_n ^{(n-1)}}}(T)}}{[n-1]_q}(q^{k}[n-k-1]_q +[k]_q)\\
&=q^{n-k-1} \cdot q^{wt_{\bdnu_{K_n ^{(n-1)}}}(T)}=q^{wt_{\bdnu_{K_n ^{(k)}}}(T)}.
\end{align*}
\end{proof}


\subsection{Lollipop graphs}


In this section, we consider the Schur expansion of LLT polynomials corresponding to lollipop graphs defined in Section \ref{sec:lollipopG}.

\begin{figure}[ht]
\begin{tikzpicture}[scale=.4]
\draw[thick] (.05,.05)--(.95,.05)--(.95,.95)--(.05,.95)--(.05,.05)--cycle;
\draw[thick] (1.05,1.05)--(1.95,1.05)--(1.95,1.95)--(1.05,1.95)--(1.05,1.05)--cycle;
\draw[thick] (2.05,2.05)--(2.95,2.05)--(2.95,2.95)--(2.05,2.95)--(2.05,2.05)--cycle;
\draw[thick] (3.05,3.05)--(3.95,3.05)--(3.95,3.95)--(3.05,3.95)--(3.05,3.05)--cycle;
\draw[thick] (4.05,4.05)--(4.95,4.05)--(4.95,4.95)--(4.05,4.95)--(4.05,4.05)--cycle;
\draw[thick] (7.05,7.05)--(7.95,7.05)--(7.95,7.95)--(7.05,7.95)--(7.05,7.05)--cycle;
\draw[thick] (6.05,5.05)--(6.95,5.05)--(6.95,5.95)--(6.05,5.95)--(6.05,5.05)--cycle;
\draw[thick] (10.05,9.05)--(10.95,9.05)--(10.95,9.95)--(10.05,9.95)--(10.05,9.05)--cycle;
\draw[thick] (9.05,7.05)--(9.95,7.05)--(9.95,7.95)--(9.05,7.95)--(9.05,7.05)--cycle;
\draw[thick] (13.05,11.05)--(13.95,11.05)--(13.95,11.95)--(13.05,11.95)--(13.05,11.05)--cycle;
\draw[thick] (12.05,9.05)--(12.95,9.05)--(12.95,9.95)--(12.05,9.95)--(12.05,9.05)--cycle;
\draw[dashed] (-1,0)--(12.5,13.5);
\draw[dashed] (-.5,-.5)--(.05,.05);
\draw[dashed] (4.95,4.95)--(7.05,7.05);
\draw[dashed] (7.95,7.95)--(13.5,13.5);
\draw[dashed] (0,-1)--(6.05,5.05);
\draw[dashed] (6.95,5.95)--(10.05,9.05);
\draw[dashed] (10.95,9.95)--(14,13);
\draw[dashed] (.5,-1.5)--(9.05,7.05);
\draw[dashed] (9.95,7.95)--(13.05,11.05);
\draw[dashed] (13.95,11.95)--(14.5,12.5);
\draw[dashed] (1,-2)--(12.05,9.05);
\draw[dashed] (12.95,9.95)--(15,12);
\draw[dashed] (1.5,-2.5)--(15.5,11.5);
\node[] at (.5, .5) {$11$};
\node[] at (1.5, 1.5) {$10$};
\node[] at (2.5, 2.5) {$9$};
\node[] at (3.5, 3.5) {$8$};
\node[] at (4.5, 4.5) {$7$};
\node[] at (6.5, 5.5) {$5$};
\node[] at (7.5, 7.5) {$6$};
\node[] at (10.5, 9.5) {$4$};
\node[] at (9.5, 7.5) {$3$};
\node[] at (13.5, 11.5) {$2$};
\node[] at (12.5, 9.5) {$1$};
\end{tikzpicture}
\begin{tikzpicture}[scale=.17]
\foreach \n in {6}
 \foreach \i in {1,...,\n}
    \fill (\i*360/\n:\n) coordinate (n\i) circle(2*\n pt)
      \ifnum \i>1 foreach \j in{\i,...,1}{(n\i)edge(n\j)}\fi;
\draw (180:6)--(-31,0);

\foreach \i in {1,...,5}
 \node[below] at (-36+5*\i,0) {\i}; 

\node[below] at (180:6) {6};
\node[below] at (240:6) {7};
\node[below] at (300:6) {8};
\node[right] at (0:6) {9};
\node[above] at (60:6) {10};
\node[above] at (120:6) {11};

\filldraw [black] (-11,0) circle (12pt)
			(-16,0) circle (12pt)
			(-21,0) circle (12pt)
			(-26,0) circle (12pt)
			(-31,0) circle (12pt);
\end{tikzpicture}
\caption{The LLT diagram corresponding to the lollipop graph $L_{6,5}$}
\end{figure}
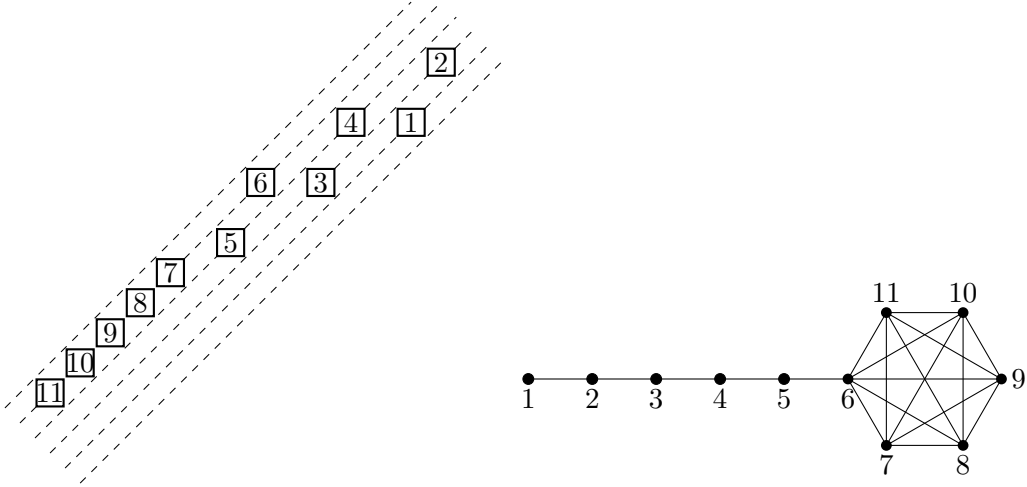

\begin{prop}\label{prop:LLTlollipop}
We have 
$$
\LLT_{L_{m,n}}({\bf x};q) =\sum_{\lambda\vdash n}\left( \sum_{T\in \SYT(\lambda) }q^{wt_{L_{m,n}}(T)}\right) s_{\lambda},
$$
where $wt_{L_{m,n}}(T)=\sum_{i\in D(T)}a_i$ with 
$$a_i = \begin{cases} 
1 & \text{ for } 1\le i \le n,\\
m+n-i & \text{ for } n+1 \le i\le m+n.\end{cases}$$
\end{prop}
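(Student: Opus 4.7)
The plan is to prove the formula by induction on $n$, using the linear relation established in Section~\ref{sec:lollipopG} together with the equivalence between chromatic and LLT linear relations (Proposition~\ref{prop:AP}). Specifically, equation~\eqref{eqn:lollipop} transferred to the LLT side reads, for $m\geq 2$ and $n\geq 1$,
\[
\LLT_{L_{m,n}}({\bf x};q) \;=\; \frac{1}{[m]_q}\Bigl(\LLT_{L_{m+1,n-1}}({\bf x};q) + q[m-1]_q\,\LLT_{P_n \cup K_m}({\bf x};q)\Bigr).
\]

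The base cases are $n=0$ (for which $L_{m,0}=K_m$ and the claim is \eqref{eqn:LLT_Kn}) and $m=1$ (for which $L_{1,n}=P_{n+1}$ and the claim is Proposition~\ref{prop:path}); in both situations the stated area sequence agrees with the one used in the base result. For the inductive step, the first term on the right has the desired Schur expansion by the induction hypothesis (applied with the smaller value $n-1$), and $\LLT_{P_n\cup K_m}$ has the desired Schur expansion by combining Proposition~\ref{prop:path}, equation~\eqref{eqn:LLT_Kn}, and Lemma~\ref{lem:prod_of_LLT}.

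The proof thus reduces to a tableauwise identity: for every $\lambda\vdash m+n$ and $T\in\SYT(\lambda)$,
\[
q^{wt_{L_{m,n}}(T)} \;=\; \frac{1}{[m]_q}\Bigl(q^{wt_{L_{m+1,n-1}}(T)} + q[m-1]_q\,q^{wt_{P_n\cup K_m}(T)}\Bigr).
\]
To verify this, I would compare the three area sequences and observe that they agree at every index $i\neq n$, differing only at position $n$, where the values are $1$ (for $L_{m,n}$), $m$ (for $L_{m+1,n-1}$), and $0$ (for $P_n\cup K_m$). Setting $S := \sum_{i\in D(T),\, i\neq n} a_i$ for the common contribution of the other descents, the identity splits into two routine cases: if $n\notin D(T)$, both sides equal $q^S$ thanks to $1+q[m-1]_q=[m]_q$; if $n\in D(T)$, both sides equal $q^{S+1}$ thanks to $q^m+q[m-1]_q=q\,[m]_q$.

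The main obstacle is really a careful bookkeeping exercise: writing down the area sequences of $L_{m,n}$, $L_{m+1,n-1}$, and $P_n\cup K_m$ and pinpointing that they disagree only at the single index $n$, which corresponds to the vertex where the path meets the clique. Once this localization is observed, the remaining algebra is elementary and the induction closes.
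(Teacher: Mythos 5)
Your proposal is correct and follows essentially the same route as the paper: it uses the LLT form of the linear relation \eqref{eqn:lollipop}, Lemma \ref{lem:prod_of_LLT} together with \eqref{eqn:LLT_Kn} and Proposition \ref{prop:path} for the product term, and reduces to the same tableauwise identity resolved by the two cases $n\in D(T)$ and $n\notin D(T)$ via $1+q[m-1]_q=[m]_q$ and $q^{m}+q[m-1]_q=q[m]_q$. The only cosmetic difference is that you take $n=0$ (i.e.\ $K_{m+n}$) as the base case directly, whereas the paper first works out $L_{N-1,1}$ explicitly as its initial case.
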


\begin{proof}
Let us rewrite the linear relation given in Proposition \ref{prop:quasilollipop} in terms of LLT polynomials ;
\begin{equation}\label{eqn:lollipoplr1}
\LLT_{L_{m,n}}({\bf x};q) =\frac{1}{[m]_q} \left(\LLT_{L_{m+1,n-1}}({\bf x};q) +q[m-1]_q \LLT_{K_m}({\bf x};q) \cdot \LLT_{P_n}({\bf x};q) \right).
\end{equation}
We utilize the above linear relation to prove the Schur coefficients formula. Note that \eqref{eqn:lollipoplr1} can be used to compute the LLT polynomial corresponding to the lollipop graph $L_{m,n}$,
given the LLT polynomial corresponding to the graph $L_{m+1,n-1}$ which has a larger complete graph part and shorter path graph part. So, as an initial case, 
we prove a combinatorial formula for $\LLT_{L_{N-1,1}}({\bf x};q)$. In this case, the linear relation becomes 
\begin{equation}\label{eqn:lollipoplr2}
\LLT_{L_{N-1,1}}({\bf x};q) =\frac{1}{[N-1]_q} \left(\LLT_{K_N}({\bf x};q) +q[N-2]_q \LLT_{K_{N-1}}({\bf x};q) \cdot s_1  \right).
\end{equation}
By Lemma \ref{lem:prod_of_LLT}, we know that 
\begin{align*}
\LLT_{K_{N-1}}({\bf x};q) \cdot s_1  & = \LLT_{K_{N}^{(N-1)}}({\bf x};q)\\
&= \sum_{\lambda\vdash N}\left(\sum_{T\in\SYT (\lambda)} q^{wt_{K_N ^{(N-1)}}(T)} \right)s_\lambda, 
\end{align*}
where $wt_{K_N ^{(N-1)}}(T)=\sum_{i\in D(T)}a_i$ with $a_1 =0$, and $a_i =N-i$ for $2\le i\le N$. 
We already have seen this type of Schur expansion for $\LLT_{K_N}({\bf x};q)$ in \eqref{eqn:LLT_Kn} with 
$wt_{K_N}(T)=\sum_{i\in D(T)}a_i$, for $a_i = N-i$, $1\le i \le N$. Observing that $a_i$ values are consistent for $2\le i\le N$, we consider two cases 
when $1\in D(T)$ and $1\notin D(T)$, and prove, for $\lambda\vdash N$, $T\in \SYT(\lambda)$, 
$$
q^{wt_{L_{N-1,1}}(T)}=\frac{1}{[N-1]_q} \left( q^{wt_{K_N}(T)}+q[N-2]_q \cdot q^{wt_{K_N ^{(N-1)}}(T)}\right).$$
If $1\notin D(T)$, then 
\begin{align*}
& \frac{1}{[N-1]_q} \left( q^{wt_{K_N}(T)}+q[N-2]_q \cdot q^{wt_{K_N ^{(N-1)}}(T)}\right)\\
&=  \frac{ q^{wt_{K_N}(T)}}{[N-1]_q} (1+q[N-2]_q)\\
&= q^{wt_{K_N}(T)} =  q^{wt_{L_{N-1,1}}(T)}.
\end{align*}
If $1\in D(T)$, then 
\begin{align*}
& \frac{1}{[N-1]_q} \left( q^{wt_{K_N}(T)}+q[N-2]_q \cdot q^{wt_{K_N ^{(N-1)}}(T)}\right)\\
&= \frac{1}{[N-1]_q} \left(q^{N-1}\cdot  q^{wt_{K_N ^{(N-1)}}(T)}+q[N-2]_q \cdot q^{wt_{K_N ^{(N-1)}}(T)}\right)\\
&= \frac{ q^{1+ wt_{K_N ^{(N-1)}}(T)}}{[N-1]_q}(q^{N-2}+[N-2]_q)\\
&= q^{1+ wt_{K_N ^{(N-1)}}(T)} = q^{wt_{L_{N-1,1}}(T)}.
\end{align*}
Now, for $\lambda\vdash n$, we compute the coefficient of $s_\lambda$ of the right hand side of \eqref{eqn:lollipoplr1} (denote it by $\mathsf{RHS}$)
and check that it is consistent with $\sum_{T\in \SYT(\lambda) }q^{wt_{L_{m,n}}(T)}$. 
By Lemma \ref{lem:prod_of_LLT}, we know that 
$$
 \LLT_{K_{m}}({\bf x};q) \cdot \LLT_{P_{n}}({\bf x};q)  = \sum_{\lambda\vdash n}\left( \sum_{T\in \SYT(\lambda) }q^{wt_{K_{m} \cup P_{n}}(T)}\right) s_{\lambda},$$
 where $wt_{K_{m} \cup P_{n}}(T)=\sum_{i\in D(T)}a_i$ with 
$$a_i = \begin{cases} 
1 & \text{ for } 1\le i \le n-1,\\
0, & \text{ for } i = n,\\
m+n-i & \text{ for } n+1 \le i\le m+n.\end{cases}$$
So
\begin{align*}
\mathsf{RHS} &= \frac{1}{[m]_q}\left(\sum_{\substack{T\in\SYT(\lambda)\\ n\in D(T)}} q^{m-1}\cdot q^{wt_{L_{m,n}}(T)} +
\sum_{\substack{T\in\SYT(\lambda)\\ n\notin D(T)}}  q^{wt_{L_{m,n}}(T)}\right.\\
& \qquad\qquad\qquad\qquad\qquad\left.+q[m-1]_q \sum_{T\in\SYT(\lambda)}q^{wt_{L_{m,n}}(T)-\chi (n\in D(T))}\right)\\
&=  \frac{1}{[m]_q}\left(\sum_{\substack{T\in\SYT(\lambda)\\ n\in D(T)}} q^{m-1}\cdot q^{wt_{L_{m,n}}(T)} +
\sum_{\substack{T\in\SYT(\lambda)\\ n\notin D(T)}}  q^{wt_{L_{m,n}}(T)} \right.\\
&\quad\qquad\qquad\left.  +  [m-1]_q \sum_{\substack{T\in\SYT(\lambda)\\ n\in D(T)}} q^{wt_{L_{m,n}}(T)} +q[m-1]_q
\sum_{\substack{T\in\SYT(\lambda)\\ n\notin D(T)}}  q^{wt_{L_{m,n}}(T)}\right)\\
&=\frac{1}{[m]_q}\left(\sum_{\substack{T\in\SYT(\lambda)\\ n\in D(T)}} q^{wt_{L_{m,n}}(T)} (q^{m-1}+[m-1]_q)
+ \sum_{\substack{T\in\SYT(\lambda)\\ n\notin D(T)}} q^{wt_{L_{m,n}}(T)}(1+q[m-1]_q) \right)\\
&=\sum_{\substack{T\in\SYT(\lambda)\\ n\in D(T)}} q^{wt_{L_{m,n}}(T)}+ \sum_{\substack{T\in\SYT(\lambda)\\ n\notin D(T)}} q^{wt_{L_{m,n}}(T)}\\
&= \sum_{T\in\SYT(\lambda)} q^{wt_{L_{m,n}}(T)}.
\end{align*}
\end{proof}


\subsection{Melting lollipop graphs}


The Schur coefficients of LLT polynomials corresponding to the melting lollipop graphs (see Section \ref{melting} for the definition of melting lollipop graphs) can be described in a similar fashion.

\begin{prop}\label{prop:meltinglollipop}
We have 
$$
\LLT_{L_{m,n}^{(k)}}({\bf x};q) =\sum_{\lambda\vdash n}\left( \sum_{T\in \SYT(\lambda) }q^{wt_{L_{m,n}^{(k)}}(T)}\right) s_{\lambda},
$$
where $wt_{L_{m,n}^{(k)}}(T)=\sum_{i\in D(T)}a_i$ with 
$$a_i = \begin{cases} 
1 & \text{ for } 1\le i \le n,\\
m-k-1, & \text{ for } i = n+1,\\
m+n-i & \text{ for } n+2 \le i\le m+n.\end{cases}$$
\end{prop}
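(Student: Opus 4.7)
The plan is to reduce Proposition \ref{prop:meltinglollipop} to a pointwise verification at the level of each standard Young tableau, using the linear relation \eqref{eqn:mlollipoplr} together with the already-established Schur expansions of the two ``flanking'' graphs $L_{m,n}$ and $P_{n+1} \cup K_{m-1}$. By Proposition \ref{prop:AP}, the relation \eqref{eqn:mlollipoplr} holds verbatim with $X$ replaced by $\LLT$:
\[
[m-k-1]_q \LLT_{L_{m,n}}({\bf x};q) + q^{m-k-1}[k]_q \LLT_{P_{n+1}\cup K_{m-1}}({\bf x};q) = [m-1]_q \LLT_{L^{(k)}_{m,n}}({\bf x};q),
\]
so it is enough to expand the left-hand side in the Schur basis and match coefficients.

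First I would assemble the Schur expansions of the two terms on the left. Proposition \ref{prop:LLTlollipop} gives $\LLT_{L_{m,n}}({\bf x};q) = \sum_\lambda \bigl( \sum_{T \in \SYT(\lambda)} q^{wt_{L_{m,n}}(T)}\bigr) s_\lambda$ with area sequence $(1,\dots,1,m-1,m-2,\dots,0)$. By combining Proposition \ref{prop:path}, the formula \eqref{eqn:LLT_Kn} for $\LLT_{K_{m-1}}$, and Lemma \ref{lem:prod_of_LLT}, the product $\LLT_{P_{n+1}}({\bf x};q) \cdot \LLT_{K_{m-1}}({\bf x};q)$ expands as $\sum_\lambda \bigl( \sum_{T \in \SYT(\lambda)} q^{wt_{P_{n+1} \cup K_{m-1}}(T)}\bigr) s_\lambda$ whose area sequence is $(1,\dots,1,0,m-2,m-3,\dots,0)$. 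The key observation is that the three area sequences $a_{L_{m,n}}$, $a_{P_{n+1} \cup K_{m-1}}$ and $a_{L^{(k)}_{m,n}}$ coincide at every index $i \ne n+1$; at $i=n+1$, their values are $m-1$, $0$, and $m-k-1$ respectively.

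With $\lambda \vdash m+n$ fixed and $T \in \SYT(\lambda)$, let $A = \sum_{i \in D(T),\, i \ne n+1} a_i$ denote the common contribution from positions outside $n+1$. Matching the coefficient of $s_\lambda$ then reduces to proving the tableau-wise identity
\[
[m-1]_q \, q^{wt_{L^{(k)}_{m,n}}(T)} = [m-k-1]_q \, q^{wt_{L_{m,n}}(T)} + q^{m-k-1}[k]_q \, q^{wt_{P_{n+1} \cup K_{m-1}}(T)}.
\]
I would then split into two cases. If $n+1 \notin D(T)$, all three weights collapse to $A$, and the identity reduces to the $q$-identity $[m-k-1]_q + q^{m-k-1}[k]_q = [m-1]_q$, obtained by splitting $1+q+\cdots+q^{m-2}$ at $q^{m-k-1}$. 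If $n+1 \in D(T)$, the weights become $A+m-1$, $A$, and $A+m-k-1$; after factoring out $q^{A+m-k-1}$, the identity becomes $q^k [m-k-1]_q + [k]_q = [m-1]_q$, which is the complementary splitting of $[m-1]_q$ at $q^k$.

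There is no serious obstacle: the hard structural work was done in Lemma \ref{lem:prod_of_LLT} (to describe $\LLT_{P_{n+1} \cup K_{m-1}}$ via tableau switching) and in the linear-relation machinery of Section \ref{sec:local}. The only remaining task is the two-case $q$-binomial check above, which is routine. The cleanest presentation is to state the reduction, then dispatch both cases in one short display using the two splittings of $[m-1]_q$.
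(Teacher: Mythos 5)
Your proposal is correct and follows essentially the same route as the paper: rewrite the relation \eqref{eqn:mlollipoplr} in LLT form, feed in Proposition \ref{prop:LLTlollipop} and Lemma \ref{lem:prod_of_LLT} for the two flanking terms, and verify the coefficient of each $s_\lambda$ tableau-by-tableau by splitting on whether $n+1\in D(T)$. The paper explicitly omits the final two-case check, and your splittings $[m-k-1]_q+q^{m-k-1}[k]_q=[m-1]_q$ and $q^k[m-k-1]_q+[k]_q=[m-1]_q$ are exactly the details it leaves to the reader.
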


\begin{proof}
To prove this Schur expansion formula, we rewrite the linear relation  \eqref{eqn:mlollipoplr} in terms of LLT polynomials 
\begin{multline}\label{eqn:mlollipoplr1}
\LLT_{L_{m,n}^{(k)}}({\bf x};q) \\=\frac{1}{[m-1]_q} \left([m-k-1]_q \LLT_{L_{m,n}}({\bf x};q) +q^{m-k-1}[k]_q \LLT_{K_{m-1}}({\bf x};q) \cdot \LLT_{P_{n+1}}({\bf x};q) \right).
\end{multline}
We already obtained the Schur expansion of $\LLT_{L_{m,n}}({\bf x};q)$ in Proposition \ref{prop:LLTlollipop} and by Lemma \ref{lem:prod_of_LLT}, we have 
$$
 \LLT_{K_{m-1}}({\bf x};q) \cdot \LLT_{P_{n+1}}({\bf x};q)  = \sum_{\lambda\vdash n}\left( \sum_{T\in \SYT(\lambda) }q^{wt_{K_{m-1}\cup P_{n+1}}(T)}\right) s_{\lambda},$$
 where $wt_{K_{m-1}\cup P_{n+1}}(T)=\sum_{i\in D(T)}a_i$ with 
$$a_i = \begin{cases} 
1 & \text{ for } 1\le i \le n,\\
0, & \text{ for } i = n+1,\\
m+n-i & \text{ for } n+2 \le i\le m+n.\end{cases}$$
For $\lambda\vdash n$, we compute the coefficient of $s_\lambda$ of the right hand side of \eqref{eqn:mlollipoplr1} 
and check that it is consistent with $\sum_{T\in \SYT(\lambda) }q^{wt_{L_{m,n}^{(k)}}(T)}$. The way how the proof goes is similar to the proof of Proposition \ref{prop:LLTlollipop},
by dividing the cases when $n+1\in D(T)$ and when $n+1\notin D(T)$. We omit the details. 
\end{proof}

\section{Appendix}

In this section, we introduce a combinatorial way to compute the Schur coefficients of LLT polynomials when the Schur functions are indexed by hook shapes. We apply the result of Egge-Loehr-Warrington \cite{ELW} to the 
quasisymmetric expansion of LLT polynomials. Especially when the LLT polynomials are unicellular, then the weight statistic in Definition \ref{def:wt} can be also used in the description of the Schur coefficients indexed by hook shapes. 

We note a quasisymmetric expansion of LLT polynomials given in \cite{HHL05}. A semistandard tableau $\boldsymbol{S}$ is 
\emph{standard} if it is a bijection $\boldsymbol{S}:\bigsqcup \bdnu \rightarrow \{1,2,\dots, n\}$, where 
$n=|\bdnu|=\sum_{j=1}^k |\nu^{(j)}|$. We denote the set of standard tableaux of shape $\bdnu$ by $\SYT(\bdnu)$. 

Define the \emph{descent set} $D(\boldsymbol{S})\subseteq \{1,2,\dots, n-1\}$ of $\boldsymbol{S}\in \SYT(\bdnu)$ by 
$$D(\boldsymbol{S}) =\{ i ~:~ \boldsymbol{S}^{-1}(i+1)\text{ precedes } \boldsymbol{S}^{-1}(i) \text{ in the content reading order} \}.$$
Then, 
\begin{equation}\label{eqn:LLTinQ}
\LLT _{\bdnu}({\bf x};q) =\sum_{\boldsymbol{S}\in \SYT(\bdnu)}q^{\inv (\boldsymbol{S})}F_{co(D(\boldsymbol{S}))}({\bf x}),
\end{equation}
where $co(D(\boldsymbol{S}))$ is the composition corresponding to the set $D(\boldsymbol{S})$ and $F_\alpha ({\bf x})$ is the fundamental 
quasisymmetric function indexed by the composition $\alpha$.


\subsection{Schur coefficients indexed by hook shapes}


Recall the result of Egge-Loehr-Warrington \cite{ELW} on obtaining the Schur expansion given the quasisymmetric expansion in terms of the fundamental quasisymmetric functions.

A skew diagram $\lambda/\mu$ is a \emph{rim-hook} of $\lambda$ if $\lambda/\mu$ does not contain any $2\times 2$ subdiagram and any two consecutive cells of $\lambda/\mu$ share an edge. A rim-hook is \emph{special} if it starts from the cell in the first column. The number of rows of a rim hook $H$ is referred to as its \emph{height}, denoted by $\text{ht}(H)$. The sign of a rim hook $H$ is defined to be $(-1)^{\text{ht}(H)-1}$. A \emph{special rim-hook tableau} $S$ of shape $\lambda$ and content $\alpha$ is a partition of the diagram of $\lambda$ using special rim-hooks such that the length of the $i$th rim-hook from the bottom is $\alpha_i$. The sign of $S$ is the product of the signs of the rim hooks of $S$. 

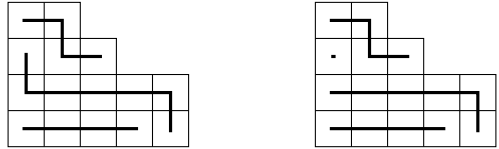
\begin{figure}[ht]
\begin{tikzpicture}[scale=.48]
\draw (0,0)--(0,4)--(2,4)--(2,3)--(3,3)--(3,2)--(5,2)--(5,0)--(0,0)--cycle;
\draw (0,1)--(5,1);
\draw (0,2)--(3,2);
\draw (0,3)--(2,3);
\draw (1,0)--(1,4);
\draw (2,0)--(2,3);
\draw (3,0)--(3,2);
\draw (4,0)--(4,2);
\draw[very thick] (.4,3.5)--(1.5, 3.5)--(1.5,2.5)--(2.6,2.5);
\draw[very thick] (.5, 2.6)--(.5,1.5)--(4.5,1.5)--(4.5,.4);
\draw[very thick] (.4, .5)--(3.6,.5);
\end{tikzpicture}
\qquad\qquad
\begin{tikzpicture}[scale=.48]
\draw (0,0)--(0,4)--(2,4)--(2,3)--(3,3)--(3,2)--(5,2)--(5,0)--(0,0)--cycle;
\draw (0,1)--(5,1);
\draw (0,2)--(3,2);
\draw (0,3)--(2,3);
\draw (1,0)--(1,4);
\draw (2,0)--(2,3);
\draw (3,0)--(3,2);
\draw (4,0)--(4,2);
\draw[very thick] (.4,3.5)--(1.5, 3.5)--(1.5,2.5)--(2.6,2.5);
\draw[very thick] (.44, 2.5)--(.56,2.5);
\draw[very thick] (.4,1.5)--(4.5,1.5)--(4.5,.4);
\draw[very thick] (.4, .5)--(3.6,.5);
\end{tikzpicture}
\caption{Special rim hook tableaux of content $\alpha=(4,7,4)$ on the left and $\alpha=(4,6,1,4)$ on the right.}
\end{figure}

The result of Egge-Loehr-Warrington \cite{ELW} gives a combinatorial description of Schur coefficients, given a fundamental quasisymmetric expansion of any symmetric functions. 

\begin{thm}\cite[Theorem 11]{ELW}\label{thm:ELW1}
Suppose $\mathbb{F}$ is a field, and we have a symmetric function
$$f=\sum_{\lambda\vdash n}c_\lambda s_\lambda =\sum_{\alpha\models n }d_{\alpha}F_\alpha \quad(c_\lambda , d_\alpha\in \mathbb{F}).$$
Then we have 
$$c_\lambda=\sum_{\alpha\models n}d_\alpha K_n ^{\ast} (\alpha, \lambda)$$
for all $\lambda\vdash n$, where 
$$K_n ^{\ast} (\alpha, \lambda) =\sum_{\beta \text{ finer than } \alpha}K_n ' (\beta, \lambda),$$
and $K_n '$ is a right inverse of the Kostka matrix $K_n$ with entries $K'_n(\alpha,\lambda)$, the sum of the signs of the special rim-hook tableaux of shape $\lambda$ and content $\alpha$.
\end{thm}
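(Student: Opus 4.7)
My plan is to verify the identity on the Schur basis of $\Lambda^n$ and invoke linearity. Since the map $f \mapsto c_\lambda(f)$ extracting the Schur coefficient is linear and the Schur functions form a basis, it suffices to check that when $f = s_\mu$ for an arbitrary partition $\mu \vdash n$, the proposed formula returns $\delta_{\mu\lambda}$.

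First I would invoke Gessel's fundamental expansion
\[
s_\mu = \sum_{T \in \SYT(\mu)} F_{\text{co}(D(T))}({\bf x}),
\]
so that the fundamental quasisymmetric coefficient is $d_\alpha = |\{T \in \SYT(\mu) : \text{co}(D(T)) = \alpha\}|$. Substituting into $\sum_\alpha d_\alpha K_n^\ast(\alpha, \lambda)$ and unfolding the definition $K_n^\ast(\alpha, \lambda) = \sum_{\beta \preceq \alpha} K'_n(\beta, \lambda)$ (where $\beta \preceq \alpha$ means $\beta$ refines $\alpha$) yields
\[
\sum_{T \in \SYT(\mu)} \sum_{\beta \preceq \text{co}(D(T))} K'_n(\beta, \lambda).
\]
Since $\beta$ refines $\text{co}(D(T))$ exactly when $D(T) \subseteq S(\beta) := \{\beta_1, \beta_1+\beta_2, \ldots, \beta_1+\cdots+\beta_{\ell(\beta)-1}\}$, I would swap the order of summation to obtain
\[
\sum_{\beta \models n} K'_n(\beta, \lambda) \cdot |\{T \in \SYT(\mu) : D(T) \subseteq S(\beta)\}|.
\]

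Next I would invoke the classical standardization identity
\[
|\{T \in \SYT(\mu) : D(T) \subseteq S(\beta)\}| = K(\mu, \beta),
\]
the Kostka number (which is invariant under rearranging the parts of $\beta$). The bijection is: given such a $T$, group its values by the intervals $(S(\beta)_{i-1}, S(\beta)_i]$ and replace every value in the $i$th interval by $i$; because no descent of $T$ lies inside any such interval, the resulting filling is a semistandard Young tableau of shape $\mu$ with content $\beta$, and the process is reversed by standardization. After this substitution, the expression becomes
\[
\sum_{\beta \models n} K(\mu, \beta)\, K'_n(\beta, \lambda),
\]
which is precisely the $(\mu,\lambda)$-entry of the product $K_n\, K'_n$. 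By the asserted right-inverse property of $K'_n$, this entry equals $\delta_{\mu\lambda}$, completing the verification on the Schur basis.

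The main obstacle is establishing the right-inverse property itself, which is the genuine combinatorial content of the theorem: one must extend Egecioglu and Remmel's signed-rim-hook interpretation of $K^{-1}$ from partition indices to composition indices and verify $\sum_\beta K(\mu,\beta) K'_n(\beta,\lambda) = \delta_{\mu\lambda}$. This amounts to constructing a sign-reversing involution on pairs consisting of a semistandard Young tableau and a special rim-hook tableau with matching contents, pairing up and cancelling all contributions except those in which both tableaux force $\mu = \lambda$. Once this combinatorial input is in place, the remainder of the argument is purely a rearrangement of sums together with Gessel's expansion and the standardization identity.
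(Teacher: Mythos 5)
This statement is quoted verbatim from Egge--Loehr--Warrington \cite{ELW} and the paper supplies no proof of it, so there is no internal argument to compare yours against; I can only assess your proposal on its own terms and against ELW's published proof. Your reduction is sound as far as it goes: linearity of $f\mapsto c_\lambda$ and of $f\mapsto d_\alpha$ justifies checking on the Schur basis; Gessel's expansion $s_\mu=\sum_{T\in\SYT(\mu)}F_{co(D(T))}$ gives $d_\alpha=\#\{T: co(D(T))=\alpha\}$; the equivalence between ``$\beta$ refines $co(D(T))$'' and ``$D(T)\subseteq S(\beta)$'' is correct; and the standardization identity $\#\{T\in\SYT(\mu): D(T)\subseteq S(\beta)\}=K(\mu,\beta)$ is classical. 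This correctly collapses the claim to the single matrix identity $\sum_{\beta\models n}K(\mu,\beta)\,K_n'(\beta,\lambda)=\delta_{\mu\lambda}$. Your route differs from ELW's, who work on the quasisymmetric side (expanding $F_\alpha$ into monomial quasisymmetric functions $M_\beta$ and matching the resulting $M_\beta$-coefficients of a symmetric $f$ with its $m_{\tilde\beta}$-coefficients) rather than verifying on the Schur basis; the two derivations are essentially transposes of one another and both bottom out at the same place.

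That bottom, however, is exactly what you do not prove: the assertion that the combinatorially defined matrix $K_n'$ (signed counts of special rim-hook tableaux, indexed by \emph{compositions}) is a right inverse of the composition-indexed Kostka matrix. Since $K(\mu,\beta)$ depends only on the partition rearrangement $\tilde\beta$, this amounts to showing $\sum_{\beta:\,\tilde\beta=\nu}K_n'(\beta,\lambda)=K^{-1}(\nu,\lambda)$, the E\u{g}ecio\u{g}lu--Remmel inverse Kostka entry, and then invoking their theorem; this is the genuine content of ELW's development leading up to their Theorem 11, and you correctly name it as the obstacle but only sketch what a sign-reversing involution would have to accomplish. If the right-inverse property is taken as given (as the phrasing of the statement arguably permits), your argument is complete; if the theorem is read as also asserting that the rim-hook matrix \emph{is} such a right inverse, then your write-up is a correct skeleton with the load-bearing lemma left unproved.
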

If each rim-hook contains exactly one cell in the first column of the diagram of $\lambda$, then we say that the rim-hook tableau $S$ of shape $\lambda$ and content $\alpha$ (or equivalently, $(\alpha, \lambda)$) is \textit{flat}. Then we can simplify the description of $K_n ^{\ast}(\alpha, \lambda)$ even more.

\begin{thm}\cite[Theorem 15]{ELW}\label{thm:ELW2}
Let $\alpha\models n$, $\lambda\vdash n$. If $(\alpha,\lambda)$ is flat, then $K_n ^{\ast} (\alpha, \lambda)=K_n '(\alpha,\lambda)=\pm 1$. Otherwise, $K_n ^{\ast}(\alpha, \lambda)=0$. In particular, $K_n ^{\ast}(\alpha,\lambda)=\chi(\alpha=\lambda)$ when $\lambda$ is a hook.
\end{thm}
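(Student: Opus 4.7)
\emph{Proof plan.} The statement is established by a sign-reversing involution on the set $\mathcal{T}(\alpha,\lambda)$ of pairs $(\beta,S)$ with $\beta$ a composition refining $\alpha$ and $S$ a special rim-hook tableau of shape $\lambda$ and content $\beta$, weighted by $\mathrm{sign}(S)$. The goal is to show that the only surviving terms correspond to flat tableaux with $\beta=\alpha$.

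First I would define the involution. Given a non-flat pair $(\beta,S)$, use a canonical rule to single out a rim-hook $H$ in $S$ whose intersection with the first column has at least two cells (say, pick the top-most such $H$, and then the bottom-most column-one cell $c$ inside $H$). The involution splits $H$ at $c$ into two special rim-hooks---one of which is the singleton $\{c\}$---refining $\beta$ by breaking off a part of size $1$; the inverse fuses $c$ back into the rim-hook directly above. Because the split drops $\mathrm{ht}(H)$ by $1$ and introduces a new rim-hook of height $1$, the overall sign flips. The key legality check is that cutting a special rim-hook at a column-one cell produces two special rim-hooks, so the result is still a special rim-hook tableau.

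Second, the fixed points are exactly the flat tableaux with $\beta=\alpha$: a flat tableau has no rim-hook to refine, and conversely the involution moves every non-flat configuration. Hence $K_n^{\ast}(\alpha,\lambda)$ equals the signed count of flat special rim-hook tableaux of shape $\lambda$ and content $\alpha$. Such a flat tableau is unique when it exists, because the requirement that each of the $\ell(\lambda)$ rim-hooks meet the first column in exactly one cell determines the arrangement once the prescribed lengths $\alpha_i$ are placed. This yields $K_n^{\ast}(\alpha,\lambda)=\pm 1$ when $(\alpha,\lambda)$ is flat and $0$ otherwise; restricting the same involution to $\beta=\alpha$ gives the parallel statement for $K_n'(\alpha,\lambda)$.

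For the hook specialization $\lambda=(a,1^b)$, in any flat filling only the rim-hook through $(1,1)$ can reach row $1$, so it must cover the entire first row (length $a$), while the remaining $b$ rim-hooks are forced to be the singleton cells of the first column below it. Thus the unique flat tableau has content $(a,1^b)=\lambda$, so $K_n^{\ast}(\alpha,\lambda)=\chi(\alpha=\lambda)$. The hard part of the proof is the first step: verifying that the canonical split/merge move is genuinely an involution on the non-flat pairs, since the local change at $H$ interacts with both the neighboring rim-hooks in $S$ and the cuts in $\beta$; this is the technical core of the ELW argument.
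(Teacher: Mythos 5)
The paper does not prove this statement: it is quoted directly from Egge--Loehr--Warrington \cite{ELW} (their Theorem~15) and used as a black box, so there is no in-paper argument to compare yours against; what follows is an assessment of your proposal on its own terms. A sign-reversing involution is the right genre of argument, but the map you specify is not an involution, and the gaps are not merely technical. First, your rule acts only by \emph{splitting} (``pick the top-most rim-hook with at least two first-column cells and detach its bottom-most first-column cell''), so every flat special rim-hook tableau is a fixed point --- including flat tableaux whose content \emph{strictly} refines $\alpha$, which do occur in the sum defining $K_n^{\ast}(\alpha,\lambda)$ and must cancel (already for $\lambda=(1,1)$, $\alpha=(2)$). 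The involution therefore must also contain merge moves, and the decision of when to merge cannot be read off the tableau alone: a first-column singleton may be absorbed into the adjacent rim-hook only if the resulting content still refines $\alpha$, otherwise the image leaves the index set. Your rule never references $\alpha$, yet the surviving fixed point depends on $\alpha$, so an $\alpha$-blind rule cannot work. Second, even granting merges, your choice of \emph{where} to act fails. Take $\lambda=(1,1,1)$ and $\alpha=(3)$: the four tableaux have contents $(3),(2,1),(1,2),(1,1,1)$ with signs $+,-,-,+$. Splitting the unique offending rim-hook sends both the $(2,1)$- and the $(1,2)$-tableau to the $(1,1,1)$-tableau, so the map is not injective; the cancelling pairing is $(3)\leftrightarrow(2,1)$ and $(1,2)\leftrightarrow(1,1,1)$, which is produced by scanning first-column \emph{positions} from the outermost rim-hook inward and performing the first legal split-or-merge, not by selecting a distinguished rim-hook and always splitting it.

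Two further points. Your claim that ``restricting the same involution to $\beta=\alpha$'' yields $K_n'(\alpha,\lambda)=\pm1$ cannot be right, because the split/merge move changes the content and hence does not preserve the stratum $\beta=\alpha$; that part of the theorem needs a separate (easier) argument, e.g.\ that a flat content has exactly $\ell(\lambda)$ parts, that any special rim-hook tableau with $\ell(\lambda)$ rim-hooks is automatically flat (each rim-hook occupies at least one of the $\ell(\lambda)$ first-column cells), and that a flat tableau is determined by its content, so there is a single surviving term with sign $\pm1$ and no cancellation. Your hook specialization and the uniqueness-of-the-flat-tableau observation are fine. But the ``technical core'' you defer is exactly where the construction breaks, so the proposal is not yet a proof.
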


Given the quasisymmetric expansion \eqref{eqn:LLTinQ}, we apply Theorem \ref{thm:ELW1} and \ref{thm:ELW2} to obtain the Schur coefficients of LLT polynomials when the Schur functions are indexed by hook shapes.

\begin{prop}\label{prop:hook}
Let $\lambda=(k, 1^{n-k})$ be a partition of a hook shape. 
$$\langle \LLT_{\boldsymbol{\nu} }({\bf x};q)   , s_{\lambda} \rangle = \sum_{\substack{\boldsymbol{S}\in\SYT (\boldsymbol{\nu} )  \text{ such that}\\ D(\boldsymbol{S}) = \{ k, k+1,\dots, n-1\}} } q^{\inv(\boldsymbol{S})}.$$
\end{prop}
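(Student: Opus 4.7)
The plan is to extract the coefficient of $s_\lambda$ directly from the quasisymmetric expansion \eqref{eqn:LLTinQ}
$$\LLT_{\bdnu}({\bf x};q) = \sum_{\boldsymbol{S}\in \SYT(\bdnu)} q^{\inv(\boldsymbol{S})} F_{co(D(\boldsymbol{S}))}({\bf x})$$
by applying the Egge--Loehr--Warrington machinery (Theorem \ref{thm:ELW1}) and its simplification to hook shapes (Theorem \ref{thm:ELW2}). Since the only ingredient specific to $\LLT_{\bdnu}$ that we need is its fundamental quasisymmetric expansion, the rest of the argument is a direct transcription of these general results.

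First I would observe that Theorem \ref{thm:ELW2} gives, for any hook partition $\lambda$ and any composition $\alpha \models n$, the identity $K_n^{\ast}(\alpha,\lambda)=\chi(\alpha=\lambda)$. Combined with Theorem \ref{thm:ELW1}, this says that for the single term $F_\alpha$ the coefficient of $s_\lambda$ is $\chi(\alpha=\lambda)$, that is, $\langle F_\alpha,s_\lambda\rangle=\chi(\alpha=\lambda)$ whenever $\lambda$ is a hook. Taking inner product with $s_\lambda$ term-by-term in \eqref{eqn:LLTinQ} then gives
$$\langle \LLT_{\bdnu}({\bf x};q),s_\lambda\rangle = \sum_{\boldsymbol{S}\in\SYT(\bdnu)} q^{\inv(\boldsymbol{S})}\,\chi\bigl(co(D(\boldsymbol{S}))=\lambda\bigr).$$

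Next I would identify exactly which subsets $D \subseteq [n-1]$ satisfy $co(D)=(k,1^{n-k})$. Under the standard bijection between subsets of $[n-1]$ and compositions of $n$, a set $D=\{d_1<d_2<\cdots<d_r\}$ corresponds to the composition $(d_1,d_2-d_1,\dots,n-d_r)$. Requiring this to equal $(k,1,1,\dots,1)$ forces $d_1=k$ and $d_{i+1}-d_i=1$ for $i\ge 1$, and then the final entry $n-d_r=1$ determines $r=n-k$. Hence the unique such set is $D=\{k,k+1,\dots,n-1\}$.

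Substituting this into the sum above yields precisely
$$\langle \LLT_{\bdnu}({\bf x};q),s_\lambda\rangle = \sum_{\substack{\boldsymbol{S}\in\SYT(\bdnu) \\ D(\boldsymbol{S})=\{k,k+1,\dots,n-1\}}} q^{\inv(\boldsymbol{S})},$$
as desired. There is essentially no obstacle in this argument beyond correctly quoting the Egge--Loehr--Warrington results; the main ``work'' is the elementary composition/descent translation in the previous paragraph, and the rest is bookkeeping. The reason this approach does not generalize to arbitrary $\lambda$ is exactly that Theorem \ref{thm:ELW2} collapses to a single nonzero term only for hook shapes, so for non-hook $\lambda$ one would have to sum signed contributions from all flat special rim-hook tableaux of shape $\lambda$, which no longer corresponds to a single descent condition on $\boldsymbol{S}$.
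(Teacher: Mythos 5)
Your argument is correct and is exactly the route the paper takes (the paper merely states that Proposition \ref{prop:hook} follows by applying Theorems \ref{thm:ELW1} and \ref{thm:ELW2} to the expansion \eqref{eqn:LLTinQ}, leaving the details implicit). You have correctly supplied those details: the hook case of Theorem \ref{thm:ELW2} collapses the modified inverse Kostka sum to $\chi(\alpha=\lambda)$, and the unique descent set with $co(D(\boldsymbol{S}))=(k,1^{n-k})$ is $\{k,k+1,\dots,n-1\}$.
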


The Dyck diagram explained in Section \ref{subsec:relation} can be used to compute 
$\langle \LLT_{\boldsymbol{\nu} }({\bf x};q)   , s_{\lambda} \rangle$ in Proposition \ref{prop:hook}. Since $\bdnu$ is an $n$-tuple of single cells, $\boldsymbol{S}\in\SYT (\boldsymbol{\nu} )$ can be considered as a word of length $n$, and 
to satisfy the condition $D(\boldsymbol{S}) = \{ k, k+1,\dots, n-1\}$, the reading words should be in the set of shuffle product of 
$(n-1,n-2,\dots, k+1)$ and $(1,2,\dots, k-1)$ followed by $k$ in the end. 
We denote by $D_k$ the set of all words obtained from such shuffle product.
To compute the inversion statistic, place the reading word in $D_k$ on the main diagonal starting from 
the bottom-left corner of the Dyck diagram and count the number of inversion pairs in this setting. 

\begin{example}\label{ex:hook}
We keep considering the LLT diagram $\bdnu$ in Example \ref{ex:dg}. 
To obtain, for instance, the coefficient of $s_{2111}$, we have to consider the set of reading words $(1\shuffle (5,4,3))2$, i.e., 
\begin{align*}
& 15432\\
&51432\\
&54132\\
&54312.
\end{align*}
We place those reading words on the diagonal of the Dyck diagram and compute the inversion statistic.
$$
\begin{tikzpicture}[scale=.46]
\draw (0,0)--(0,5)--(5,5);
\draw (0,4)--(4,4);
\draw (0,3)--(3,3);
\draw (0,2)--(2,2);
\draw (0,1)--(1,1);
\draw (1,1)--(1,5);
\draw (2,2)--(2,5);
\draw (3,3)--(3,5);
\draw (4,4)--(4,5);
\draw[thick] (0,1)--(0,4)--(1,4)--(1,5)--(4,5)--(4,4)--(3,4)--(3,3)--(2,3)--(2,2)--(1,2)--(1,1)--(0,1);
\node (7) at (.5, 4.5) {$\mathsf{X}$};
\node (8) at (.45, .5) {$1$};
\node (9) at (1.45, 1.5) {$5$};
\node (10) at (2.45, 2.5) {$4$};
\node (11) at (3.45, 3.5) {$3$};
\node (12) at (4.45, 4.5) {$2$};
\node (13) at (1.5,4.5) {$\bigcirc$};
\node (14) at (2.5,4.5) {$\bigcirc$};
\node (15) at (3.5,4.5) {$\bigcirc$};
\node (16) at (.5,3.5) {$\cdot$};
\node (17) at (1.5,3.5) {$\bigcirc$};
\node (18) at (2.5,3.5) {$\bigcirc$};
\node (19) at (.5,2.5) {$\cdot$};
\node (20) at (1.5,2.5) {$\bigcirc$};
\node (21) at (.5,1.5) {$\cdot$};
\node (22) at (4, 1) {$q^6$};
\end{tikzpicture}
\qquad 
\begin{tikzpicture}[scale=.46]
\draw (0,0)--(0,5)--(5,5);
\draw (0,4)--(4,4);
\draw (0,3)--(3,3);
\draw (0,2)--(2,2);
\draw (0,1)--(1,1);
\draw (1,1)--(1,5);
\draw (2,2)--(2,5);
\draw (3,3)--(3,5);
\draw (4,4)--(4,5);
\draw[thick] (0,1)--(0,4)--(1,4)--(1,5)--(4,5)--(4,4)--(3,4)--(3,3)--(2,3)--(2,2)--(1,2)--(1,1)--(0,1);
\node (7) at (.5, 4.5) {$\mathsf{X}$};
\node (8) at (.45, .5) {$5$};
\node (9) at (1.45, 1.5) {$1$};
\node (10) at (2.45, 2.5) {$4$};
\node (11) at (3.45, 3.5) {$3$};
\node (12) at (4.45, 4.5) {$2$};
\node (13) at (1.5,4.5) {$\cdot$};
\node (14) at (2.5,4.5) {$\bigcirc$};
\node (15) at (3.5,4.5) {$\bigcirc$};
\node (16) at (.5,3.5) {$\bigcirc$};
\node (17) at (1.5,3.5) {$\cdot$};
\node (18) at (2.5,3.5) {$\bigcirc$};
\node (19) at (.5,2.5) {$\bigcirc$};
\node (20) at (1.5,2.5) {$\cdot$};
\node (21) at (.5,1.5) {$\bigcirc$};
\node (22) at (4, 1) {$q^6$};
\end{tikzpicture}
\qquad
\begin{tikzpicture}[scale=.46]
\draw (0,0)--(0,5)--(5,5);
\draw (0,4)--(4,4);
\draw (0,3)--(3,3);
\draw (0,2)--(2,2);
\draw (0,1)--(1,1);
\draw (1,1)--(1,5);
\draw (2,2)--(2,5);
\draw (3,3)--(3,5);
\draw (4,4)--(4,5);
\draw[thick] (0,1)--(0,4)--(1,4)--(1,5)--(4,5)--(4,4)--(3,4)--(3,3)--(2,3)--(2,2)--(1,2)--(1,1)--(0,1);
\node (7) at (.5, 4.5) {$\mathsf{X}$};
\node (8) at (.45, .5) {$5$};
\node (9) at (1.45, 1.5) {$4$};
\node (10) at (2.45, 2.5) {$1$};
\node (11) at (3.45, 3.5) {$3$};
\node (12) at (4.45, 4.5) {$2$};
\node (13) at (1.5,4.5) {$\bigcirc$};
\node (14) at (2.5,4.5) {$\cdot$};
\node (15) at (3.5,4.5) {$\bigcirc$};
\node (16) at (.5,3.5) {$\bigcirc$};
\node (17) at (1.5,3.5) {$\bigcirc$};
\node (18) at (2.5,3.5) {$\cdot$};
\node (19) at (.5,2.5) {$\bigcirc$};
\node (20) at (1.5,2.5) {$\bigcirc$};
\node (21) at (.5,1.5) {$\bigcirc$};
\node (22) at (4, 1) {$q^7$};
\end{tikzpicture}
\qquad
\begin{tikzpicture}[scale=.46]
\draw (0,0)--(0,5)--(5,5);
\draw (0,4)--(4,4);
\draw (0,3)--(3,3);
\draw (0,2)--(2,2);
\draw (0,1)--(1,1);
\draw (1,1)--(1,5);
\draw (2,2)--(2,5);
\draw (3,3)--(3,5);
\draw (4,4)--(4,5);
\draw[thick] (0,1)--(0,4)--(1,4)--(1,5)--(4,5)--(4,4)--(3,4)--(3,3)--(2,3)--(2,2)--(1,2)--(1,1)--(0,1);
\node (7) at (.5, 4.5) {$\mathsf{X}$};
\node (8) at (.45, .5) {$5$};
\node (9) at (1.45, 1.5) {$4$};
\node (10) at (2.45, 2.5) {$3$};
\node (11) at (3.45, 3.5) {$1$};
\node (12) at (4.45, 4.5) {$2$};
\node (13) at (1.5,4.5) {$\bigcirc$};
\node (14) at (2.5,4.5) {$\bigcirc$};
\node (15) at (3.5,4.5) {$\cdot$};
\node (16) at (.5,3.5) {$\bigcirc$};
\node (17) at (1.5,3.5) {$\bigcirc$};
\node (18) at (2.5,3.5) {$\bigcirc$};
\node (19) at (.5,2.5) {$\bigcirc$};
\node (20) at (1.5,2.5) {$\bigcirc$};
\node (21) at (.5,1.5) {$\bigcirc$};
\node (22) at (4, 1) {$q^8$};
\end{tikzpicture}
$$
Hence, we obtain 
$$\langle \LLT_{\bdnu}({\bf x};q), s_{2111}\rangle = 2q^6 +q^7+q^8.$$
\end{example}

%

To describe the Schur coefficients in Proposition \ref{prop:hook} in terms of the weight statistic used in Section \ref{sec:LLT_Schur},
we  first recall the descent set of words and Young tableaux, respectively.
For a word $u = u_1u_2 \cdots u_n$,
we say that an index $i \in \{ 1,2,\cdots,n-1 \}$ is a descent of $u$ if $u_i > u_{i+1}$
and denote by $D(u)$ the set of all descents of $u$.
For a standard Young tableau $T$,
we say that $i \in \{ 1,2,\cdots,n-1 \}$ is a descent of $T$ if $i$ appears in a lower row of $T$ than $i+1$
and denote by $D(T)$ the set of all descents of $T$.
Notice that if $u \in D_k$, then $u_n=k$,
for any $1 \leq i <j \leq k-1$ $j$ cannot precede $i$
and for any $k+1 \leq i <j \leq n$ $i$ cannot precede $j$ in $u$.
Hence,
whenever $u \in D_k$ and $i \in D(u)$, $u_i$ should be in $\{k+1,k+2,\cdots,n-1 \}$
and $u_i > u_j$ for all $i < j$.
This implies that
if $u$ is a reading word of $\boldsymbol{S}\in\SYT (\boldsymbol{\nu} )$ such that $D(\boldsymbol{S}) = \{ k, k+1,\dots, n-1\}$,
then $\inv(\bf{S})$ counts the number of cells whose column is labeled by $i$ and row by $j$ in $\pi_{\boldsymbol{\nu}}$ for all $i \in D(u)$ and all $j <i$.
Letting $\boldsymbol{\nu}^t$ be the LLT diagram whose corresponding Dyck diagram is the conjugate of $\pi_{\boldsymbol{\nu}}$ as a skew shape,
we can see that
\begin{equation} \label{LLT_conjugate_hook}
\langle \LLT_{\boldsymbol{\nu}^t }({\bf x};q)   , s_{\lambda}  \rangle
= \sum_{u \in D_k} q^{wt_{\boldsymbol{\nu}}(u)}
\end{equation}
where $wt_{\boldsymbol{\nu}}(u)$ is the sum of $a_i$ in $a_{\boldsymbol{\nu}}$ for all $i \in D(u)$.

On the other hand,
from the definition of the shuffle product
one can see that every word in $D_k$ can be obtained from the word $1 \, 2  \cdots (k-1) \, n \, (n-1) \cdots (k+1) \, k $ by applying the following relation in succession:
replace $xzy$ by $zxy$ or vice versa if $x < y < z$.
This suggests that any two words in $D_k$ are Knuth equivalent
and thus
they result the same insertion tableau in the procedure of Robinson-Shensted-Knuth (RSK) insertion algorithm.
Moreover,
since when $\lambda = (k, 1^{n-k})$ $\SYT(\lambda)$ and $D_k$ are equicardinal,
RSK algorithm guarantees that
$\SYT(\lambda)$ is the set of all recording tableaux $Q(u)$ for $u \in D_k$.
Finally,
by combining \eqref{LLT_conjugate_hook} with well known facts that
$\LLT_{\boldsymbol{\nu}^t }({\bf x};q) = \LLT_{\boldsymbol{\nu} }({\bf x};q)$
and $i \in D(u)$ if and only if $i \in D(Q(u))$,
we can conclude the following proposition.

\begin{prop}\label{prop:hook_unicellular}
Let $\lambda=(k, 1^{n-k})$ be a partition of a hook shape and
$\boldsymbol{\nu}$ a unicellular LLT diagram.
If $a_{\boldsymbol{\nu}} = (a_1, a_2, \cdots, a_n)$ is the area sequence of $\pi_{\boldsymbol{\nu}}$, then
$$\langle \LLT_{\boldsymbol{\nu} }({\bf x};q)   , s_{\lambda} \rangle =
\sum_{T \in \SYT(\lambda)} q^{wt_{\boldsymbol{\nu}}(T)} \, , $$
where $wt_{\boldsymbol{\nu}}(T) = \sum_{i \in D(T)} a_i$.
\end{prop}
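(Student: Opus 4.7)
The plan is to build on Proposition \ref{prop:hook}, which already realizes $\langle \LLT_{\boldsymbol{\nu}}({\bf x};q), s_\lambda\rangle$ as a weighted sum of $q^{\inv(\boldsymbol{S})}$ over standard fillings $\boldsymbol{S}$ whose descent set equals $\{k,k+1,\ldots,n-1\}$. The task is then to rewrite this $\inv$-weighted sum over fillings as a $wt_{\boldsymbol{\nu}}$-weighted sum over $\SYT(\lambda)$. I would do this in three steps, using the content reading word as a bridge.

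First, I identify each such $\boldsymbol{S}$ with its content reading word $u$ and let $D_k$ denote the resulting set of words. From the structural description already developed in the excerpt, $D_k$ consists of all shuffles of $1\,2\cdots(k-1)$ with $(n-1)(n-2)\cdots(k+1)$, each terminated by the letter $k$. Thus for $u \in D_k$, the descent set $D(u)$ is precisely the set of positions occupied by letters from $\{k+1,\ldots,n-1\}$, and whenever $i \in D(u)$ every subsequent entry $u_j$ ($j>i$) satisfies $u_j < u_i$.

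Second, I translate $\inv(\boldsymbol{S})$ into the desired weight on the area sequence. By the inversion condition from Section \ref{subsec:relation}, together with the structural property of words in $D_k$ noted above, $\inv(\boldsymbol{S})$ counts exactly those cells of the Dyck diagram $\pi_{\boldsymbol{\nu}}$ whose column label $i$ lies in $D(u)$. Passing to the conjugate Dyck diagram $\pi_{\boldsymbol{\nu}^t}$ converts this column-count into a row-count and produces exactly $\sum_{i\in D(u)} a_i = wt_{\boldsymbol{\nu}}(u)$. Invoking the transpose symmetry $\LLT_{\boldsymbol{\nu}^t}({\bf x};q) = \LLT_{\boldsymbol{\nu}}({\bf x};q)$ then yields
$$\langle \LLT_{\boldsymbol{\nu}}({\bf x};q), s_\lambda \rangle = \sum_{u \in D_k} q^{wt_{\boldsymbol{\nu}}(u)}.$$

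Third, I pass from $D_k$ to $\SYT(\lambda)$ via RSK. Any two words in $D_k$ differ by a sequence of elementary Knuth moves $xzy \leftrightarrow zxy$ (with $x<y<z$), so they are all Knuth equivalent and share one common insertion tableau $P$; hence $u \mapsto Q(u)$ is injective on $D_k$. A cardinality check $|D_k| = \binom{n-1}{k-1} = |\SYT((k,1^{n-k}))|$ forces the shape of $Q(u)$ to be $\lambda$ and the map to be a bijection $D_k \leftrightarrow \SYT(\lambda)$. Combined with the classical descent-preservation $D(u)=D(Q(u))$, this transports the weight $wt_{\boldsymbol{\nu}}(u)$ unchanged to $wt_{\boldsymbol{\nu}}(T)$ for $T=Q(u) \in \SYT(\lambda)$, which closes the argument.

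The main obstacle is the identification in the second step: one must verify carefully that, under the specific constraints on descents of words in $D_k$, the cell-by-cell contributions to $\inv(\boldsymbol{S})$ match the area-sequence sum $wt_{\boldsymbol{\nu}}(u)$ after conjugating the Dyck diagram. The rest of the argument is standard once one accepts Proposition \ref{prop:hook}, the transpose invariance of unicellular LLT polynomials, and the descent-preservation property of RSK.
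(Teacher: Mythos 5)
Your proposal is correct and follows essentially the same route as the paper: reduce to Proposition \ref{prop:hook}, identify the admissible fillings with the shuffle set $D_k$, convert $\inv$ into the area-sequence weight via the conjugate Dyck diagram together with the transpose invariance $\LLT_{\boldsymbol{\nu}^t}({\bf x};q)=\LLT_{\boldsymbol{\nu}}({\bf x};q)$, and then transport the weight to $\SYT(\lambda)$ through RSK using Knuth equivalence of the words in $D_k$, the cardinality count, and descent preservation. The only quibble is a harmless off-by-one inherited from the surrounding text: the decreasing factor of the shuffle should be $n\,(n-1)\cdots(k+1)$, so the letters sitting at descent positions of $u\in D_k$ range over $\{k+1,\dots,n\}$ rather than $\{k+1,\dots,n-1\}$.
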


\begin{example} \label{exam:hook_with_descent}
We keep considering the LLT diagram $\bdnu$
and the hook partition $\lambda$ in Example \ref{ex:hook}.
Then we have the area sequence $a_{\bdnu} = (3,3,2,1,0)$
and the following four standard Young tableaux of shape $(2,1,1,1)$
\begin{center}
\begin{tikzpicture}[scale=.48]
\draw[thick] (0,0)--(2,0);
\draw[thick] (0,0)--(0,4);
\draw[thick] (0,1)--(2,1);
\draw[thick] (0,2)--(1,2);
\draw[thick] (0,3)--(1,3);
\draw[thick] (0,4)--(1,4);
\draw[thick] (1,0)--(1,4);
\draw[thick] (2,0)--(2,1);
\node (1) at (.45, .5) {$1$};
\node (2) at (1.45,.5) {$2$};
\node (6) at (1.45,.5) {$\bigcirc$};
\node (3) at (.45, 1.5) {$3$};
\node (7) at (.45, 1.5) {$\bigcirc$};
\node (4) at (.45, 2.5) {$4$};
\node (8) at (.45, 2.5) {$\bigcirc$};
\node (5) at (.45, 3.5) {$5$};
\end{tikzpicture} \qquad
\begin{tikzpicture}[scale=.48]
\draw[thick] (0,0)--(2,0);
\draw[thick] (0,0)--(0,4);
\draw[thick] (0,1)--(2,1);
\draw[thick] (0,2)--(1,2);
\draw[thick] (0,3)--(1,3);
\draw[thick] (0,4)--(1,4);
\draw[thick] (1,0)--(1,4);
\draw[thick] (2,0)--(2,1);
\node (1) at (.45, .5) {$1$};
\node (8) at (.45, .5) {$\bigcirc$};
\node (2) at (1.45,.5) {$3$};
\node (6) at (1.45,.5) {$\bigcirc$};
\node (3) at (.45, 1.5) {$2$};
\node (4) at (.45, 2.5) {$4$};
\node (8) at (.45, 2.5) {$\bigcirc$};
\node (5) at (.45, 3.5) {$5$};
\end{tikzpicture} \qquad
\begin{tikzpicture}[scale=.48]
\draw[thick] (0,0)--(2,0);
\draw[thick] (0,0)--(0,4);
\draw[thick] (0,1)--(2,1);
\draw[thick] (0,2)--(1,2);
\draw[thick] (0,3)--(1,3);
\draw[thick] (0,4)--(1,4);
\draw[thick] (1,0)--(1,4);
\draw[thick] (2,0)--(2,1);
\node (1) at (.45, .5) {$1$};
\node (8) at (.45, .5) {$\bigcirc$};
\node (2) at (1.45,.5) {$4$};
\node (6) at (1.45,.5) {$\bigcirc$};
\node (3) at (.45, 1.5) {$2$};
\node (7) at (.45, 1.5) {$\bigcirc$};
\node (4) at (.45, 2.5) {$3$};
\node (5) at (.45, 3.5) {$5$};
\end{tikzpicture} \qquad
\begin{tikzpicture}[scale=.48]
\draw[thick] (0,0)--(2,0);
\draw[thick] (0,0)--(0,4);
\draw[thick] (0,1)--(2,1);
\draw[thick] (0,2)--(1,2);
\draw[thick] (0,3)--(1,3);
\draw[thick] (0,4)--(1,4);
\draw[thick] (1,0)--(1,4);
\draw[thick] (2,0)--(2,1);
\node (1) at (.45, .5) {$1$};
\node (8) at (.45, .5) {$\bigcirc$};
\node (2) at (1.45,.5) {$5$};
\node (3) at (.45, 1.5) {$2$};
\node (7) at (.45, 1.5) {$\bigcirc$};
\node (4) at (.45, 2.5) {$3$};
\node (8) at (.45, 2.5) {$\bigcirc$};
\node (5) at (.45, 3.5) {$4$};
\end{tikzpicture}
\end{center}
with its $\boldsymbol{\nu}$-weight 6,6,7 and 8, respectively.
Once again, we obtain
$$\langle \LLT_{\bdnu}({\bf x};q), s_{2111}({\bf x})\rangle = 2q^6 +q^7+q^8.$$
\end{example}



\section*{Acknowledgements}\label{sec:acknow} 
The authors would like to thank Soojin Cho for her kind support and encouragement.


\vspace{3mm}

\end{document}